\theoremstyle{plain}
\newtheorem{thm}{Theorem}[section]
\newtheorem{conj}[thm]{Conjecture}
\newtheorem{lem}[thm]{Lemma}
\newtheorem{prop}[thm]{Proposition}
\newtheorem{coro}[thm]{Corollary}
\newtheorem{defi}[thm]{Definition}
\newtheorem{rem}[thm]{Remark}
\numberwithin{equation}{section}
\newcommand{\al}{\alpha}
\newcommand{\si}{\sigma}
\newcommand{\om}{\omega}
\newcommand{\cA}{\mathcal{A}}
\newcommand{\cC}{\mathcal{C}}
\newcommand{\bC}{\mathbb{C}}
\newcommand{\fS}{\mathfrak{S}}
\newcommand{\cH}[1]{H_{\alpha_1,\alpha_2,#1}}
\newcommand{\fH}{H_{k,n}}
\newcommand{\cosets}{\fS_k\backslash\fS_{k+n}/\fS_k}
\newcommand{\avB}{B_n(\bar{1}\bar{2})}
\newcommand{\rmspan}{\mathrm{span}}
\newcommand{\diag}[3]{ \foreach \t in {1,...,#3} {\draw[thick] (#1+\t,#2-1) rectangle (#1+\t-1,#2);} }
\newcommand{\mdiagp}[3]{ 
\foreach \t in {1,...,#3} {\filldraw[thick, fill=lightgray, draw=black] (#1+\t,#2-1) rectangle (#1+\t-1,#2);}}
\newcommand{\mdiag}[3]{ 
\foreach \t in {1,...,3} {\filldraw[thick, fill=lightgray, draw=black] (#1+\t,#2-1) rectangle (#1+\t-1,#2);}
\foreach \t in {4,...,#3} {\draw[thick] (#1+\t,#2-1) rectangle (#1+\t-1,#2);}}
\newcommand{\diagg}[4]{ \foreach \t in {1,...,#3} {\draw[thick] (#1+\t,#2-1) rectangle (#1+\t-1,#2);} \foreach \t in {1,...,#4} {\draw[thick] (#1+\t,#2-1) rectangle (#1+\t-1,#2-2);} }
\newcommand{\mdiaggp}[4]{ \foreach \t in {1,...,#3} {\filldraw[thick, fill=lightgray, draw=black] (#1+\t,#2-1) rectangle (#1+\t-1,#2);}
\foreach \t in {1,...,#4} {\draw[thick] (#1+\t,#2-1) rectangle (#1+\t-1,#2-2);} }
\newcommand{\mdiagg}[4]{ \foreach \t in {1,...,3} {\filldraw[thick, fill=lightgray, draw=black] (#1+\t,#2-1) rectangle (#1+\t-1,#2);}
\foreach \t in {4,...,#3} {\draw[thick] (#1+\t,#2-1) rectangle (#1+\t-1,#2);}
\foreach \t in {1,...,#4} {\draw[thick] (#1+\t,#2-1) rectangle (#1+\t-1,#2-2);} }
\newcommand{\diaggg}[5]{ \foreach \t in {1,...,#3} {\draw[thick] (#1+\t,#2-1) rectangle (#1+\t-1,#2);} \foreach \t in {1,...,#4} {\draw[thick] (#1+\t,#2-1) rectangle (#1+\t-1,#2-2);}
                         \foreach \t in {1,...,#5} {\draw[thick] (#1+\t,#2-2) rectangle (#1+\t-1,#2-3);} }
\newcommand{\mdiagggp}[5]{ \foreach \t in {1,...,#3} {\filldraw[thick, fill=lightgray, draw=black] (#1+\t,#2-1) rectangle (#1+\t-1,#2);}
\foreach \t in {1,...,#4} {\draw[thick] (#1+\t,#2-1) rectangle (#1+\t-1,#2-2);}
                         \foreach \t in {1,...,#5} {\draw[thick] (#1+\t,#2-2) rectangle (#1+\t-1,#2-3);} }                        
\newcommand{\mdiaggg}[5]{ \foreach \t in {1,...,3} {\filldraw[thick, fill=lightgray, draw=black] (#1+\t,#2-1) rectangle (#1+\t-1,#2);}
\foreach \t in {4,...,#3} {\draw[thick] (#1+\t,#2-1) rectangle (#1+\t-1,#2);}
\foreach \t in {1,...,#4} {\draw[thick] (#1+\t,#2-1) rectangle (#1+\t-1,#2-2);}
                         \foreach \t in {1,...,#5} {\draw[thick] (#1+\t,#2-2) rectangle (#1+\t-1,#2-3);} }
\newcommand{\mdiaggggp}[6]{ \foreach \t in {1,...,#3} {\filldraw[thick, fill=lightgray, draw=black] (#1+\t,#2-1) rectangle (#1+\t-1,#2);}
\foreach \t in {1,...,#4} {\draw[thick] (#1+\t,#2-1) rectangle (#1+\t-1,#2-2);}
                         \foreach \t in {1,...,#5} {\draw[thick] (#1+\t,#2-2) rectangle (#1+\t-1,#2-3);} \foreach \t in {1,...,#6} {\draw[thick] (#1+\t,#2-3) rectangle (#1+\t-1,#2-4);} }
\def\eseq{-10} 
\newcommand{\strand}[2]{
	\fill (#1,#2) circle (0.2);
	\draw[thick] (#1,#2) -- ++(0,-4);
	\fill (#1,#2-4) circle (0.2);
} 
\newcommand{\slab}[2]{
	\node at #1 {\scriptsize #2};
} 
\newcommand{\scirc}[2]{
	\fill (#1,#2) circle (0.2);
} 
\newcommand{\ocross}[2]{
\scirc{#1}{#2}
\scirc{#1}{#2-4}
\scirc{#1+4}{#2}
\scirc{#1+4}{#2-4}
\draw[thick] (#1+4,#2)..controls +(0,-2) and +(0,+2) .. (#1,#2-4);
\fill[white] (#1+2,#2-2) circle (0.4);
\draw[thick] (#1,#2)..controls +(0,-2) and +(0,+2) .. (#1+4,#2-4);
} 
\newcommand{\ellk}[2]{
	\draw[white,fill=lightgray] (#1-1,#2-2) rectangle (#1+1,#2+2);
	\fill (#1,#2+2) ellipse (1.4cm and 0.2cm);
	\fill (#1,#2-2) ellipse (1.4cm and 0.2cm);
	\draw[thick] (#1-1,#2+2) -- (#1-1,#2-2);
	\slab{(#1,#2)}{$k$}
} 
\newcommand{\ellstrand}[2]{
	\draw[thick] (#1+1,#2+2) -- (#1+1,#2-2);
} 
\newcommand{\ellocross}[2]{
	\draw[thick] (#1+4,#2+2)..controls +(0,-2) and +(0,+2) .. (#1+1,#2-2);
	\fill[white] (#1+2.5,#2) circle (0.3);
	\draw[thick] (#1+1,#2+2)..controls +(0,-2) and +(0,+2) .. (#1+4,#2-2);
	\fill (#1+4,#2+2) circle (0.2);
	\fill (#1+4,#2-2) circle (0.2);
} 
\newcommand{\encell}[2]{
	\fill (#1,#2) circle (0.2);	
			\draw[white,fill=white] (#1-4-1.1,#2-2-1) rectangle (#1-4+1.1,#2-2-0.6);		
			\draw[thick] (#1,#2) arc (0:-95:1.2) -- (#1-4+1.4,#2-2+0.8) (#1-4-1.4,#2-2+0.8) arc (90:270:0.8) -- (#1-4+2.8,#2-2-0.8) arc (90:0:1.2);
	\fill (#1,#2-4) circle (0.2);
} 
\newcommand{\encells}[2]{
	\fill (#1,#2) circle (0.2);	
	\draw[white,fill=white] (#1-2*4-1.1,#2-2-1) rectangle (#1+1.1,#2-2-0.6);		
	\draw[thick] (#1,#2) arc (0:-95:1.2) -- (#1-4+0.3,#2-2+0.8)  (#1-4-0.3,#2-2+0.8) --  (#1-2*4+1.4,#2-2+0.8) (#1-2*4-1.4,#2-2+0.8) arc (90:270:0.8) -- (#1-4+2.8,#2-2-0.8) arc (90:0:1.2);
	\fill (#1,#2-4) circle (0.2);
} 
\newcommand{\encellsu}[2]{
	\fill (#1,#2) circle (0.2);	
	\draw[white,fill=white] (#1-2*4-1.1,#2-2-1-0.4) rectangle (#1+1.1,#2-2-1);		
	\draw[thick] (#1,#2) arc (0:-95:0.8) -- (#1-4+0.2,#2-2+1.2)  (#1-4-0.8,#2-2+1.2) --  (#1-2*4+1.4,#2-2+1.2) (#1-2*4-1.4,#2-2+1.2) arc (90:270:1.2) -- (#1-4+3.2,#2-2-1.2) arc (90:0:0.8);
	\fill (#1,#2-4) circle (0.2);
} 
\newcommand{\ellTii}[2]{
	\draw[lightgray,fill=lightgray] (#1-1,#2-2) rectangle (#1-0.5,#2+2);
	\fill (#1,#2+2) ellipse (1.4cm and 0.2cm);
	\fill (#1,#2-2) ellipse (1.4cm and 0.2cm);
	\draw[thick] (#1-1,#2+2) -- (#1-1,#2-2);
	\draw[thick] (#1-0.5,#2+2) -- (#1-0.5,#2-2);
	
	\slab{(#1+4,#2+3)}{$1$}
	\slab{(#1+8,#2+3)}{$i-1$}
	\slab{(#1+12,#2+3)}{$i$}
	\slab{(#1+16,#2+3)}{$i+1$}
	\fill (#1+4,#2+2) circle (0.2);
	\fill (#1+4,#2-2) circle (0.2);
	\fill (#1+8,#2+2) circle (0.2);
	\fill (#1+8,#2-2) circle (0.2);
	\fill (#1+12,#2+2) circle (0.2);
	\fill (#1+12,#2-2) circle (0.2);
	\fill (#1+16,#2+2) circle (0.2);
	\fill (#1+16,#2-2) circle (0.2);

	\draw[thick] (#1+4,#2+2)..controls +(0,-2) and +(0,+2) .. (#1+1,#2-2);
	\draw[thick] (#1+8,#2+2)..controls +(0,-2) and +(0,+2) .. (#1+4,#2-2);
	\draw[thick] (#1+12,#2+2)..controls +(0,-2) and +(0,+2) .. (#1+8,#2-2);
	\draw[thick] (#1+16,#2+2)..controls +(0,-2) and +(0,+2) .. (#1+12,#2-2);
	
	\fill[white] (#1+3.3,#2+0.6) circle (0.25);
	\draw[white,fill=white,rounded corners] (#1+5,#2-0.6) rectangle (#1+7.4,#2+1);
	\fill[white] (#1+11.1,#2+0.6) circle (0.25);
	\fill[white] (#1+14.65,#2+0.35) circle (0.25);
	
	\draw[thick] (#1+1,#2+2)..controls +(0,-1) and +(-1,0) .. (#1+2.5,#2+0.6) -- (#1+13,#2+0.6) ..controls +(2,0) and +(0,2) .. (#1+16,#2-2);
} 
\newcommand{\ellU}[2]{
	\draw[lightgray,fill=lightgray] (#1-1,#2-2) rectangle (#1-0.5,#2+2);
	\fill (#1,#2+2) ellipse (1.4cm and 0.2cm);
	\fill (#1,#2-2) ellipse (1.4cm and 0.2cm);
	\draw[thick] (#1-1,#2+2) -- (#1-1,#2-2);
	\draw[thick] (#1-0.5,#2+2) -- (#1-0.5,#2-2);
	
	\fill (#1+12,#2+2) circle (0.2);
	\fill (#1+12,#2-2) circle (0.2);
	\fill (#1+8,#2+2) circle (0.2);
	\fill (#1+8,#2-2) circle (0.2);
	
	\draw[thick] (#1+1,#2-2)..controls +(0,1) and +(-1,0) .. (#1+2.5,#2-1.2) -- (#1+3.6,#2-1.2) (#1+4.15,#2-1.2) -- (#1+7.8,#2-1.2) (#1+8.2,#2-1.2) --  (#1+9,#2-1.2) ..controls +(2.5,0) and +(0,-2.5) .. (#1+12,#2+2);
	\draw[thick] (#1+0.5,#2-2)..controls +(0,1.2) and +(-1.2,0) .. (#1+2.5,#2-0.6) -- (#1+3.1,#2-0.6) (#1+3.8,#2-0.6) -- (#1+5,#2-0.6) ..controls +(2.5,0) and +(0,-2.5) .. (#1+8,#2+2);
	\draw[thick] (#1,#2-2)..controls +(0,3) and +(0,-3)  .. (#1+4,#2+2);
	
	\fill[white] (#1+3.8,#2+1.2) circle (0.2);
	\fill[white] (#1+8,#2+1.2) circle (0.2);
	
	\fill[white] (#1+3.45,#2+0.6) circle (0.25);
	
	\fill[white] (#1+11.8,#2) circle (0.3);
	\fill[white] (#1+7.7,#2) circle (0.3);
	\fill[white] (#1+2,#2) circle (0.3);
	
	\draw[thick] (#1+1,#2+2)..controls +(0,-1) and +(-1,0) .. (#1+2.5,#2+1.2) -- (#1+9,#2+1.2) ..controls +(2.5,0) and +(0,2.5) .. (#1+12,#2-2);
	\draw[thick] (#1+0.5,#2+2)..controls +(0,-1.2) and +(-1.2,0) .. (#1+2.5,#2+0.6) -- (#1+5,#2+0.6) ..controls +(2.5,0) and +(0,2.5) .. (#1+8,#2-2);
	\draw[thick] (#1,#2+2)..controls +(0,-3) and +(0,3) .. (#1+4,#2-2);
	\fill (#1+4,#2+2) circle (0.2);
\fill (#1+4,#2-2) circle (0.2);
} 
\title{\bf Fused Hecke algebra and one-boundary algebras}
\renewcommand*{\Affilfont}{\normalsize\small}
\author[1]{Lo\"ic Poulain d'Andecy}
\author[2]{Meri Zaimi\vspace{.5em}}
\affil[1]{Laboratoire de math\'ematiques de Reims UMR 9008, Universit\'e de Reims Champagne-Ardenne,  \newline\vspace{.9em} 
Moulin de la Housse BP 1039, 51100 Reims, France.}
\affil[2]{Centre de Recherches Math\'ematiques, Universit\'e de Montr\'eal, \newline\vspace{.9em}
P.O. Box 6128, Centre-ville Station, Montr\'eal (Qu\'ebec), H3C 3J7, Canada.}
	\renewcommand\AB@affilsepx{: \protect\Affilfont}
	\affil[ ]{E-mail addresses}
	\renewcommand\AB@affilsepx{, \protect\Affilfont}
	\affil[1]{loic.poulain-dandecy@univ-reims.fr}
	\affil[2]{meri.zaimi@umontreal.ca}
\begin{document}

\date{} 
\maketitle

\begin{abstract}
This paper gives an algebraic presentation of the fused Hecke algebra which describes the centraliser of tensor products of the $U_q(gl_N)$-representation labelled by a one-row partition of any size with vector representations. It is obtained through a detailed study of a new algebra that we call the symmetric one-boundary Hecke algebra. In particular, we prove that the symmetric one-boundary Hecke algebra is free over a ring of Laurent polynomials in three variables and we provide a basis indexed by a certain subset of signed permutations. We show how the symmetric one-boundary Hecke algebra admits the one-boundary Temperley--Lieb algebra as a quotient, and we also describe a basis of this latter algebra combinatorially in terms of signed permutations with avoiding patterns. The quotients corresponding to any value of $N$ in $gl_N$ (the Temperley--Lieb one corresponds to $N=2$) are also introduced. Finally, we obtain the fused Hecke algebra, and in turn the centralisers for any value of $N$, by specialising and quotienting the symmetric one-boundary Hecke algebra. In particular, this generalises to the Hecke case the description of the so-called boundary seam algebra, which is then obtained (taking $N=2$) as a quotient of the fused Hecke algebra.
\end{abstract}

\section{Introduction}

The usual Hecke algebra $H_n(q)$ appears in the quantum Schur--Weyl duality \cite{Jimbo} describing the centralisers of tensor powers of the vector representation of the quantum group $U_q(gl_N)$. If we denote $L^N$ the vector representation of dimension $N$ of $U_q(gl_N)$, there is a surjective morphism:
\[H_n\to\text{End}_{U_q(gl_N)}((L^N)^{\otimes n})\ .\]
The Hecke algebra $H_n$ does not depend on $N$ and plays its role for $gl_N$ for any $N$. The dependence on $N$ of the centraliser appears in the description of the kernel of the above map. Indeed, for a given $N$, the centraliser of $U_q(gl_N)$ is isomorphic to the quotient of $H_n$ by the $q$-antisymmetriser on $N+1$ points. This $q$-antisymmetriser is a minimal central idempotent of $H_{N+1}$ (the quotient is trivial if $n\leq N$) and generates the kernel for any $n\geq N+1$. In particular, for $N=2$, the resulting algebra is the well-known Temperley--Lieb algebra. 

We would like to generalise the whole picture for tensor products of more general representations of $U_q(gl_N)$. The fused Hecke algebra was introduced in \cite{CP} for this purpose. For $\vec{k}=(k_1,\dots,k_n)$ a vector of positive integers, we have a surjective morphism:
\begin{equation}\label{cent-intro-gen}
H_{\vec{k},n}\to\text{End}_{U_q(gl_N)}(L^N_{(k_1)}\otimes L^N_{(k_2)}\otimes \dots \otimes L_{(k_n)}^N)\ ,
\end{equation}
where $L_{(k)}^N$ is the $k$-th $q$-symmetrised power of $L^N$ (in other words, the irreducible representation of $U_q(gl_N)$ indexed by the one-row partition of size $k$) and $H_{\vec{k},n}$ is called the fused Hecke algebra. Again, the algebra $H_{\vec{k},n}$ does not depend on $N$ and for large $N$ is exactly the centraliser. The dependence of the centralisers on $N$ appears in the kernel of the surjective map, of which an explicit description is conjectured in \cite{CP} and proved in some cases, including the ones we will study in this paper. For $N=2$, the centralisers can be called the fused Temperley--Lieb algebras in our terminology. They appear in several recent works and are also known as seam algebras, valenced Temperley--Lieb algebra or Jones--Wenzl algebras \cite{ALZ,FP,Spe}.

There is no known presentation by generators and relations for the fused Hecke algebra $H_{\vec{k},n}$ in general and in turn no known presentation for the $U_q(gl_N)$-centralisers (see \cite{FP} for a study of this question for the Jones--Wenzl algebras, that is, for $U_q(gl_2)$-centralisers),

\medskip
This paper is concerned with the case where only the first representation in \eqref{cent-intro-gen} is fused. Namely we fix $k>0$ and we denote by $H_{k,n}$ the fused Hecke algebra corresponding to the following centraliser:
\begin{equation}\label{cent-intro}
H_{k,n}\to\text{End}_{U_q(gl_N)}(L^N_{(k)}\otimes (L^N)^{\otimes n})\ .
\end{equation}
This situation is commonly referred to as the one-boundary case. Such one-boundary centraliser algebras have been studied especially for $U_q(gl_2)$, and also often with an infinite-dimensional module (like a Verma module) as the first factor; see \cite{CGS,ILZ,LV} for recent works.  For $N=2$, the one-boundary case of the fused Temperley--Lieb algebra is referred to as the boundary seam algebra \cite{LS,L-L,MRR} and is a quotient of the one-boundary Temperley--Lieb algebra or blob algebra \cite{tD,MS,NRdG}. The presentation given in \cite{MRR}, even if not explicitly stated this way, can be seen as a description of the centraliser \eqref{cent-intro} in the $gl_2$ case. 

\medskip
The first main goal of this paper is to give an algebraic presentation of the fused Hecke algebra $H_{k,n}$ and of its quotients corresponding to the centralisers for any $N$. In particular, we obtain the boundary seam algebra ($N=2$) explicitly as a quotient of the fused Hecke algebra $H_{k,n}$, and provide its generalisation for any $N>2$.

The second main purpose of this paper is the introduction of a new algebra, which we denote $\cA_n$ and call the symmetric one-boundary Hecke algebra. Roughly speaking, this 3-parameter algebra $\cA_n$ allows to interpolate between all algebras $H_{k,n}$ for varying $k$. The word \emph{symmetric} is meant to recall the fact that the representations allowed at the boundary are the $q$-symmetric powers. The algebra $\cA_n$ admits as a quotient the 3-parameter one-boundary Temperley--Lieb algebra, which we denote here $\cC_{n,2}$, and we also define naturally as quotients of $\cA_n$ the generalisations $\cC_{n,N}$ corresponding to $gl_N$ for any $N>2$.

\medskip
We now describe more precisely, step by step, the algebras involved in the paper. It is well-known, see for example \cite{OR}, that the one-boundary centraliser in \eqref{cent-intro} is a quotient of the affine Hecke algebra. Moreover, since the partition $(k)$ made of a line of $k$ boxes has only two addable nodes, this quotient factors through a cyclotomic quotient of level 2, in other words through a Hecke algebra of type B. This is the starting point of our constructions.

\paragraph{The starting point $\cH{n}$.} We start with the type B Hecke algebra $\cH{n}$ defined over the ring $\bC[q^{\pm1},\alpha_1^{\pm1},\alpha_2^{\pm2}]$ with three indeterminates. The indeterminates $\alpha_1$ and $\alpha_2$ correspond to the eigenvalues of the boundary, or type B, generator, while the eigenvalues of the other generators are $q$ and $-q^{-1}$.

The algebra $\cH{n}$ has a standard basis indexed by the signed permutations and we have a good understanding of the representation theory over the field of fractions $\bC(q,\alpha_1,\alpha_2)$. Namely, the algebra is semisimple and the irreducible representations are indexed by bipartitions of $n$. Among these irreducible representations, four of them are of dimension 1 and they correspond to the following bipartitions:
\[\begin{array}{lllllll}
(\Box\dots\Box\,,\,\emptyset) &\quad& (\begin{array}{l}\Box\\[-0.8em] \vdots\\[-0.4em] \Box\end{array}\,,\,\emptyset) & \quad &
(\emptyset\,,\,\Box\dots\Box) &\quad& (\emptyset\,,\,\begin{array}{l}\Box\\[-0.8em] \vdots\\[-0.4em] \Box\end{array})\\

(q,\alpha_1) && (-q^{-1},\alpha_1) && (q,\alpha_2) && (-q^{-1},\alpha_2)
\end{array}\ \ \ \ \ .\]
Each of these one-dimensional representations corresponds to a choice of eigenvalues for the generators, as indicated above. Moreover, explicit expressions for the minimal central idempotents corresponding to these representations are known (see Section \ref{subsec-idem}). These idempotents live in the algebra $\cH{n}$ extended over the field of fractions $\bC(q,\alpha_1,\alpha_2)$, and by simply removing the denominators in these explicit expressions, we obtain central quasi-idempotents in the algebra $\cH{n}$ well-defined over the ring of polynomials. These quasi-idempotents are denoted $E_n^{(x,y)}$, where $(x,y)$ are the corresponding eigenvalues, and will be crucial to all our constructions. Indeed all algebras involved in the paper are obtained by quotienting by some of these quasi-idempotents, as summarised in the following picture:
\begin{center}
\begin{tikzpicture}[scale=1]
\node at (-1.5,0) {$\cH{n}$};

\draw[->] (-1.5,-0.4) -- (-1.5,-1.8);

\node at (1.5,-1) {\footnotesize $\left(\begin{array}{l}\alpha_1=q^{-2}\\ \alpha_2=q^{2k}\end{array}\right)$};
\node at (-2.5,-1) {\footnotesize $E_2^{(-q^{-1},\alpha_2)}$};

\node at (-1.5,-2.2) {$\cA_n$};
\node at (1.3,-2.2) {$\cA_n^{(k)}$};
\node at (-1.5,-4.4) {$\cC_{n,N}$};
\node at (1.3,-4.4) {$\cC_{n,N}^{(k)}$};

\draw[->] (-1.5,-2.6) -- (-1.5,-4);
\node at (-2.5,-3.2) {\footnotesize $E_N^{(-q^{-1},\alpha_1)}$};

\draw[->] (1.3,-2.6) -- (1.3,-4);
\node at (2.3,-3.2) {\footnotesize $E_N^{(-q^{-1},\alpha_1)}$};

\draw[->,densely dashed] (-1,-2.2) -- (0.8,-2.2);
\node at (-0.1,-1.8) {\footnotesize $E_{k+1}^{(q,\alpha_1)}$};

\draw[->,densely dashed] (-1,-4.4) -- (0.8,-4.4);
\node at (-0.1,-4.8) {\footnotesize $E_{k+1}^{(q,\alpha_1)}$};
\end{tikzpicture}
\end{center}
The full lines represent genuine quotients, while dashed lines represent quotients combined with a specialisation of the parameters $\alpha_1,\alpha_2$ as indicated in the diagram. We briefly detail each step of the diagram.

\paragraph{The algebra $\cA_n$.} The symmetric one-boundary Hecke algebra $\cA_n$ is obtained from $\cH{n}$ by quotienting out the quasi-idempotent $E_2^{(-q^{-1},\alpha_2)}$. This quotient is the main object of study of Section \ref{sec-A}. Quite naturally from its definition, the irreducible representations of the algebra $\cA_n$ over the field of fractions are indexed by bipartitions with a one-row partition as the second component. Our first main result is that the algebra $\cA_n$ is free over $\bC[q^{\pm1},\alpha_1^{\pm1},\alpha_2^{\pm2}]$ and we provide a basis in terms of signed permutations with avoiding patterns. We conclude this section with a technical fact, namely, that some of the remaining central quasi-idempotents $E_n^{(x,y)}$ can be renormalised in $\cA_n$ while still being well-defined over $\bC[q^{\pm1},\alpha_1^{\pm1},\alpha_2^{\pm2}]$. This will be important for what follows in order for the subsequently defined quotients to behave well.

\paragraph{The algebras $\cC_{n,N}$.} Then in Section \ref{sec:C}, for $N>1$, the symmetric one-boundary $N$-centraliser algebras $\cC_{n,N}$ are defined by further quotienting $\cA_n$ by one of the remaining (and renormalised) quasi-idempotent when $n \geq N$ (as well as the usual $q$-antisymmetriser on $N+1$ points). The relevant quasi-idempotent is indicated in the diagram above and this definition leads easily to the description of the representation theory over the field of fractions: the irreducible representations are now indexed by bipartitions $(\lambda,\mu)$ where $\mu$ is a one-row partition and $\lambda$ has strictly less than $N$ rows. 

The algebras $\cC_{n,N}$ include the one-boundary Temperley--Lieb algebra, which is the case $N=2$, and provide its natural generalisation for general $N$. The name comes from the fact that the algebra $\cC_{n,N}$ somehow interpolates the $U_q(gl_N)$-centralisers in \eqref{cent-intro} for fixed $N$ and varying $k$. The case $N=2$ is examined in more details and again, we show that $\cC_{n,2}$ is free over the ring $\bC[q^{\pm1},\alpha_1^{\pm1},\alpha_2^{\pm2}]$ with a basis also given in terms of signed permutations with certain avoiding patterns (this is where the renormalisation mentioned above is important). This description can be seen as a one-boundary generalisation of the description of the usual Temperley--Lieb algebra in terms of usual permutations with avoiding patterns.

\paragraph{The algebras $\cA_n^{(k)}$.} Section \ref{sec:fusedH} is mainly devoted to the algebraic description of the fused Hecke algebra $\fH$. For this purpose, we define the algebra $\cA_n^{(k)}$ as a specialisation of $\cA_n$ followed by a quotient by another (renormalized) quasi-idempotent for $n\geq k+1$. The specialisation replaces $\alpha_1$ and $\alpha_2$ by the eigenvalues of the boundary generator in the fused Hecke algebra. Their values are indicated in the diagram above together with the relevant quasi-idempotent. The main result of the section is that the algebras $\cA_n^{(k)}$ and $\fH$ are isomorphic. This leads to a presentation of the fused Hecke algebra in terms of generators and relations. Again, a basis of $\cA_n^{(k)}$ in terms of signed permutations with avoiding patterns is provided.    

\paragraph{The algebras $\cC_{n,N}^{(k)}$.} Lastly, in Section \ref{sec-cent}, the algebras $\cC_{n,N}^{(k)}$ are defined by naturally completing the square of the picture above. Namely, they are defined either as specialisations and quotients of $\cC_{n,N}$, or equivalently as quotients of $\cA_n^{(k)}$. The algebras $\cC_{n,N}^{(k)}$ are shown to be isomorphic to the $U_q(gl_N)$-centraliser in \eqref{cent-intro} and this provides an algebraic description of the centraliser. We show that we have reobtained naturally with $\cC_{n,2}^{(k)}$ the boundary seam algebra of \cite{MRR} and thus the algebras $\cC_{n,N}^{(k)}$ can be seen as the $gl_N$-generalisations. Following our results, a natural definition of the algebra $\cC_{n,2}^{(k)}$ over $\mathbb{C}[q^{\pm 1}]$ is given (and here we differ from \cite{MRR}) and it is shown to be free over $\mathbb{C}[q^{\pm 1}]$ with a basis given explicitly.

%
%

\paragraph{Acknowledgements.} The authors thank Yvan Saint-Aubin for many interesting discussions. The first author is supported by Agence Nationale de la Recherche Projet AHA ANR-18-CE40-0001 and the international research project AAPT of the CNRS. The second author is grateful to the LMR for its hospitality and is supported by the international research project AAPT of the CNRS. The second author also holds an Alexander-Graham-Bell scholarship from the Natural Sciences and Engineering Research Council of Canada (NSERC).

\setcounter{tocdepth}{1}
\tableofcontents

\section{The symmetric one-boundary Hecke algebra}\label{sec-A}

We will use the notations:
\[[r]_x=\frac{x^r-x^{-r}}{x-x^{-1}}=x^{r-1}+x^{r-3}+\dots+x^{1-r}\ \ \ \text{and}\ \ \ [r]_x!=[2]_x\dots [r]_x
\ .\]
We will be working with the ring $R=\bC[q^{\pm 1},\alpha_1^{\pm 1},\alpha_2^{\pm 1}]$, where $q,\alpha_1,\alpha_2$ are indeterminates, and with its field of fractions $F=\bC(q,\alpha_1,\alpha_2)$.

\subsection{The cyclotomic Hecke algebra of level 2}\label{subsec-cyclo}

Let $n\geq0$. We define the algebra $\cH{n}$ as the algebra over $R$ with generators $g_i$ for $i=0,1,\dots,n-1$ and defining relations
\begin{alignat}{2}
	&g_ig_{i+1}g_i=g_{i+1}g_ig_{i+1}, &&\quad 1\leq i \leq n-2, \label{eq:cycHrel1} \\	
	&g_0g_1g_0g_1 = g_1g_0g_1g_0, && \label{eq:cycHrel2}\\
	&g_ig_j=g_jg_i, && \quad |i-j|\geq 2, \label{eq:cycHrel3}\\
	&(g_i-q)(g_i+q^{-1})=0, && \quad 1\leq i \leq n-1, \label{eq:cycHrel4}\\
	&(g_0-\alpha_1)(g_0-\alpha_2)=0. && \label{eq:cycHrel5}
\end{alignat}
By convention, $\cH{0}=R$. The algebra is a quotient of the affine Hecke algebra of type $A$ by the last relation. It is called a cyclotomic Hecke algebra of level 2 since this last relation is a quadratic characteristic relation for $g_0$. The algebra $\cH{n}$ is also the Hecke algebra associated to a Coxeter group of type $B_n$. For what is recalled below about $\cH{n}$, see \emph{e.g.} \cite{GP}.

The algebra $\cH{n}$ is free as an $R$-module and has a basis labelled by the elements of the Coxeter group of type $B_n$. We will abuse notations and denote $B_n$ this Coxeter group. Its elements can be viewed as signed permutations, that is, those permutations $\omega$ on the set $\{-n,-n+1,\dots,-1,1,2,\dots,n\}$ such that $\omega(-i)=-\omega(i)$ for all $i \in \{1,2,\dots,n\}$. We can represent the elements of $B_n$ by words $b=b_1b_2\dots b_n$ where each of the numbers $1,2,\dots,n$ appears once and is possibly barred (see \textit{e.g.}\ \cite{Sim}). In this representation, $b_i$ is the image of $i$ by $\omega$, where the bar notation is understood as a negative sign. The group $B_n$ contains $n!2^n$ elements. 

We denote by $s_i$ the transposition of $i$ and $i+1$ in $B_n$ (which thus also transposes $-i$ and $-(i+1)$), and by $s_0$ the transposition of $-1$ and $1$. The group $B_n$ is generated by $s_i$ with $i=0,\dots,n-1$. For an element $\omega\in B_n$, we write it as a reduced expression, that is, as a product $s_{i_1}\dots s_{i_k}$ with minimal $k$. We set $g_{\omega}=g_{i_1}\dots g_{i_k}$ in $\cH{n}$. The element $g_{\omega}$ does not depend on the choice of the reduced expression, and the set $\{g_{\omega}\}_{\omega\in B_n}$ forms an $R$-basis of $\cH{n}$. A standard choice of reduced forms leads to an explicit expression for the basis as the following product of sets:
\begin{equation}
\left\{\begin{array}{c} 
1,\\ g_0 
\end{array}\right\} 
\cdot 
\left\{\begin{array}{c} 
1,\\ g_1, \\ g_1g_0, \\ g_1g_0g_1 \end{array}\right\}
\cdot 
\left\{\begin{array}{c} 
1,\\ g_2, \\ g_2g_1, \\ g_2g_1g_0, \\ g_2g_1g_0g_1, \\ g_2g_1g_0g_1g_2 \end{array}\right\}
\cdot\ \ldots\ \cdot 
\left\{\begin{array}{c} 1,\\ g_{n-1}, \\ \vdots \\ g_{n-1}\dots g_1g_0,\\ g_{n-1}\dots g_1g_0g_1, \\ \vdots \\ g_{n-1}\dots g_1g_0g_1\dots g_{n-1} \end{array}\right\}. \label{eq:basecH}
\end{equation} 
Introducing the following notation for $0\leq m\leq n$:
\[[n,m]=g_n\dots g_{m+1}g_m\ \ \ \ \ \text{and}\ \ \ \ [n,-m]=g_n\dots g_1g_0g_1\dots g_m\,,\]
the basis elements can be written as:
\[[n_1,m_1][n_2,m_2]\dots [n_k,m_k]\ \ \ \ \ \ \text{with}\ 0\leq n_1<n_2<\dots<n_k\leq n-1\ \ \text{and}\ \ |m_i|\leq n_i\ .\]
The algebra $\cH{n-1}$ is naturally a subalgebra of $\cH{n}$, the one generated by $g_0,\dots,g_{n-2}$, where elements of $B_{n-1}$ are naturally identified with elements of $B_n$ leaving invariant the letter $n$.

\paragraph{Hecke algebra of type A.} The algebra generated by $g_1,\dots,g_{n-1}$ with defining relations \eqref{eq:cycHrel1}, \eqref{eq:cycHrel3}, \eqref{eq:cycHrel4} is the usual Hecke algebra $H_n$ of type $A$, associated to the symmetric group $\mathfrak{S}_n$ on $n$ letters. It is naturally identified as the subalgebra of $\cH{n}$ generated by $g_1,\dots,g_{n-1}$, and a basis of $H_n$ is the subset $\{g_\omega\}_{\omega\in \fS_n}$, when the symmetric group is naturally considered as a subgroup of $B_n$.

The basis of $H_n$ is made of those elements in \eqref{eq:basecH} which do not contain $g_0$. The basis \eqref{eq:basecH} is well adapted to the inclusion $B_{n-1}\subset B_n$. There is a different way to produce a basis of $\cH{n}$ adapted to the inclusion $\fS_n\subset B_n$, which is the set of elements:
\begin{equation}\label{eq:basecH2}
g_{\omega}\cdot g_0g_1\dots g_{i_1}\cdot \ldots \cdot g_0g_1\dots g_{i_k}\ ,\ \ \ \ \omega\in\fS_n\ \text{and}\ n-1\geq i_1>\dots >i_k\geq 0\ .
\end{equation}

\subsection{Central quasi-idempotents in $\cH{n}$}\label{subsec-idem}

For $i=0,1,\dots , n-1$ and $\omega \in B_n$, we denote $\ell(\omega)$ the length of $\omega$, which is the number of generators appearing in any reduced expression of $\omega$. We denote $\ell_0(\omega)$ the number of times that the generator $s_0$ appears in a reduced expression for $\omega$. This does not depend on the reduced expression since all braid relations in $\cH{n}$ are homogeneous in $g_0$.

\paragraph{$q$-symmetriser and $q$-antisymmetriser in the Hecke algebra.} First we discuss the standard quasi-idempotents in the usual Hecke agebra $H_n$ generated by $g_1,\dots,g_{n-1}$. Let $x\in \{ q, -q^{-1} \}$ and set:
\begin{equation}\label{eq:symHecke}
\Lambda^{x}_n(g_1,\dots,g_{n-1})=\sum_{\omega\in\fS_{n}} x^{\ell(\omega)}g_\omega\ .
\end{equation}
By convention $\Lambda^{x}_1=1$. Using the basis in \eqref{eq:basecH} without the elements containing $g_0$, we find the recursive formula:
\[
\Lambda^{x}_n(g_1,\dots,g_{n-1})=\Lambda^{x}_{n-1}(g_1,\dots,g_{n-2})(1+x g_{n-1}+\dots+x^{n-1}g_{n-1}\dots g_1)\ .
\]
It is well-known and easy to check (see the proof of Proposition \ref{propE} below) that:
\[\Lambda^{x}_n(g_1,\dots,g_{n-1})g_i=g_i\Lambda^{x}_n(g_1,\dots,g_{n-1}) =x\Lambda^{x}_n(g_1,\dots,g_{n-1}), \ \ \quad i=1,\dots, n-1\,.\]
It follows that these two elements are central in $H_n$ and are quasi-idempotents, namely,
\[\bigl(\Lambda^{x}_n(g_1,\dots,g_{n-1})\bigr)^2=q^{\pm\frac{n(n-1)}{2}}[n]_{q}!\,\Lambda^{x}_n(g_1,\dots,g_{n-1})\ \ \ \ \ \text{when $x=\pm q^{\pm1}$}.\]
To find the coefficient, one needs to replace each $g_i$ by $x$ in the formula for the quasi-idempotents. This is easily done using their recursive formula. We refer to the element with $x=q$ as the (unnormalised) $q$-symmetriser of $H_n$ and to the element with $x=-q^{-1}$ as the (unnormalised) $q$-antisymmetriser of $H_n$.

\paragraph{The four quasi-idempotents in $\cH{n}$.} Let $x\in \{ q, -q^{-1} \}$ and $b\in\{1,2\}$. In what follows, it will be convenient to consider the indices of $\alpha_1,\alpha_2$ modulo 2, so that $\alpha_{b+1}=\alpha_1$ when $b=2$. We define:
\begin{equation}\label{defE}
	E^{(x,\alpha_b)}_n := \sum_{\omega \in B_n} z_\omega g_\omega, \qquad \text{where } z_\omega := x^{\ell(\omega)-\ell_0(\omega)}(-\alpha_{b+1}^{-1})^{\ell_0(\omega)}\ .
\end{equation}
By convention, $E^{(x,\alpha_b)}_0=1$. Using the standard basis in \eqref{eq:basecH}, a recursive formula for these elements is:
\begin{equation}
	E^{(x,\alpha_b)}_n = E^{(x,\alpha_b)}_{n-1}(1+\sum_{i=1}^{n-1}x^{n-i}g_{n-1}\dots g_{i}-x^{n-1}\alpha_{b+1}^{-1}g_{n-1}\dots g_1g_0(1+\sum_{i=1}^{n-1}x^ig_1\dots g_i) ). \label{eq:Erec} 
\end{equation} 
Using the basis \eqref{eq:basecH2} adapted to the embedding $H_n\subset \cH{n}$, we also have:
\begin{equation}
E_n^{(x,\alpha_b)}=\Lambda^{x}_n(g_1,\dots,g_{n-1})\cdot (1-x^{n-1}\alpha_{b+1}^{-1}g_0g_1\dots g_{n-1})\ldots (1-x\alpha_{b+1}^{-1}g_0g_1)(1-\alpha_{b+1}^{-1}g_0)\,. \label{eq:ELambda} 
\end{equation}
Explicit examples for small $n$ are
\begin{align*}
	&E^{(x,\alpha_b)}_1 = 1-\alpha_{b+1}g_0\,,\\
	&E^{(x,\alpha_b)}_2=1+xg_1-\alpha_{b+1}g_0-x\alpha_{b+1}(g_1g_0+g_0g_1)-x^2\alpha_{b+1}g_1g_0g_1+x\alpha_{b+1}^2g_0g_1g_0+x^2\alpha_{b+1}^2g_0g_1g_0g_1\ . \label{eq:E}
\end{align*} 
We recall the important facts about the elements $E^{(x,\alpha_b)}_n$, implying in particular that they are central quasi-idempotents of $\cH{n}$.
\begin{prop}\label{propE}
Let $x=\pm q^{\pm1}$ and $b\in\{1,2\}$. We have:
\begin{equation}\label{eq:proprE}
E^{(x,\alpha_b)}_n g_0=g_0 E^{(x,\alpha_b)}_n =\alpha_bE^{(x,\alpha_b)}_n\ \ \ \text{and}\ \ \ E^{(x,\alpha_b)}_n g_i=g_iE^{(x,\alpha_b)}_n =xE^{(x,\alpha_b)}_n, \ \quad i=1,\dots, n-1\,,
\end{equation}
and
\begin{align}
	\bigl(E^{(x,\alpha_b)}_n\bigr)^2 =P_n(x,\alpha_b)E^{(x,\alpha_b)}_n\,,\ \ \ \text{where}\ P_n(x,\alpha_b)= q^{\pm\frac{n(n-1)}{2}}[n]_{q}!\prod_{i=0}^{n-1}\left(1-q^{\pm 2i}\frac{\alpha_b}{\alpha_{b+1}}\right)\ . \label{eq:Ensquare}
\end{align}	 
\end{prop}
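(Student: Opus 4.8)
The plan is to establish the two displayed families of identities \eqref{eq:proprE} and \eqref{eq:Ensquare} essentially by the same bookkeeping argument that underlies the classical symmetriser/antisymmetriser identities, but now keeping track of the extra generator $g_0$ through the statistic $\ell_0$. First I would prove the eigenvector-type relations in \eqref{eq:proprE}. For the relations $E_n^{(x,\alpha_b)} g_i = x E_n^{(x,\alpha_b)}$ with $i\ge 1$, I would use the factorised expression \eqref{eq:ELambda}: the rightmost factors $(1-x^{j}\alpha_{b+1}^{-1}g_0g_1\cdots g_j)$ commute with $g_i$ once we are far enough to the right, and the leftmost factor $\Lambda^x_n(g_1,\dots,g_{n-1})$ already absorbs $g_i$ from the right with eigenvalue $x$ by the recalled Hecke-algebra fact; one then pushes $g_i$ leftward through the boundary factors, using the braid relation \eqref{eq:cycHrel1} and the quadratic relation \eqref{eq:cycHrel4} to reorganise, and checks the eigenvalue is preserved. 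For multiplication on the left by $g_i$, I would argue symmetrically, or simply invoke the anti-automorphism of $\cH{n}$ fixing each generator (which sends $g_\omega \mapsto g_{\omega^{-1}}$ and preserves both $\ell$ and $\ell_0$), so that $E_n^{(x,\alpha_b)}$ is invariant under it and the left relations follow from the right ones. The relation involving $g_0$, namely $E_n^{(x,\alpha_b)} g_0 = \alpha_b E_n^{(x,\alpha_b)}$, is the genuinely new ingredient: here I would work directly from the definition \eqref{defE}, splitting the sum over $\omega\in B_n$ according to whether $\ell_0(\omega s_0) = \ell_0(\omega)\pm 1$, pair up the terms, and use the quadratic relation \eqref{eq:cycHrel5} in the form $g_0^2 = (\alpha_1+\alpha_2)g_0 - \alpha_1\alpha_2$ together with the coefficient choice $z_\omega = x^{\ell-\ell_0}(-\alpha_{b+1}^{-1})^{\ell_0}$ to see that the net effect of right multiplication by $g_0$ is scaling by $\alpha_b$. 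The combinatorial input is the standard fact about how lengths and the $s_0$-statistic behave under right multiplication by a generator in a Coxeter group, which controls which pairs of basis elements get merged.

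Next, granting \eqref{eq:proprE}, the quasi-idempotent identity \eqref{eq:Ensquare} is almost immediate in structure: since every generator $g_i$ ($i\ge 1$) acts on $E_n^{(x,\alpha_b)}$ from either side by the scalar $x$ and $g_0$ acts by $\alpha_b$, multiplying $E_n^{(x,\alpha_b)}$ on the left by the expansion $\sum_\omega z_\omega g_\omega$ gives $\bigl(\sum_\omega z_\omega \cdot (\text{scalar by which } g_\omega \text{ acts})\bigr) E_n^{(x,\alpha_b)}$. Each $g_\omega$ acts by $x^{\ell(\omega)-\ell_0(\omega)}\alpha_b^{\ell_0(\omega)}$ (applying \eqref{eq:proprE} one generator at a time along a reduced word for $\omega$), so the scalar is
\[
P_n(x,\alpha_b) = \sum_{\omega\in B_n} z_\omega\, x^{\ell(\omega)-\ell_0(\omega)}\alpha_b^{\ell_0(\omega)} = \sum_{\omega\in B_n} x^{2(\ell(\omega)-\ell_0(\omega))}\bigl(-\alpha_b\alpha_{b+1}^{-1}\bigr)^{\ell_0(\omega)}\,.
\]
It then remains to evaluate this sum in closed form. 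The cleanest route is to recognise that $P_n(x,\alpha_b)$ is obtained from the expression \eqref{eq:ELambda} for $E_n^{(x,\alpha_b)}$ by the algebra homomorphism $\cH{n}\to R$ sending $g_i \mapsto x$ for $i\ge 1$ and $g_0\mapsto \alpha_b$ (this is the one-dimensional representation from the introduction's table). Under that specialisation, $\Lambda^x_n(g_1,\dots,g_{n-1})\mapsto q^{\pm n(n-1)/2}[n]_q!$ by the recalled Hecke fact, and the $j$-th boundary factor $(1-x^{j}\alpha_{b+1}^{-1}g_0g_1\cdots g_j)\mapsto 1 - x^{j}\alpha_{b+1}^{-1}\alpha_b x^{j} = 1 - q^{\pm 2j}\alpha_b\alpha_{b+1}^{-1}$, where I have used $x^2 = q^{\pm 2}$ for $x=\pm q^{\pm1}$. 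Taking the product over $j=0,1,\dots,n-1$ yields exactly $\prod_{i=0}^{n-1}(1-q^{\pm 2i}\alpha_b/\alpha_{b+1})$, giving the asserted formula for $P_n(x,\alpha_b)$. A quick sanity check against $E_1^{(x,\alpha_b)}=1-\alpha_{b+1}^{-1}g_0$ (beware the printed examples use a different normalisation for the small-$n$ display, so I would reconcile the sign/inverse conventions) and against the $n=2$ case confirms the formula.

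The main obstacle I anticipate is the careful combinatorial handling of the $g_0$-relation: unlike the type-$A$ generators, $g_0$ appears with its own exponent statistic $\ell_0$, and one must verify that right multiplication by $g_0$ on a basis element $g_\omega$ either lengthens the word (changing $\ell_0$ by $+1$, with coefficient rescaled by $-\alpha_{b+1}^{-1}$) or shortens it, and that the "shortening" case is correctly paired with the corresponding "lengthening" case so that the quadratic relation \eqref{eq:cycHrel5} produces the clean scalar $\alpha_b$ rather than a mess. This is where the hypothesis $x = \pm q^{\pm1}$ is used (it makes the type-$A$ part collapse via $g_i \mapsto x$) and where the specific choice of $z_\omega$ in \eqref{defE} is forced. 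Everything else — deriving \eqref{eq:Ensquare} from \eqref{eq:proprE}, and evaluating $P_n$ via the one-dimensional specialisation — is routine once the first part is in place. I would also remark that centrality of $E_n^{(x,\alpha_b)}$ is an immediate corollary: it commutes with every generator by \eqref{eq:proprE}, hence with all of $\cH{n}$.
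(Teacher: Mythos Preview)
Your proposal is correct, and the second half (deriving \eqref{eq:Ensquare} from \eqref{eq:proprE} by the one-dimensional specialisation $g_i\mapsto x$, $g_0\mapsto\alpha_b$ applied to \eqref{eq:ELambda}) is exactly what the paper does. The difference lies in how you handle \eqref{eq:proprE}. The paper does \emph{not} treat $g_0$ and the $g_i$'s separately: it runs the pairing argument uniformly for every generator, writing
\[
E_n^{(x,\alpha_b)}=\sum_{\ell(s_i\omega)>\ell(\omega)}(z_\omega g_\omega+z_{s_i\omega}g_ig_\omega)
\]
and observing that $(g_i-x)(1+xg_i)=0$ (resp.\ $(g_0-\alpha_b)(1-\alpha_{b+1}^{-1}g_0)=0$) kills each pair. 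So the very argument you reserve for $g_0$ already handles all the $g_i$'s at once, and your detour through the factorisation \eqref{eq:ELambda} for $i\ge1$ is unnecessary. If you do want to use the factorised form, note that you have the easy and hard directions swapped: \emph{left} multiplication by $g_i$ ($i\ge1$) is immediate because $\Lambda^x_n$ sits on the left and absorbs it, while \emph{right} multiplication by $g_0$ is immediate because the rightmost factor $(1-\alpha_{b+1}^{-1}g_0)$ satisfies $(1-\alpha_{b+1}^{-1}g_0)g_0=\alpha_b(1-\alpha_{b+1}^{-1}g_0)$; the remaining two cases then follow from the anti-automorphism. Pushing $g_i$ leftward through all the boundary factors, as you sketch, works but is the most laborious of the four routes. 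Either way the content is the same; the paper's uniform pairing is just the cleanest packaging.
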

\begin{proof}
For any $\omega\in B_n$ and any $i \in \{0,1,\dots,n-1\}$, we have $\ell(s_i\omega) = \ell(\omega)\pm 1$. If $\ell(s_i\omega)>\ell(\omega)$, then we have $g_{s_iw}=g_ig_w$. Therefore we can write:
\begin{equation}
	E^{(x,\alpha_b)}_n = \sum_{\genfrac{}{}{0pt}{}{\omega \in B_n}{\ell(s_i\omega)>\ell(\omega)}} (z_\omega g_\omega + z_{s_i\omega} g_ig_{\omega}).
\end{equation}
Note that under the hypothesis that $\ell(s_i\omega)>\ell(\omega)$, we must have $z_{s_i\omega} = x z_{\omega}$ if $1\leq i \leq n-1$ and $z_{s_i\omega} = -\alpha_{b+1}^{-1} z_{\omega}$ if $i=0$. The defining relations \eqref{eq:cycHrel4} and \eqref{eq:cycHrel5} imply that
\begin{equation}
	(g_0-\alpha_b)(1-\alpha_{b+1}^{-1}g_0)=0\ \ \ \ \text{and}\ \ \ \ \ (g_i-x)(g_i+x^{-1})=0\ \ (i=1,\dots,n-1).
\end{equation}
Using the previous equations, we have, for example if $1\leq i \leq n-1$, that
\begin{equation}
	(g_i-x)E^{(x,\alpha_b)}_n
	= \sum_{\genfrac{}{}{0pt}{}{\omega \in B_n}{\ell(s_i\omega)>\ell(\omega)}} (g_i-x)(z_\omega g_\omega + xz_{\omega} g_ig_{\omega}) 
	= \sum_{\genfrac{}{}{0pt}{}{\omega \in B_n}{\ell(s_i\omega)>\ell(\omega)}} xz_\omega(g_i-x)( x^{-1} + g_i)g_\omega=0.
\end{equation}
The case $i=0$ is similarly done. Moreover, similar arguments can be used when considering instead the product $E^{(x,\alpha_b)}_ng_i$ for $0 \leq i \leq n-1$. This proves \eqref{eq:proprE}.

Now the coefficient $P_n(x,\alpha_b)$ in \eqref{eq:Ensquare} is found by replacing $g_0$ by $\alpha_b$ and the other $g_i$'s by $x$ in the formula for $E^{(x,\alpha_b)}_n$. The given formula for $P_n(x,\alpha_b)$ follows then easily from \eqref{eq:ELambda}.
\end{proof}

\begin{rem}
Over the field of fractions $F$, or in a specialisation such that $P_{n-1}(x,\alpha_b)\neq0$, we have also the following recursive formula:
\begin{equation}
E^{(x,\alpha_b)}_n = E^{(x,\alpha_b)}_{n-1}+x\frac{E^{(x,\alpha_b)}_{n-1}g_{n-1}E^{(x,\alpha_b)}_{n-1}}{P_ {n-2}(x,\alpha_b)}-x^{2(n-1)}\alpha_{b+1}^{-1}\frac{E^{(x,\alpha_b)}_{n-1}g_{n-1}\dots g_1g_0g_1\dots g_{n-1}E^{(x,\alpha_b)}_{n-1}}{P_ {n-1}(x,\alpha_b)}\ . \label{eq:Erec2}
\end{equation}
\end{rem}

\subsection{The symmetric one-boundary Hecke algebra $\cA_n$}

We define below the main object of this section that we call the symmetric one-boundary Hecke algebra.
\begin{defi}
Let $n\geq 0$. We define the symmetric one-boundary Hecke algebra $\cA_n$ as the algebra over $R$ which is the quotient of $\cH{n}$ by the relation:
\begin{equation}
	E_2^{(-q^{-1},\alpha_2)}=0\ . \label{eq:relidem1}
\end{equation}
It is understood that $\cA_n=\cH{n}$ if $n=0,1$.
\end{defi}
Using the explicit expression of $E_2^{(-q^{-1},\alpha_2)}$, this is equivalent to imposing the following relation:
\begin{equation}
	g_0g_1g_0g_1=-q^{2}\alpha_1^{2}+q\alpha_1^{2}g_1+q^{2}\alpha_1g_0-q\alpha_1(g_1g_0+g_0g_1) +\alpha_1 g_1g_0g_1+qg_0g_1g_0\ . \label{eq:relquotient}
\end{equation}

\paragraph{Semisimple representation theory.} In this paragraph, we extend the algebras $\cH{n}$ and $\cA_n$ over the field of fractions $F$, and denote them $F\cH{n}$ and $F\cA_n$ to avoid any confusion. The representation theory of $F\cH{n}$ is well-known \cite{AK,Ho,OP11}, and can be described in terms of bipartitions and Young tableaux.

A partition $\lambda$ of $n$, denoted $\lambda\vdash n$, is a decreasing sequence of  positive integers $\lambda=(\lambda_1,\dots,\lambda_k)$ with $\lambda_1\geq\lambda_2\geq\dots\geq\lambda_k \geq 1$ such that $\lambda_1+\dots+\lambda_k=n$. We also say that $\lambda$ is a partition {\em of size} $n$ and denote $|\lambda|=n$. We identify partitions with their Young diagrams: the Young diagram of $\lambda$ is a left-justified array of rows of boxes such that the $j$-th row (we count from top to bottom) contains  $\lambda_j$ boxes. The number of non-empty rows is the length $\ell(\lambda)$ of $\lambda$. By convention, the empty set $\emptyset$ is the only partition of $n=0$.

A standard tableau of shape $\lambda$  is a bijective filling of the boxes of $\lambda$ by numbers $1,\dots,n$ such that the entries strictly increase along any row and down 
 any column of the diagram. We denote by $d_{\lambda}$ the number of standard tableaux of shape $\lambda$. From the representation theory of the symmetric group, or from the Robinson--Schensted correspondence, we have:
 \begin{equation}\label{dimfactn}
 \sum_{\lambda\vdash n}d_{\lambda}^2=n!\ .
 \end{equation}

A bipartition of size $n$ is a pair of partition $(\lambda,\mu)$ such that $|\lambda|+|\mu|=n$. We denote $Par_2(n)$ the set of bipartitions of $n$. A standard tableau of shape $(\lambda,\mu)$ is a bijective filling of the boxes of $\lambda$ and $\mu$ by the numbers $1,\dots,n$ such that the entries strictly increase along any row and down 
 any column of the two diagrams. The number of standard tableaux of shape $(\lambda,\mu)$ is easily seen to be:
 \begin{equation}\label{dimbipart}
 d_{\lambda,\mu}=\binom{n}{|\lambda|}d_{\lambda} d_{\mu}\ .
 \end{equation}

The set of irreducible representations of $F\cH{n}$ is indexed by the bipartitions of size $n$, and we will denote $V_{(\lambda,\mu)}$ the irreducible representations indexed by $(\lambda,\mu)\in Par_2(n)$ so that:
\[Irr(F\cH{n})=\{V_{(\lambda,\mu)}\ |\ (\lambda,\mu)\in Par_2(n)\}\ .\]
There are four one-dimensional representations of $F\cH{n}$ for $n\geq 2$ and the parametrisation is made such that they correspond to the following bipartitions of $n$, with the given corresponding values, respectively, of $g_0$ and of $g_i$, $i=1,\dots,n-1$:
\begin{equation}\label{1dimrep}\begin{array}{lllllll}
(\Box\dots\Box\,,\,\emptyset) &\quad& (\begin{array}{l}\Box\\[-0.8em] \vdots\\[-0.4em] \Box\end{array}\,,\,\emptyset) & \quad &
(\emptyset\,,\,\Box\dots\Box) &\quad& (\emptyset\,,\,\begin{array}{l}\Box\\[-0.8em] \vdots\\[-0.4em] \Box\end{array})
\\
\ \ g_0\mapsto \alpha_1 && g_0\mapsto \alpha_1 && \ \ g_0\mapsto \alpha_2 && g_0\mapsto \alpha_2 \\
\ \ g_i\mapsto q        && g_i\mapsto -q^{-1} && \ \ g_i\mapsto q && g_i\mapsto -q^{-1}\\
\end{array}\end{equation}
Moreover, the branching rules expressing the restriction from $\cH{n}$ to $\cH{n-1}$ are given by inclusion of bipartitions (or more precisely, of their Young diagrams), as shown in the begining of the Bratteli graph below. We refer to an appendix in \cite{CP} for a discussion of Bratteli diagrams and of quotients of semisimple algebras by central idempotents.

The parametrisation of the irreducible representations is uniquely fixed by these requirements, and the dimension of the irreducible representation $V_{(\lambda,\mu)}$ is the number of standard tableaux of shape $(\lambda,\mu)$:
\[\dim V_{(\lambda,\mu)}=d_{(\lambda,\mu)}=\binom{n}{|\lambda|}d_{\lambda} d_{\mu}\ .\]

\begin{center}
\begin{tikzpicture}[scale=0.22]
\node at (0,0) {$(\emptyset,\emptyset)$};

\draw[thick] (-0.5,-1.5) -- (-5.5,-3.5);
\draw[thick] (0.5,-1.5) -- (5.5,-3.5);

\node at (-8,-5) {$($};\diag{-7.5}{-4.5}{1};\node at (-5,-5) {$,\,\emptyset)$};
\node at (5,-5) {$(\emptyset\,,$};\diag{6.5}{-4.5}{1};\node at (8,-5) {$)$};

\draw[thick] (-7.5,-6.5) -- (-17.5,-9.5);
\draw[thick] (-6,-6.5) -- (-8.5,-9.5);
\draw[thick] (-4.5,-6.5) -- (-1.5,-9.5);
\draw[thick] (4.5,-6.5) -- (1.5,-9.5);
\draw[thick] (6,-6.5) -- (8.5,-9.5);
\draw[thick] (7.5,-6.5) -- (17.5,-9.5);

\node at (-20,-11) {$($};\diag{-19.5}{-10.5}{2};\node at (-16,-11) {$,\,\emptyset)$};
\node at (-10,-11) {$($};\diagg{-9.5}{-10.5}{1}{1};\node at (-7,-11) {$,\,\emptyset)$};
\node at (-2,-11) {$($};\diag{-1.5}{-10.5}{1};\node at (0,-11) {$,$};\diag{0.5}{-10.5}{1};\node at (2,-11) {$)$};
\node at (7,-11) {$(\emptyset\,,$};\diag{8.5}{-10.5}{2};\node at (11,-11) {$)$};
\node at (17,-11) {$(\emptyset\,,$};\diagg{18.5}{-10.5}{1}{1};\node at (20,-11) {$)$};

\draw[thick] (-19.5,-12.5) -- (-36.5,-15.5);
\draw[thick] (-17.5,-12.5) -- (-29,-15.5);
\draw[thick] (-15.5,-12.5) -- (-13,-15.5);

\draw[thick] (-10.2,-12.7) -- (-26,-15.5);
\draw[thick] (-7.6,-12.8) -- (-19.5,-15.5);
\draw[thick] (-6.5,-12.5) -- (-5.5,-15.5);

\draw[thick] (-1.5,-12.5) -- (-11,-15.5);
\draw[thick] (-0.5,-12.5) -- (-4,-15.5);
\draw[thick] (0.5,-12.5) -- (2,-15.5);
\draw[thick] (1.5,-12.5) -- (10,-15.5);

\draw[thick] (6.5,-12.5) -- (3.5,-15.5);
\draw[thick] (8.5,-12.5) -- (17,-15.5);
\draw[thick] (10.5,-12.5) -- (25,-15.5);

\draw[thick] (17,-12.5) -- (12,-15.5);
\draw[thick] (17.7,-12.8) -- (27,-15.5);
\draw[thick] (20.1,-12.7) -- (35,-15.5);

\node at (-39,-17) {$($};\diag{-38.5}{-16.5}{3};\node at (-34,-17) {$,\,\emptyset)$};
\node at (-30,-17) {$($};\diagg{-29.5}{-16.5}{2}{1};\node at (-26,-17) {$,\,\emptyset)$};
\node at (-22,-17) {$($};\diaggg{-21.5}{-16.5}{1}{1}{1};\node at (-19,-17) {$,\,\emptyset)$};

\node at (-15,-17) {$($};\diag{-14.5}{-16.5}{2};\node at (-12,-17) {$,$};\diag{-11.5}{-16.5}{1};\node at (-10,-17) {$)$};
\node at (-7,-17) {$($};\diagg{-6.5}{-16.5}{1}{1};\node at (-5,-17) {$,$};\diag{-4.5}{-16.5}{1};\node at (-3,-17) {$)$};

\node at (1,-17) {$($};\diag{1.5}{-16.5}{1};\node at (3,-17) {$,$};\diag{3.5}{-16.5}{2};\node at (6,-17) {$)$};
\node at (9,-17) {$($};\diag{9.5}{-16.5}{1};\node at (11,-17) {$,$};\diagg{11.5}{-16.5}{1}{1};\node at (13,-17) {$)$};

\node at (17,-17) {$(\emptyset\,,$};\diag{18.5}{-16.5}{3};\node at (22,-17) {$)$};
\node at (26,-17) {$(\emptyset\,,$};\diagg{27.5}{-16.5}{2}{1};\node at (30,-17) {$)$};
\node at (34,-17) {$(\emptyset\,,$};\diaggg{35.5}{-16.5}{1}{1}{1};\node at (37,-17) {$)$};
\end{tikzpicture}
\end{center}
Thanks to its properties recalled in Proposition \ref{propE}, the element $E_m^{(x,\alpha_b)}$ in $F\cH{m}$ is a non-zero element proportional to the minimal central idempotent corresponding to the one-dimensional representation associated to $(x,\alpha_b)$. It means that it is non-zero in this one-dimensional representation of $F\cH{m}$ and acts as $0$ in any other irreducible representation. Now if $n\geq m$, it follows that $E_m^{(x,\alpha_b)}$ seen as an element of $\cH{n}$ is non-zero in an irreducible representation if and only if this irreducible representation contains in its restriction to $\cH{m}$ the given one-dimensional representation. Therefore the quotient by $E_m^{(x,\alpha_b)}=0$ removes exactly these irreducible representations.

In the particular case of $F\cA_n$, which is the quotient of $\cH{n}$ by the relation $E_2^{(-q^{-1},\alpha_2)}=0$, we recall the indexing of one-dimensional representations set up in (\ref{1dimrep}), and we find that the disappearing representations are those $V_{(\lambda,\mu)}$ with $\mu$ having at least two non-empty rows. We summarise this discussion in the following proposition.
\begin{prop}\label{prop:repA}
The algebra $F\cA_n$ is semisimple with the following set of irreducible representations:
\[Irr(F\cA_n)=\{V_{(\lambda,\mu)}\ |\ (\lambda,\mu)\in Par_2(n)\ \text{and}\ \ell(\mu)<2\}\ .\]
\end{prop}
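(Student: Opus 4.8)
The plan is to derive this directly from the semisimplicity of $F\cH{n}$, the quasi-idempotent properties established in Proposition \ref{propE}, and the branching rules displayed in the Bratteli diagram above; this merely formalises the paragraph preceding the statement. First I would record that over $F$ the scalar
\[P_2(-q^{-1},\alpha_2)=q^{-1}[2]_q\bigl(1-\tfrac{\alpha_2}{\alpha_1}\bigr)\bigl(1-q^{-2}\tfrac{\alpha_2}{\alpha_1}\bigr)\]
is nonzero, so that $e:=P_2(-q^{-1},\alpha_2)^{-1}E_2^{(-q^{-1},\alpha_2)}$ is a genuine idempotent of $F\cH{2}\subseteq F\cH{n}$, central in $F\cH{2}$, and $F\cA_n=F\cH{n}/\langle e\rangle$, where $\langle e\rangle$ is the two-sided ideal generated by $e$. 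Since $F\cH{n}$ is semisimple, $\langle e\rangle$ is the sum of the simple blocks $\mathrm{End}(V_{(\lambda,\mu)})$ on which $e$ (equivalently $E_2^{(-q^{-1},\alpha_2)}$) acts nonzero; hence $F\cA_n$ is again semisimple and $Irr(F\cA_n)$ consists exactly of those $V_{(\lambda,\mu)}$ on which $E_2^{(-q^{-1},\alpha_2)}$ acts as $0$.

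Next I would identify those representations. By Proposition \ref{propE}, inside $F\cH{2}$ the element $E_2^{(-q^{-1},\alpha_2)}$ is a nonzero multiple of the minimal central idempotent attached to the one-dimensional representation sending $g_0\mapsto\alpha_2$ and $g_1\mapsto-q^{-1}$, which by the parametrisation (\ref{1dimrep}) is $V_{(\emptyset,(1,1))}$. For an arbitrary $(\lambda,\mu)\in Par_2(n)$, the action of this element (lying in $F\cH{2}$) on $V_{(\lambda,\mu)}$ is its action on $\mathrm{Res}^{\cH{n}}_{\cH{2}}V_{(\lambda,\mu)}$; being central in $F\cH{2}$, it acts as the scalar $P_2(-q^{-1},\alpha_2)$ on every copy of $V_{(\emptyset,(1,1))}$ occurring in this restriction and as $0$ on every other $F\cH{2}$-irreducible summand. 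Therefore $E_2^{(-q^{-1},\alpha_2)}$ acts nonzero on $V_{(\lambda,\mu)}$ if and only if $V_{(\emptyset,(1,1))}$ occurs in $\mathrm{Res}^{\cH{n}}_{\cH{2}}V_{(\lambda,\mu)}$.

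Finally I would invoke the branching rules: restriction from $\cH{m}$ to $\cH{m-1}$ amounts to removing one box from one of the two Young diagrams, so iterating down to $\cH{2}$ shows that $V_{(\emptyset,(1,1))}$ occurs in $\mathrm{Res}^{\cH{n}}_{\cH{2}}V_{(\lambda,\mu)}$ if and only if there is a chain of bipartitions $(\emptyset,(1,1))\subset\cdots\subset(\lambda,\mu)$ adding one box at a time, i.e.\ if and only if $(\emptyset,(1,1))$ is contained componentwise in $(\lambda,\mu)$ as a pair of Young diagrams, which happens precisely when $\mu_2\geq 1$, that is $\ell(\mu)\geq 2$. Combining with the previous step, the blocks lying in $\langle e\rangle$, hence removed in $F\cA_n$, are exactly the $V_{(\lambda,\mu)}$ with $\ell(\mu)\geq 2$, giving $Irr(F\cA_n)=\{V_{(\lambda,\mu)}\mid(\lambda,\mu)\in Par_2(n),\ \ell(\mu)<2\}$.

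The only point requiring genuine care — everything else being routine bookkeeping (the non-vanishing of $P_2$, the combinatorics of the branching graph, and the containment criterion for Young diagrams) — is the transition from ``$E_2^{(-q^{-1},\alpha_2)}$ represents a one-dimensional block of $F\cH{2}$'' to its action on the $\cH{2}$-restriction of an arbitrary simple $F\cH{n}$-module, together with the assertion that quotienting a semisimple algebra by the ideal generated by a central idempotent of a subalgebra removes precisely the simple modules whose restriction contains that block. This is exactly the argument outlined just before the statement, and it is where semisimplicity of $F\cH{n}$ and the transitivity of the branching graph are used.
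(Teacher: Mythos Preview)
Your proposal is correct and follows exactly the approach of the paper: the paragraph you say you are ``merely formalising'' is in fact the paper's entire proof, stated just before the proposition. You have simply spelled out in more detail the three ingredients the paper uses --- that $E_2^{(-q^{-1},\alpha_2)}$ is a nonzero scalar times the minimal central idempotent of $F\cH{2}$ attached to $V_{(\emptyset,(1,1))}$, that in a semisimple algebra the two-sided ideal it generates is the sum of the blocks whose restriction to $\cH{2}$ contains this one-dimensional summand, and that by the branching rules this happens exactly when $\ell(\mu)\geq 2$ --- so there is nothing to add.
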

The Bratteli diagram for the algebras $\cA_n$ is obtained from the Bratteli diagram for the algebras $\cH{n}$ above, where all bipartitions with more than one row in the second component are removed:
\begin{center}
\begin{tikzpicture}[scale=0.22]
\node at (0,0) {$(\emptyset,\emptyset)$};
\node at (20,0) {$1$};

\draw[thick] (-0.5,-1.5) -- (-5.5,-3.5);
\draw[thick] (0.5,-1.5) -- (5.5,-3.5);

\node at (-8,-5) {$($};\diag{-7.5}{-4.5}{1};\node at (-5,-5) {$,\,\emptyset)$};
\node at (5,-5) {$(\emptyset\,,$};\diag{6.5}{-4.5}{1};\node at (8,-5) {$)$};
\node at (20,-5) {$2$};

\draw[thick] (-7.5,-6.5) -- (-17.5,-9.5);
\draw[thick] (-6,-6.5) -- (-8.5,-9.5);
\draw[thick] (-4.5,-6.5) -- (-1.5,-9.5);
\draw[thick] (4.5,-6.5) -- (1.5,-9.5);
\draw[thick] (6,-6.5) -- (8.5,-9.5);

\node at (-20,-11) {$($};\diag{-19.5}{-10.5}{2};\node at (-16,-11) {$,\,\emptyset)$};
\node at (-10,-11) {$($};\diagg{-9.5}{-10.5}{1}{1};\node at (-7,-11) {$,\,\emptyset)$};
\node at (-2,-11) {$($};\diag{-1.5}{-10.5}{1};\node at (0,-11) {$,$};\diag{0.5}{-10.5}{1};\node at (2,-11) {$)$};
\node at (7,-11) {$(\emptyset\,,$};\diag{8.5}{-10.5}{2};\node at (11,-11) {$)$};
\node at (20,-11) {$7$};

\draw[thick] (-19.5,-12.5) -- (-36.5,-15.5);
\draw[thick] (-17.5,-12.5) -- (-29,-15.5);
\draw[thick] (-15.5,-12.5) -- (-13,-15.5);

\draw[thick] (-10.2,-12.7) -- (-26,-15.5);
\draw[thick] (-7.6,-12.8) -- (-19.5,-15.5);
\draw[thick] (-6.5,-12.5) -- (-5.5,-15.5);

\draw[thick] (-1.5,-12.5) -- (-11,-15.5);
\draw[thick] (-0.5,-12.5) -- (-4,-15.5);
\draw[thick] (0.5,-12.5) -- (2,-15.5);

\draw[thick] (6.5,-12.5) -- (3.5,-15.5);
\draw[thick] (8.5,-12.5) -- (12,-15.5);

\node at (-39,-17) {$($};\diag{-38.5}{-16.5}{3};\node at (-34,-17) {$,\,\emptyset)$};
\node at (-30,-17) {$($};\diagg{-29.5}{-16.5}{2}{1};\node at (-26,-17) {$,\,\emptyset)$};
\node at (-22,-17) {$($};\diaggg{-21.5}{-16.5}{1}{1}{1};\node at (-19,-17) {$,\,\emptyset)$};
\node at (-15,-17) {$($};\diag{-14.5}{-16.5}{2};\node at (-12,-17) {$,$};\diag{-11.5}{-16.5}{1};\node at (-10,-17) {$)$};
\node at (-7,-17) {$($};\diagg{-6.5}{-16.5}{1}{1};\node at (-5,-17) {$,$};\diag{-4.5}{-16.5}{1};\node at (-3,-17) {$)$};
\node at (1,-17) {$($};\diag{1.5}{-16.5}{1};\node at (3,-17) {$,$};\diag{3.5}{-16.5}{2};\node at (6,-17) {$)$};
\node at (10,-17) {$(\emptyset\,,$};\diag{11.5}{-16.5}{3};\node at (15,-17) {$)$};
\node at (20,-17) {$34$};
\end{tikzpicture}
\end{center}
The dimension of $F\cA_n$ can be easily calculated, by summing the squares of the dimensions of the irreducible representations:
\[\dim(F\cA_n)=\sum_{i=0}^n\sum_{\lambda\vdash n-i}(\dim V_{(\lambda,(i))})^2=\sum_{i=0}^n\binom{n}{i}^2\sum_{\lambda\vdash n-i}d_{\lambda}^2=\sum_{i=0}^n\binom{n}{i}^2(n-i)!\ ,\]
where we first split the sum according to the size of the second partition $\mu$, which must be a single line of $i$ boxes, and then we use successively \eqref{dimbipart} and \eqref{dimfactn}. The dimensions for $n=0,1,2,3$ are written in the diagram above.

\begin{rem}\label{rem-semi}
The above description of the representations of $F\cA_n$ is also valid for many specialisations of the parameters $\alpha_1,\alpha_2,q$ in $\cA_n$, namely, those specialisations such that the algebra $\cH{n}$ is semisimple. This happens if $q^2$ is not a root of unity of order $e\leq n$ and $\alpha_1\neq \alpha_2q^{\pm2i}$ for $i=1,\dots,n-1$, see \cite{Ari}.
\end{rem}

\paragraph{An $R$-basis of $\cA_n$.} A word $b=b_1b_2\dots b_n$ of $B_n$ is said to be $\bar{1}\bar{2}$-avoiding if all barred numbers in $b$ appear in decreasing order, see \cite{Sim}. Put differently, $b$ avoids the pattern $\bar{1}\bar{2}$ if there are no two indices $1\leq i<j \leq n$ such that $b_{i}=\overline{m}_1$ and $b_{j}=\overline{m}_2$ with $0<m_{1}<m_{2}$. For instance, $35\bar{6}1\bar{4}\bar{2}$ is $\bar{1}\bar{2}$-avoiding in $B_{6}$ while $35\bar{4}1\bar{6}\bar{2}$ is not because of the subsequence $\bar{4}\bar{6}$. 

We will denote by $\avB$ the subset of all signed permutations in $B_n$ which are $\bar{1}\bar{2}$-avoiding. A word $b=b_1b_2\dots b_n$ corresponding to a permutation in $\avB$ can be written as follows: choose $i$ numbers in $\{1,2,\dots,n\}$ that will be barred, choose $i$ positions among $n$ to place these barred numbers in decreasing order in $b$, and then permute the remaining $n-i$ numbers in the remaining $n-i$ positions in $b$. It follows that:
\begin{equation}
	|\avB| = \sum_{i=0}^n (n-i)! \binom{n}{i}^2\ . \label{eq:cardavoid}
\end{equation} 

We are now ready to give a basis of $\cA_n$ over $R$. Thus, this also gives a basis for any specialisation of $q,\alpha_1,\alpha_2$ to non-zero complex numbers.
\begin{thm}\label{theo:basisA}
	The algebra $\cA_n$ is free over $R$ with basis given by the set of elements $g_\omega$ corresponding to $\bar{1}\bar{2}$-avoiding signed permutations, that is, the basis is:
	\begin{equation}
		\{ g_\omega \ | \ \omega \in \avB \}. \label{eq:basisA}
	\end{equation}
\end{thm}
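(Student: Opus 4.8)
The plan is to prove the theorem in two halves: a spanning statement and a linear independence statement, combining them to conclude freeness with the stated basis.

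\textbf{Spanning.} First I would show that the set $\{g_\omega \mid \omega \in \avB\}$ spans $\cA_n$ over $R$. Since the full collection $\{g_\omega \mid \omega \in B_n\}$ spans $\cH{n}$ and hence spans $\cA_n$, it suffices to rewrite, modulo the relation \eqref{eq:relquotient}, every $g_\omega$ with $\omega$ \emph{not} $\bar 1\bar 2$-avoiding as an $R$-combination of $\{g_\omega \mid \omega \in \avB\}$. The key point is that the defining relation \eqref{eq:relquotient} expresses $g_0g_1g_0g_1$ — which corresponds to the signed permutation containing the forbidden subword $\bar 1\bar 2$ in positions $1,2$ — in terms of shorter words and words with the barred entries still in decreasing order. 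I would make this precise by choosing a suitable order on words (for example by $(\ell_0(\omega),\ell(\omega))$ or by a lexicographic-type statistic measuring the "badness" of barred pairs), and run an induction: given a minimal reduced expression for a non-avoiding $\omega$, locate an occurrence of a $\bar 1\bar 2$ pattern, use the braid and commutation relations \eqref{eq:cycHrel1}--\eqref{eq:cycHrel3} together with the factorisation into $[n_i,m_i]$-blocks from \eqref{eq:basecH} to bring a $g_0g_1g_0g_1$ (or a conjugate thereof) to the surface, then apply \eqref{eq:relquotient} to trade it for strictly smaller terms. A cardinality bookkeeping using \eqref{eq:cardavoid} and the product-of-sets description \eqref{eq:basecH} will confirm we are removing exactly the right elements.

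\textbf{Linear independence.} For the lower bound I would argue that $\cA_n$ cannot be spanned by fewer than $|\avB|$ elements. The cleanest route is via the semisimple picture: over $F$, Proposition \ref{prop:repA} gives $\dim_F(F\cA_n) = \sum_{i=0}^n \binom{n}{i}^2(n-i)!$, which by \eqref{eq:cardavoid} equals $|\avB|$. Since $\cA_n$ is a quotient of $\cH{n}$, it is generated over $R$ by the images of any $R$-spanning set, and a spanning set of size $|\avB|$ whose image spans an $F$-vector space of dimension $|\avB|$ after tensoring with $F$ must be $R$-linearly independent (the spanning elements map to an $F$-basis, so any $R$-linear relation among them is an $F$-linear relation, hence trivial). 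This forces $\{g_\omega \mid \omega\in\avB\}$ to be an $R$-basis of $\cA_n$ and simultaneously shows $\cA_n$ is free of that rank, with no $R$-torsion.

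\textbf{Main obstacle.} The genuinely technical step is the spanning reduction: one must verify that repeatedly applying \eqref{eq:relquotient} terminates and never reintroduces forbidden patterns of equal or greater complexity, i.e.\ that the chosen complexity statistic strictly decreases. This requires a careful case analysis of how a $\bar 1\bar 2$ occurrence interacts with the block decomposition $[n_1,m_1]\cdots[n_k,m_k]$, and of what happens when the two offending barred entries are far apart in the word (so that one first has to use braid moves to bring them adjacent before \eqref{eq:relquotient} is applicable). I would isolate this as a combinatorial lemma about normal forms for words in $\avB$ — plausibly the reduced expressions already implicit in \cite{Sim} for $\bar 1\bar 2$-avoiding signed permutations — and then the algebraic rewriting follows the combinatorial template. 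Everything else (the dimension count, the free-module conclusion) is then routine.
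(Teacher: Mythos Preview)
Your overall strategy is exactly the paper's: spanning by rewriting non-avoiding words via \eqref{eq:relquotient}, then linear independence by matching the cardinality \eqref{eq:cardavoid} with $\dim_F(F\cA_n)$ from Proposition~\ref{prop:repA}. The independence half is identical to the paper's argument.

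Where you diverge is in the spanning step, and specifically in what you flag as the ``main obstacle''. You propose an ad hoc complexity statistic and a case analysis of how to bring a $g_0g_1g_0g_1$ to the surface when the two offending barred entries are far apart. The paper sidesteps all of this by invoking a known combinatorial fact (Stembridge, \cite[Lemma~2.1]{Stem}): a signed permutation $\omega$ fails to be $\bar 1\bar 2$-avoiding \emph{if and only if} some reduced expression for $\omega$ contains the subword $s_0s_1s_0s_1$. Since $g_\omega$ is independent of the choice of reduced expression, one simply picks such an expression, applies \eqref{eq:relquotient} to the subword $g_0g_1g_0g_1$, and observes that every term on the right-hand side has strictly smaller total length. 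Plain induction on $\ell(\omega)$ then terminates immediately --- no auxiliary ordering, no case analysis, no worry about reintroducing patterns. Your proposed lemma about ``normal forms implicit in \cite{Sim}'' is essentially this Stembridge characterization, but stated from the wrong side; once you have it, the rewriting you describe collapses to a one-line induction.
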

\begin{proof}
We first prove that the set \eqref{eq:basisA} is a spanning set:
\begin{equation}
		\cA_n=\rmspan_R\{ g_\omega \ | \ \omega \in \avB \}. \label{eq:spanA}
	\end{equation}
	Since $\cA_n$ is a quotient of $\cH{n}$, it is clearly linearly generated by the set of elements $g_\omega$ with $\omega \in B_n$. To show that this spanning set can be reduced to \eqref{eq:spanA}, we proceed by induction on the length of elements in $B_n$.
	
	Any signed permutation $\omega \in B_n$ of length $\ell(\omega) < 4$ is such that at most one number in $\{1,2,\dots,n\}$ is mapped to a barred number. Therefore, all signed permutations $\omega$ with $\ell(\omega) < 4$ are $\bar{1}\bar{2}$-avoiding and the associated elements $g_\omega$ belong to the spanning set \eqref{eq:spanA}.
	
Suppose now that all elements $g_\omega$ with $\ell(\omega)\leq m$, for some fixed integer $m \geq 4$, belong to the span of the set \eqref{eq:basisA}. Consider an element $g_\omega$ with $\ell(\omega)=m+1$ such that $\omega\notin \avB$. It follows by \cite[Lemma 2.1]{Stem} that $\omega$ must contain $s_0s_1s_0s_1$ in some reduced expression. This implies in turn that there is a reduced expression for $g_\omega$ that contains $g_0g_1g_0g_1$. Relation \eqref{eq:relquotient} can hence be used to express $g_\omega$ as a linear combination of terms of length less than $m+1$. By induction hypothesis, we therefore conclude that $g_\omega$ can be written in terms of the set \eqref{eq:basisA}.

Now, since the set (\ref{eq:basisA}) is a spanning set of $\cA_n$ over $R$,  it is also a spanning set of $F\cA_n$ over $F$. Moreover, the dimension of $F\cA_n$ over $F$ was calculated after Proposition \ref{prop:repA} and it coincides with the cardinality of the spanning set. Therefore, the set (\ref{eq:basisA}) is a basis of $F\cA_n$ and in particular is linearly independent over $F$. Thus it is also linearly independent over $R$ (since $R\subset F$). We conclude that the set (\ref{eq:basisA}) is an $R$-basis of $\cA_n$.
\end{proof}

\begin{rem}
The elements of $B_n$ which avoid the pattern $\bar{1}\bar{2}$ are in fact the same as the elements which do not contain $s_0s_1s_0s_1$ in any reduced expression, see \cite[Lemma 2.1]{Stem}.
\end{rem}

\subsection{Quasi-idempotents in $\cA_n$}\label{subsec-idemA}

Let $n\geq 2$. In $\cA_n$, the element $E_n^{(-q^{-1},\alpha_2)}$, which is proportional to $E_2^{(-q^{-1},\alpha_2)}$ is equal to $0$. Thus among the four central quasi-idempotents $E_n^{(x,\alpha_b)}$ of $\cH{n}$, only three remain non-zero in $\cA_n$:
\[E_n^{(-q^{-1},\alpha_1)}\ ,\ \ \ E_n^{(q,\alpha_2)}\,,\ \ \ E_n^{(q,\alpha_1)}\ .\]
In $\cH{n}$, no common factor appears in the coefficients of these elements. This will now be different in $\cA_n$, where a non-trivial common factor in the ring $R$ may sometimes be factored out. Thus we can define renormalised elements defined over $R$ by removing this common factor, and it will be important for later use to do so. In fact, only the two first elements in the list above factorise generically over the ring $R$, and this is the content of the following statement.
\begin{prop}\label{prop-idempAn}
Let $n\geq 2$. We have in $\cA_n$: 
\begin{align} 
& E_n^{(-q^{-1},\alpha_1)}=\alpha_2^{-1}\prod_{i=0}^{n-2}\left(1-\frac{\alpha_1}{\alpha_2}q^{-2i}\right)\tilde{E}_n^{(-q^{-1},\alpha_1)}\,,\label{eq:idemENrenorm1}\\
&E_{n}^{(q,\alpha_2)}=q^{\frac{n(n-1)}{2}}[n]_q!\tilde{E}_{n}^{(q,\alpha_2)}\,,\label{eq:idemENrenorm2}
\end{align}
where the renormalised elements are given by:
\begin{align} 
& \tilde{E}_n^{(-q^{-1},\alpha_1)}=\Lambda_n(g_1,\dots,g_{n-1})\cdot \bigl(\alpha_2+\alpha_1q^{-(n-2)}[n-1]_q-\sum_{i=0}^{n-1}(-1)^{i}q^{-i}g_0\dots g_i\bigr)\,,\label{eq:idemENtilde1}\\
& \tilde{E}_{n}^{(q,\alpha_2)}=\tilde{E}_{n-1}^{(q,\alpha_2)}\bigl((1-q^2)(1+q g_{n-1}+\dots +q^{n-2}g_{n-1}\dots g_2)+q^{n-1}g_{n-1}\dots g_1(1-\alpha_1^{-1}g_0)\bigr)\ ,\label{eq:idemENtilde2}
\end{align}
with the convention that $\tilde{E}_{1}^{(q,\alpha_2)}=E_1^{(q,\alpha_2)}=(1-\alpha_1^{-1}g_0)$.
\end{prop}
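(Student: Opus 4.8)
The plan is to work entirely inside $\cA_n$ (extended over $F$ when convenient), compute the scalar by which each $E_n^{(x,\alpha_b)}$ acts in its associated one-dimensional representation, and then exhibit an explicit element whose product with that scalar recovers $E_n^{(x,\alpha_b)}$. Concretely, for \eqref{eq:idemENrenorm1} I would start from the factorised form \eqref{eq:ELambda} of $E_n^{(-q^{-1},\alpha_1)}$, namely $\Lambda^{-q^{-1}}_n(g_1,\dots,g_{n-1})\cdot \prod_{i=n-1}^{0}(1-(-q^{-1})^{i}\alpha_2^{-1}g_0g_1\dots g_i)$, and simplify the rightmost product \emph{modulo the defining relation} \eqref{eq:relquotient} of $\cA_n$. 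The key observation is that in $\cA_n$ the long words $g_0g_1\dots g_i$ for $i\geq 2$ can be rewritten: relation \eqref{eq:relquotient} (equivalently $E_2^{(-q^{-1},\alpha_2)}=0$) lets one collapse the products $g_0g_1g_0g_1$, and combined with the braid and quadratic relations one can re-express each $g_0g_1\dots g_i$ times the $q$-antisymmetriser $\Lambda^{-q^{-1}}$ in a controlled way. Since $\Lambda^{-q^{-1}}_n(g_1,\dots,g_{n-1})g_j=-q^{-1}\Lambda^{-q^{-1}}_n(g_1,\dots,g_{n-1})$ for $j\geq 1$, multiplying the product on the left by $\Lambda^{-q^{-1}}_n$ simplifies drastically: every $g_i$ with $i\geq 1$ sitting between $\Lambda^{-q^{-1}}_n$ and a $g_0$ can be pushed past and replaced by the scalar $-q^{-1}$, until one is left with a linear combination of $\Lambda^{-q^{-1}}_n\cdot g_0g_1\dots g_i$ with explicit scalar coefficients — and I claim these collapse (again using \eqref{eq:relquotient} to kill $g_0g_1g_0$-type reappearances) to the bracket $\bigl(\alpha_2+\alpha_1q^{-(n-2)}[n-1]_q-\sum_{i=0}^{n-1}(-1)^iq^{-i}g_0\dots g_i\bigr)$ appearing in \eqref{eq:idemENtilde1}, up to the overall factor $\alpha_2^{-1}\prod_{i=0}^{n-2}(1-\tfrac{\alpha_1}{\alpha_2}q^{-2i})$. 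A clean way to organise this is induction on $n$: assume \eqref{eq:idemENrenorm1}–\eqref{eq:idemENtilde1} for $n-1$, use the recursion \eqref{eq:Erec} together with $\Lambda^{-q^{-1}}_n=\Lambda^{-q^{-1}}_{n-1}(1-q^{-1}g_{n-1}+\dots+(-q^{-1})^{n-1}g_{n-1}\dots g_1)$, and track how the scalar factor $\prod_{i=0}^{n-2}(1-\tfrac{\alpha_1}{\alpha_2}q^{-2i})$ grows by one term; the factor $1-\tfrac{\alpha_1}{\alpha_2}q^{-2(n-2)}$ should emerge precisely from commuting the new long word $g_{n-1}\dots g_1 g_0 g_1\dots g_{n-1}$ past the rest using \eqref{eq:relquotient}.

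For \eqref{eq:idemENrenorm2}–\eqref{eq:idemENtilde2} the situation is cleaner because only the $q$-symmetriser of the type-$A$ subalgebra is involved in the common factor. Here I would use the recursion \eqref{eq:Erec} for $E_n^{(q,\alpha_2)}$: it reads $E_n^{(q,\alpha_2)}=E_{n-1}^{(q,\alpha_2)}\bigl(1+\sum_{i=1}^{n-1}q^{n-i}g_{n-1}\dots g_i - q^{n-1}\alpha_1^{-1}g_{n-1}\dots g_1g_0(1+\sum_{i=1}^{n-1}q^ig_1\dots g_i)\bigr)$. The point is that $E_{n-1}^{(q,\alpha_2)}$ already contains the type-$A$ $q$-symmetriser of $H_{n-1}$ (by \eqref{eq:ELambda}), and that $q$-symmetriser absorbs the trailing factor $(1+\sum_{i=1}^{n-1}q^ig_1\dots g_i)$ up to the scalar $[n]_q!/[n-1]_q!=[n]_q$ — more precisely $\Lambda^q_{n-1}(1+qg_1+\dots+q^{n-1}g_1\dots g_{n-1})$ is not quite $\Lambda^q_n$ but the relevant partial products evaluate to geometric-type scalars since $\Lambda^q g_j = q\Lambda^q$. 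After pulling out the inductively-assumed factor $q^{(n-1)(n-2)/2}[n-1]_q!$ from $E_{n-1}^{(q,\alpha_2)}$, one checks that the bracket in \eqref{eq:Erec} times $\tilde E_{n-1}^{(q,\alpha_2)}$ equals $q^{(n-1)}[n]_q\bigl((1-q^2)(1+qg_{n-1}+\dots+q^{n-2}g_{n-1}\dots g_2)+q^{n-1}g_{n-1}\dots g_1(1-\alpha_1^{-1}g_0)\bigr)/q^{\text{something}}$; matching powers of $q$ and the $[n]_q$ factor against the claimed $P_n$-type coefficient $q^{n(n-1)/2}[n]_q!$ pins down \eqref{eq:idemENtilde2}. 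The base case $n=1$ is the stated convention $\tilde E_1^{(q,\alpha_2)}=1-\alpha_1^{-1}g_0$, and one should also separately verify $n=2$ directly from the explicit formula for $E_2^{(q,\alpha_2)}$ to be safe.

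Throughout, I would keep in mind two consistency checks that also serve as partial proofs: (i) squaring. Since $\tilde E_n^{(x,\alpha_b)}$ is a nonzero scalar multiple of $E_n^{(x,\alpha_b)}$ in $\cA_n$, Proposition \ref{propE} forces $(\tilde E_n^{(x,\alpha_b)})^2 = \tilde P_n \tilde E_n^{(x,\alpha_b)}$ where $\tilde P_n$ is $P_n(x,\alpha_b)$ divided by the extracted scalar; computing $\tilde P_n$ by evaluating $\tilde E_n$ in the relevant one-dimensional representation (replace $g_0\mapsto\alpha_b$, $g_i\mapsto x$) gives a quick sanity check on the renormalisation factor. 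For \eqref{eq:idemENrenorm1}, evaluating $\tilde E_n^{(-q^{-1},\alpha_1)}$ at $g_0\mapsto\alpha_1$, $g_i\mapsto -q^{-1}$ should give something proportional to $\alpha_2-\alpha_1$ (one power of the boundary factor survives), consistent with the extracted $\prod_{i=0}^{n-2}(1-\tfrac{\alpha_1}{\alpha_2}q^{-2i})$ having removed the other $n-1$ factors of $P_n$. (ii) centrality: $\tilde E_n^{(x,\alpha_b)}g_i = x\tilde E_n^{(x,\alpha_b)}$ for $i\geq1$ and $\tilde E_n^{(x,\alpha_b)}g_0=\alpha_b\tilde E_n^{(x,\alpha_b)}$ should hold by construction, and verifying this directly on the explicit formulas \eqref{eq:idemENtilde1}–\eqref{eq:idemENtilde2} (using $\Lambda g_i = x\Lambda$ and the defining relation of $\cA_n$) is a reasonable independent route to the result. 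I expect the main obstacle to be the bookkeeping in \eqref{eq:idemENtilde1}: controlling exactly how the long words $g_0g_1\dots g_i$ interact with relation \eqref{eq:relquotient} after multiplication by the $q$-antisymmetriser, and confirming that no extra terms survive beyond the claimed closed form — this is where a careful induction, or alternatively a direct verification of the two eigenvalue relations plus a dimension/rank count, will be needed to close the argument rigorously.
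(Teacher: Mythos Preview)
Your plan for \eqref{eq:idemENrenorm1} is essentially the paper's own argument: induction on $n$, starting from the factorised form \eqref{eq:ELambda}, and using the $\cA_n$-relation \eqref{eq:relquotient} after multiplication by the $q$-antisymmetriser. The paper organises this slightly differently --- it first observes that $\Lambda^{-q^{-1}}_n$ contains $\Lambda^{-q^{-1}}_{n-1}(g_1,\dots,g_{n-2})$ as a right factor and slides it past $g_0g_1\dots g_{n-1}$ to apply the inductive hypothesis directly to $X_{n-1}^0$, then uses the relation in the form $(1-q^{-1}g_1)g_0g_1g_0=(1-q^{-1}g_1)(q\alpha_1^2-q\alpha_1g_0+\alpha_1g_0g_1)$ --- but your description is headed in the same direction and would close once you track the coefficients carefully.

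For \eqref{eq:idemENrenorm2} there is a genuine gap. You propose that the type-$A$ $q$-symmetriser inside $E_{n-1}^{(q,\alpha_2)}$ ``absorbs the trailing factor $(1+\sum_{i=1}^{n-1}q^ig_1\dots g_i)$'' up to a scalar. But this trailing factor lies on the \emph{right} of $g_{n-1}\dots g_1g_0$, so the symmetriser on the left cannot reach it without first passing through $g_0$. More fundamentally, the claimed factorisation is simply false in $\cH{n}$: already for $n=2$, $E_2^{(q,\alpha_2)}$ contains a $g_0g_1g_0g_1$ term while $(1+q^2)\tilde E_2^{(q,\alpha_2)}$ does not. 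So the defining relation of $\cA_n$ must enter, and your outline for part~2 never invokes it.

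The missing mechanism, which the paper uses, is not the $q$-symmetriser but rather the rightmost factor $(1-\alpha_1^{-1}g_0)$ of $E_{n-1}^{(q,\alpha_2)}$ (visible in \eqref{eq:ELambda}). The $\cA_n$-relation can be rewritten as
\[
(-\alpha_1^{-1})(1-\alpha_1^{-1}g_0)g_1g_0g_1=q(1-\alpha_1^{-1}g_0)(-q+g_1-\alpha_1^{-1}g_1g_0)\,,
\]
and since $g_0$ commutes with $g_{n-1},\dots,g_2$ one can bring $(1-\alpha_1^{-1}g_0)$ against the $g_1g_0g_1$ arising from the product of the recursion's middle block with the first term of the trailing sum. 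This is what kills the $g_0g_1g_0g_1$-type terms and produces the factor $q^{n-1}[n]_q$; only after this step do the absorptions $E_{n-1}^{(q,\alpha_2)}g_k=qE_{n-1}^{(q,\alpha_2)}$ (for $1\le k\le n-2$) finish the computation. Once you insert this ingredient, the rest of your inductive scheme goes through as in the paper.
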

\begin{proof}
\textbf{1.} First we prove \eqref{eq:idemENrenorm1}. Denote $x=-q^{-1}$ and
\[X_n^0=(1-x^{n-1}\alpha_2^{-1}g_0g_1\dots g_{n-1})(1-x^{n-2}\alpha_2^{-1}g_0g_1\dots g_{n-2})\ldots\ldots (1-\alpha_2^{-1}g_0)\ .\]
Recall from \eqref{eq:ELambda} that we have
\[
E_n^{(x,\alpha_1)}=\Lambda_n\cdot X_n^0\,, \ \ \ \ \text{where $\Lambda_n=\Lambda^x_n(g_1,\dots,g_{n-1})$.}
\]
We use induction on $n$. For $n=2$, the formula is easy to check by direct calculation. We write the definition of $E_2^{(x,\alpha_1)}$ and use the defining relation of $\cA_n$ to replace $g_0g_1g_0g_1$. Now, let $n\geq 3$ and write the above formula as
\begin{equation}\label{form-E-Xn0}
E_n^{(x,\alpha_1)}=\Lambda_n\cdot (1-x^{n-1}\alpha_2^{-1}g_0g_1\dots g_{n-1})X_{n-1}^0\,.
\end{equation}
Note that
\[\Lambda_n=A\cdot \Lambda^x_{n-1}(g_1,\dots,g_{n-2})=B\cdot \Lambda^x_{n-1}(g_2,\dots,g_{n-1})\ ,\ \ \ \text{for some $A,B\in\cH{n}$.}\]
Besides, $\Lambda^x_{n-1}(g_2,\dots,g_{n-1})g_0g_1\dots g_{n-1}=g_0g_1\dots g_{n-1}\Lambda^x_{n-1}(g_1,\dots,g_{n-2})$ using the braid relations. Therefore, in \eqref{form-E-Xn0}, we can bring $\Lambda^x_{n-1}(g_1,\dots,g_{n-2})$ in front of $X_{n-1}^0$ and thus use the induction hypothesis. At this point, we have:
\begin{equation}\label{form-E-Xn0bis}
E_n^{(x,\alpha_1)}=\gamma_{n-1}\Lambda_n\cdot (1-x^{n-1}\alpha_2^{-1}g_0g_1\dots g_{n-1})\bigl(\alpha_2+\alpha_1q^{-(n-3)}[n-2]_q-\sum_{i=0}^{n-2}x^ig_0\dots g_i\bigr)\,,
\end{equation}
with the coefficient $\gamma_{n-1}$ given by the induction hypothesis. Now we are going to use the defining relation of $\cA_n$ in the following form
\[(1-q^{-1}g_1)g_0g_1g_0=(1-q^{-1}g_1)(q\alpha_1^2-q\alpha_1g_0+\alpha_1g_0g_1)\ ,\]
and the fact that $\Lambda_n=C\cdot (1-q^{-1}g_1)$ for some $C\in \cH{n}$ to make the following calculation, recalling that $\Lambda_ng_k=x\Lambda_n$ for $k=1,\dots,n-1$,
\[\begin{array}{rl} & \displaystyle\Lambda_n\cdot g_0g_1\dots g_{n-1}\sum_{i=0}^{n-2}x^ig_0g_1\dots g_i\\
= & \displaystyle\Lambda_n\cdot (q\alpha_1^2-q\alpha_1g_0+\alpha_1g_0g_1)g_2\dots g_{n-1}\sum_{i=0}^{n-2}x^ig_1\dots g_i \\
= & \displaystyle\Lambda_n\cdot (x^{n-2}q\alpha_1^2-x^{n-2}q\alpha_1g_0+\alpha_1g_0g_1\dots g_{n-1})\sum_{i=0}^{n-2}x^ig_1\dots g_i \\[1.2em]
= & \displaystyle\Lambda_n\cdot \Bigl(q^{-(n-3)}x^{n-2}\alpha_1^2[n-1]_q+q^{-(n-2)}\alpha_1[n-1]_qg_0g_1\dots g_{n-1}\\
& \displaystyle \hspace{8cm} -x^{n-2}q\alpha_1 \sum_{i=0}^{n-2}x^ig_0g_1\dots g_i\Bigr)\ .
\end{array}\]
We have used the braid relations to move $g_1\dots g_i$ through $g_0g_1\dots g_{n-1}$ (getting $g_2\dots g_{i+1}$), and that $\sum_{i=0}^{n-2}x^ig_1\dots g_i=q^{-(n-2)}[n-1]_q$ when all $g$'s are replaced by $x=-q^{-1}$.

It remains only to use this formula in \eqref{form-E-Xn0bis} and to collect the various terms. Omitting $\gamma_{n-1}\Lambda_n$ which is in front of everyone, one finds directly that the coefficient in front of
$g_0g_1\dots g_i$ when $i<n-1$ is $-x^i(1-q^{-2(n-2)}\frac{\alpha_1}{\alpha_2})$. Then easy manipulations give that the coefficients in front of $1$ and in front of $g_0g_1\dots g_{n-1}$ are respectively,
\[\left(1-q^{-2(n-2)}\frac{\alpha_1}{\alpha_2}\right)(\alpha_2+\alpha_1q^{-(n-2)}[n-1]_q)\ \ \ \ \text{and}\ \ \ \ -\left(1-q^{-2(n-2)}\frac{\alpha_1}{\alpha_2}\right)x^{n-1}\ .\]
This concludes the verification of \eqref{eq:idemENrenorm1}.

\vskip .2cm
\textbf{2.} Now we prove \eqref{eq:idemENrenorm2} with similar methods, using induction on $n$. Once again, the case $n=2$ is directly verified using the explicit expression for $E_{2}^{(q,\alpha_2)}$ and the defining relation of $\cA_n$ to replace $g_0g_1g_0g_1$. For $n\geq 3$, the recursive formula \eqref{eq:Erec} allows to write
\begin{equation}
	E^{(q,\alpha_2)}_n = E^{(q,\alpha_2)}_{n-1}\Big(1+\sum_{i=1}^{n-1}q^{n-i}g_{n-1}\dots g_{i}-q^{n-1}\alpha_{1}^{-1}g_{n-1}\dots g_1g_0(1+\sum_{i=1}^{n-1}q^ig_1\dots g_i) \Big) \ . \label{eq:Eq2rec} 
\end{equation}  
Then, we use in \eqref{eq:Eq2rec} the defining relation of $\cA_n$, which can be rewritten as
\[(-\alpha_1^{-1})(1-\alpha_1^{-1}g_0)g_1g_0g_1=q(1-\alpha_1^{-1}g_0)(-q+g_1-\alpha_1^{-1}g_1g_0)\ ,\]
together with the fact that $E^{(q,\alpha_2)}_{n-1}=C\cdot (1-\alpha_1^{-1}g_0)$ for some $C \in \cH{n}$ to get
\begin{align}
E^{(q,\alpha_2)}_n = E^{(q,\alpha_2)}_{n-1}\Big(1&+\sum_{i=2}^{n-1}q^{n-i}g_{n-1}\dots g_{i}+q^{n-1}g_{n-1}\dots g_{1}(1-\alpha_1^{-1}g_0) \nonumber \\
&+q^{n+1}g_{n-1}\dots g_2 (-q+g_1-\alpha_1^{-1}g_1g_0)(1+\sum_{i=2}^{n-1}q^{i-1}g_2\dots g_i) \Big) \ . \label{eq:Eq2rec2}
\end{align}
Recalling that $E^{(q,\alpha_2)}_{n-1}g_k = q E^{(q,\alpha_2)}_{n-1}$ for $k=1,\dots, n-2$, we can use the braid relations and the Hecke relation to obtain:
\begin{align*}
&E^{(q,\alpha_2)}_{n-1} g_{n-1}\dots g_2(1+\sum_{i=2}^{n-1}q^{i-1}g_2\dots g_i) = E^{(q,\alpha_2)}_{n-1} q^{n-2} ( 1 +\sum_{i=2}^{n-1}q^{n-i}g_{n-1}\dots g_{i}) \ , \\
&E^{(q,\alpha_2)}_{n-1}g_{n-1}\dots g_2 g_1(1-\alpha_1^{-1}g_0)(1+\sum_{i=2}^{n-1}q^{i-1}g_2\dots g_i) = E^{(q,\alpha_2)}_{n-1}q^{n-2}[n-1]_qg_{n-1}\dots g_2 g_1(1-\alpha_1^{-1}g_0)  \ . 
\end{align*}
Replacing these results in \eqref{eq:Eq2rec2} and combining terms together, it is found that
\begin{align*}
E^{(q,\alpha_2)}_n = E^{(q,\alpha_2)}_{n-1}q^{n-1}[n]_q\Big((1-q^2)(1+\sum_{i=2}^{n-1}q^{n-i}g_{n-1}\dots g_{i})+q^{n-1}g_{n-1}\dots g_{1}(1-\alpha_1^{-1}g_0)\Big)  \ .
\end{align*}
The proof is completed by using the induction hypothesis on $E^{(q,\alpha_2)}_{n-1}$.
\end{proof}

\section{The one-boundary Temperley--Lieb algebra and its $gl_N$-generalisations}\label{sec:C}

Let $N\geq 2$. In this section we define the symmetric one-boundary $N$-centraliser algebras $\cC_{n,N}$ as quotients of the algebra $\cA_n$. The meaning of this definition will be clear from the point of view of representation theory, and will result in a natural description of the semisimple representation theory of $\cC_{n,N}$. Besides, our motivation and the origin of the terminology comes from the use we will make of the algebras $\cC_{n,N}$ to describe $U_q(gl_N)$-centralisers in Section \ref{sec-cent}. 

We will then study in details the case $N=2$, showing that we recover the generic 3-parameter one-boundary Temperley--Lieb algebra, for which we will describe a basis using the signed permutations from the preceding section.

\subsection{Definition}

Recall that we have defined in \eqref{eq:symHecke} the $q$-antisymmetriser $\Lambda^{-q^{-1}}_{N+1}(g_1,\dots,g_N)$ of the usual Hecke algebra generated by $g_1,\dots,g_N$. Recall also that we have from Section \ref{subsec-idemA} the following element:
\begin{equation} 
\tilde{E}_n^{(-q^{-1},\alpha_1)}=\Lambda_n(g_1,\dots,g_{n-1})\cdot \bigl(\alpha_2+\alpha_1q^{-(n-2)}[n-1]_q-\sum_{i=0}^{n-1}(-1)^{i}q^{-i}g_0\dots g_i\bigr)\,,\label{eq:idemENtilde1_bis}
\end{equation}
which is a renormalisation in $\cA_n$ of the quasi-idempotent $\tilde{E}_N^{(-q^{-1},\alpha_1)}$. We propose the following definition.
\begin{defi}\label{defCnN}
We define the symmetric one-boundary $N$-centraliser algebra $\cC_{n,N}$ to be the quotient of the algebra $\cA_n$ by the relations:
\begin{align}
&\tilde{E}_N^{(-q^{-1},\alpha_1)}=0\,, \label{eq:relquotient2N}\\
&\Lambda^{-q^{-1}}_{N+1}(g_1,\dots,g_N)=0\,. \label{eq:relquotient3N} 
\end{align}
It is understood that $\cC_{n,N}=\cA_n$ when $n<N$.
\end{defi}

\paragraph{Semisimple representation theory.} In this paragraph, we extend the algebra $\cC_{n,N}$ over the field of fractions $F$ and denote it $F\cC_{n,N}$. The description below is also valid for specialisations of the parameters satisfying the semisimplicity conditions for $\cH{n}$ in Remark \ref{rem-semi}.

\begin{prop}\label{prop:repCN}
The algebra $F\cC_{n,N}$ is semisimple with the following set of irreducible representations:
\[Irr(F\cC_{n,N})=\{V_{(\lambda,\mu)}\ |\ (\lambda,\mu)\in Par_2(n)\ \text{with}\ \ell(\lambda)\leq N-1\ \text{and}\ \ell(\mu)\leq 1\}\ .\]
\end{prop}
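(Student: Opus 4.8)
The plan is to follow exactly the same strategy that proved Proposition \ref{prop:repA} for $F\cA_n$, now applied to the two further relations \eqref{eq:relquotient2N} and \eqref{eq:relquotient3N} that define $F\cC_{n,N}$. First I would recall from the discussion after Proposition \ref{propE} the general principle: if $E$ is a central element of $F\cH{m}$ proportional to the minimal central idempotent of a one-dimensional representation $\rho$, then, viewed inside $F\cH{n}$ for $n\geq m$, it acts as a nonzero scalar on an irreducible $V_{(\lambda,\mu)}$ precisely when the restriction of $V_{(\lambda,\mu)}$ to $F\cH{m}$ contains $\rho$, and as zero otherwise; hence quotienting by $E=0$ kills exactly those $V_{(\lambda,\mu)}$ whose restriction to $\cH{m}$ contains $\rho$. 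The same statement holds verbatim after we pass to the quotient $F\cA_n$, since $\tilde E_N^{(-q^{-1},\alpha_1)}$ is (by Proposition \ref{prop-idempAn}) a nonzero scalar multiple of $E_N^{(-q^{-1},\alpha_1)}$, which remains a nonzero central quasi-idempotent of $F\cA_n$ (its associated one-dimensional representation $(\Box^{\downarrow},\emptyset)$ — the single column — still survives in $F\cA_n$, as $\ell(\mu)=0<2$). Likewise $\Lambda^{-q^{-1}}_{N+1}(g_1,\dots,g_N)$ is the usual $q$-antisymmetriser of the type-$A$ Hecke subalgebra $H_{N+1}\subset\cH{n}$, proportional to the minimal central idempotent of the sign representation of $H_{N+1}$.

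Next I would identify the two combinatorial conditions these impose. Quotienting $F\cA_n$ by $\tilde E_N^{(-q^{-1},\alpha_1)}=0$: the one-dimensional representation of $\cH{N}$ attached to $(-q^{-1},\alpha_1)$ is the one with Young diagram a single column of height $N$ (second component empty). By the branching rules (inclusion of Young diagrams, as in the Bratteli diagram), $V_{(\lambda,\mu)}$ restricted to $\cH{N}$ contains this column-of-$N$ representation iff the diagram of $(\lambda,\mu)$ contains a standard filling whose restriction to $\{1,\dots,N\}$ is that column, which happens iff $\ell(\lambda)\geq N$. So this relation removes exactly the $V_{(\lambda,\mu)}$ with $\ell(\lambda)\geq N$, leaving those with $\ell(\lambda)\leq N-1$. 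Quotienting further by $\Lambda^{-q^{-1}}_{N+1}(g_1,\dots,g_N)=0$: by the classical fact for the $q$-antisymmetriser in $H_{N+1}$ (the usual $U_q(gl_N)$ Schur–Weyl story), $V_{(\lambda,\mu)}$ restricted to $H_{N+1}$ contains the sign representation iff $(\lambda,\mu)$ admits a column of length $N+1$ somewhere in its two components, i.e. iff $\ell(\lambda)\geq N+1$ or $\ell(\mu)\geq N+1$. Given $N\geq 2$ and the constraint already in force in $F\cA_n$ that $\ell(\mu)\leq 1\leq N$, the second alternative never occurs, and the first is already excluded by $\ell(\lambda)\leq N-1$; so this relation removes nothing beyond what we have, but it is needed for the identification with the genuine centraliser in Section \ref{sec-cent} and must be checked not to over-kill — which the computation just made confirms. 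Combining, the surviving irreducibles of $F\cC_{n,N}$ are exactly the $V_{(\lambda,\mu)}$ with $\ell(\lambda)\leq N-1$ and $\ell(\mu)\leq 1$, and since a quotient of a semisimple algebra by (sums of) minimal central idempotents is again semisimple, $F\cC_{n,N}$ is semisimple with this index set.

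I would present the argument in two short steps mirroring the two relations, citing the post-Proposition \ref{propE} discussion for the quotient-by-quasi-idempotent mechanism and, for the branching analysis, the Bratteli diagram of $\cH{n}$ displayed in Section \ref{sec-A} together with the standard description of $q$-(anti)symmetrisers. The one point requiring a little care — the part most likely to be the real content — is verifying that $\tilde E_N^{(-q^{-1},\alpha_1)}$ genuinely remains a nonzero scalar multiple of the minimal central idempotent for the column-of-$N$ representation \emph{inside} $F\cA_n$: one must check the renormalisation scalar $\alpha_2^{-1}\prod_{i=0}^{N-2}(1-\tfrac{\alpha_1}{\alpha_2}q^{-2i})$ from \eqref{eq:idemENrenorm1} is invertible in $F$ (it is, being a nonzero element of $R$), and that the column-of-$N$ one-dimensional representation is not among those already removed in passing to $F\cA_n$ (it is not, as noted). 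Everything else is a transcription of the $F\cA_n$ argument. I expect no serious obstacle; the branching/containment bookkeeping for two nested idempotents at levels $N$ and $N+1$ is the only place where one could slip, and the $N\geq 2$, $\ell(\mu)\leq 1$ constraints make it clean.
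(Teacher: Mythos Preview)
Your overall strategy matches the paper's: use the quasi-idempotent mechanism to remove the representations with $\ell(\lambda)\geq N$ (on top of $\ell(\mu)\geq 2$ already gone in $F\cA_n$), then verify that the $q$-antisymmetriser relation \eqref{eq:relquotient3N} is automatically satisfied on what survives. The first half is fine.

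The gap is in your treatment of the antisymmetriser. Your claim that $\Lambda^{-q^{-1}}_{N+1}(g_1,\dots,g_N)$ is nonzero on $V_{(\lambda,\mu)}$ ``iff $(\lambda,\mu)$ admits a column of length $N+1$ somewhere in its two components, i.e.\ iff $\ell(\lambda)\geq N+1$ or $\ell(\mu)\geq N+1$'' is false. The element lives in the type-$A$ subalgebra $H_{N+1}\subset\cH{N+1}$, and the restriction of a type-$B$ irreducible $V_{(\lambda',\mu')}$ of $F\cH{N+1}$ to $FH_{N+1}$ is governed by Littlewood--Richardson coefficients, not by inspecting each component separately. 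The sign representation $(1^{N+1})$ of $H_{N+1}$ appears in $V_{(\lambda,\mu)}$ iff $c^{(1^{N+1})}_{\lambda',\mu'}\neq 0$ for some sub-bipartition $(\lambda',\mu')\subseteq(\lambda,\mu)$ of size $N+1$; since $(1^{N+1})$ is a single column this forces both $\lambda'$ and $\mu'$ to be columns, and the condition becomes $\ell(\lambda)+\ell(\mu)\geq N+1$. For instance $\lambda=(1^N)$, $\mu=(1)$ gives a representation on which the antisymmetriser does \emph{not} vanish, contradicting your criterion. Your conclusion is nonetheless correct, because $\ell(\lambda)\leq N-1$ and $\ell(\mu)\leq 1$ still yield $\ell(\lambda)+\ell(\mu)\leq N<N+1$; but the justification must go through the LR (here, Pieri) rule, which is exactly what the paper invokes.
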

\begin{proof}
In the algebra $\cC_{n,N}$, we have:
\[E_N^{(-q^{-1},\alpha_1)}=0\ \ \ \ \ \text{and}\ \ \ \ \ E_2^{(-q^{-1},\alpha_2)}=0\ .\]
Reproducing the same reasoning as in the preceding section before Proposition \ref{prop:repA}, we find that cancelling these two elements kills the irreducible representation $V_{(\lambda,\mu)}$ of $F\cH{n}$ if and only if either $\lambda$ has strictly more than $N-1$ rows or $\mu$ has strictly more than one row.

It remains to argue that, in the remaining irreducible representations the last relation \eqref{eq:relquotient3N} cancelling the $q$-antisymmetriser in $N$ generators $g_1,\dots,g_{N}$ is satisfied. This can be checked rather directly, using the explicit description of the irreducible representations of $\cH{n}$ \cite{AK,Ho,OP11} and using the same sort of methods than those used in \cite{CP} for the $q$-symmetriser.

Otherwise, note that the claim is equivalent to the fact that the one-dimensional representation of $FH_{N+1}$ given by $g_1,\dots g_{N}\mapsto -q^{-1}$ (that is, indexed by a one-column partition) does not appear when we restrict to $FH_{N+1}$ the irreducible representations of $F\cH{N+1}$ indexed by bipartitions $(\lambda,\mu)$ with $\ell(\lambda)\leq N-1$ and $\ell(\mu)\leq 1$. These restrictions are expressed in terms of Littlewood--Richardson coefficients (see for example \cite{ChP,Ram}). This implies in particular the easy claim above since, by what is called the Pieri rule, the Littlewood--Richardson coefficient $d_{\lambda,\mu}^{\nu}$ is $0$ with $\lambda$ and $\mu$ as above and $\nu$ the one-column partition of length $N+1$.
\end{proof}

The Bratteli diagram for the algebras $F\cC_{n,N}$ is thus obtained from the Bratteli diagram for the algebras $F\cA_n$ given before, removing all bipartitions with $N$ rows or more in the first component. The dimension of $F\cC_{n,N}$ is then calculated as follows:
\[\dim(F\cC_{n,N})=\sum_{i=0}^n\binom{n}{i}^2\sum_{\substack{\lambda\vdash i \\\ell(\lambda)<N}}d_{\lambda}^2\ .\]
For $N=2$, the algebra $F\cC_{n,2}$ will be studied in details in the next subsection.

For $N=3$, the sum of the squares of $d_{\lambda}$'s in the formula above is the Catalan number $\frac{1}{i+1}\binom{2i}{i}$. Moreover, the series of dimensions start with $1,2,7,33,183$ and is the series labelled A086618 in \cite{OEIS}. We note that, similarly to the situation $N=2$ discussed in Remark \ref{rc321avoiding}, the dimension of $F\cC_{n,3}$ is the number of signed permutations of $\{-n,\dots,-1,1,\dots,n\}$ which are 4321-avoiding \cite{Eg}.

In general, we may ask whether the algebra $\cC_{n,N}$ is free over the ring $R$ and we may look for a basis indexed by a natural subset of signed permutations, for example those avoiding certain patterns. This is what we are going to do in details for $N=2$ in the next subsection.

\begin{rem}
The representation theory shows that over $F$ the relation \eqref{eq:relquotient3N} is actually implied by the others. This is not true over $R$ but this is also true in any specialisation such that the algebra $\cH{n}$ is semisimple (Remark \ref{rem-semi}).
\end{rem}

\subsection{The case $N=2$ (the Temperley--Lieb situation)}

The definition of $\cC_{n,N}$ can be written slightly differently and more explicitly when $N=2$. 
\begin{prop}\label{propCn2}
The algebra $\cC_{n,2}$ is the quotient of the algebra $\cH{n}$ by the relations:
\begin{align}
&g_1g_0g_1 = q(\alpha_1+\alpha_2)(q-g_1)-q^2g_0 +q(g_0g_1+g_1g_0)\,, \label{eq:relquotient2}\\
&g_ig_{i+1}g_i = q^3-q^2(g_i+g_{i+1})+q(g_ig_{i+1}+g_{i+1}g_i)\ \ \ \ \ \ i=1,\dots,n-2\,. \label{eq:relquotient3} 
\end{align}
\end{prop}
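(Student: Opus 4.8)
The plan is to show that $\cC_{n,2}$, defined as the quotient of $\cA_n$ by $\tilde{E}_2^{(-q^{-1},\alpha_1)}=0$ and $\Lambda^{-q^{-1}}_3(g_1,g_2)=0$, coincides with the quotient of $\cH{n}$ by the two relations \eqref{eq:relquotient2} and \eqref{eq:relquotient3}. Since by definition $\cA_n$ is $\cH{n}$ modulo $E_2^{(-q^{-1},\alpha_2)}=0$, it suffices to check that, inside $\cH{n}$, the ideal generated by the three elements $E_2^{(-q^{-1},\alpha_2)}$, $\tilde{E}_2^{(-q^{-1},\alpha_1)}$, $\Lambda^{-q^{-1}}_3(g_1,g_2)$ equals the ideal generated by the two elements displayed in \eqref{eq:relquotient2}, \eqref{eq:relquotient3}. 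Relation \eqref{eq:relquotient3} is immediate: it is exactly $\Lambda^{-q^{-1}}_3(g_i,g_{i+1})=0$ rewritten, and all these are conjugate/translate of $\Lambda^{-q^{-1}}_3(g_1,g_2)$ via the braid relations on the $g_j$ (or simply: the $q$-antisymmetriser on three consecutive strands vanishes for every $i$ as soon as it vanishes for one $i$, because the relations \eqref{eq:cycHrel1}, \eqref{eq:cycHrel3} are symmetric under shifting indices in type A). So the real content is the first relation.

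For \eqref{eq:relquotient2}: I would take $n=2$ and compute, in $\cH{2}$ (the type $B_2$ Hecke algebra), the two elements $E_2^{(-q^{-1},\alpha_2)}$ and $\tilde{E}_2^{(-q^{-1},\alpha_1)}$ explicitly from the formulas already in the paper. From the displayed expansion of $E_2^{(x,\alpha_b)}$ with $x=-q^{-1}$, $b=2$ (so $\alpha_{b+1}=\alpha_1$) one gets $E_2^{(-q^{-1},\alpha_2)}$, whose vanishing gives \eqref{eq:relquotient} (already stated). From \eqref{eq:idemENtilde1} with $n=2$ one gets $\tilde{E}_2^{(-q^{-1},\alpha_1)}=\Lambda_2(g_1)\cdot\bigl(\alpha_2+\alpha_1 q^0[1]_q-(g_0 - q^{-1}g_0g_1)\bigr)=(1-q^{-1}g_1)(\alpha_2+\alpha_1-g_0+q^{-1}g_0g_1)$, using $\Lambda_2(g_1)=1-q^{-1}g_1$ and $[1]_q=1$. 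Setting this to zero, together with the relation \eqref{eq:relquotient} defining $\cA_2$, I expand and simplify: the idea is that modulo \eqref{eq:relquotient} (which lets me eliminate $g_0g_1g_0g_1$, and also its consequences like reducing any word of type $B$ to the $\bar{1}\bar{2}$-avoiding span from Theorem \ref{theo:basisA}) the relation $\tilde{E}_2^{(-q^{-1},\alpha_1)}=0$ becomes equivalent to \eqref{eq:relquotient2}. Concretely I would multiply out $(1-q^{-1}g_1)(\alpha_2+\alpha_1-g_0+q^{-1}g_0g_1)$, collect the coefficient of the "top" word, and solve for $g_1g_0g_1$; some care is needed because $\tilde{E}_2^{(-q^{-1},\alpha_1)}$ contains $g_1g_0g_1$ only after moving $g_1$ past $g_0g_1$, i.e.\ one uses the $B_2$ braid relation $g_0g_1g_0g_1=g_1g_0g_1g_0$ plus \eqref{eq:relquotient} to rewrite. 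The upshot should be precisely $g_1g_0g_1 = q(\alpha_1+\alpha_2)(q-g_1)-q^2 g_0+q(g_0g_1+g_1g_0)$.

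Finally I would argue the converse inclusion of ideals: that \eqref{eq:relquotient2} together with \eqref{eq:relquotient3} imply both defining relations of $\cC_{n,2}$. That \eqref{eq:relquotient3} gives $\Lambda^{-q^{-1}}_{3}(g_1,g_2)=0$ and more generally the vanishing of every $\Lambda^{-q^{-1}}_3(g_i,g_{i+1})$ is clear. For the other direction one checks that \eqref{eq:relquotient2} implies \eqref{eq:relquotient}, i.e.\ $E_2^{(-q^{-1},\alpha_2)}=0$: expand \eqref{eq:relquotient}'s right-hand side and use \eqref{eq:relquotient2} to substitute $g_1g_0g_1$ and the $B_2$ braid relation to handle $g_0g_1g_0$ (write $g_0g_1g_0 = $ something obtained by right/left-multiplying \eqref{eq:relquotient2} by $g_0$ and using the quadratic relation for $g_0$). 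This is a finite computation in $\cH{2}$. Likewise \eqref{eq:relquotient2} plus the braid relation should recover $\tilde{E}_2^{(-q^{-1},\alpha_1)}=0$. Once both ideals are shown equal in $\cH{2}$, and hence (by the translation-invariance remark) in $\cH{n}$, the proposition follows: $\cC_{n,2}=\cH{n}/(\text{those relations})$.

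The main obstacle I anticipate is purely bookkeeping: reconciling the "renormalised" quasi-idempotent $\tilde{E}_2^{(-q^{-1},\alpha_1)}$ (which equals $\alpha_2^{-1}(1-\tfrac{\alpha_1}{\alpha_2})^{-1}$-scaled... wait, rather $E_2^{(-q^{-1},\alpha_1)}=\alpha_2^{-1}(1-\tfrac{\alpha_1}{\alpha_2})\tilde{E}_2^{(-q^{-1},\alpha_1)}$ by \eqref{eq:idemENrenorm1}) with the naive expansion, and making sure one consistently works modulo $E_2^{(-q^{-1},\alpha_2)}=0$ while reducing $B_2$-words — i.e.\ keeping track that the simplification genuinely lands in the span of $\{1,g_0,g_1,g_0g_1,g_1g_0,g_1g_0g_1,g_0g_1g_0\}$ (the $\bar{1}\bar{2}$-avoiding words of $B_2$) before comparing coefficients. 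Since everything happens in the small algebra $\cH{2}$ this is entirely routine but error-prone; I would organize it by picking the basis \eqref{eq:basisA} for $n=2$ and equating coefficients there.
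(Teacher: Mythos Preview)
Your approach is essentially the paper's: identify \eqref{eq:relquotient3} with $\Lambda^{-q^{-1}}_3(g_1,g_2)=0$ (plus its translates), identify \eqref{eq:relquotient2} with $\tilde{E}_2^{(-q^{-1},\alpha_1)}=0$, and then check that \eqref{eq:relquotient2} already implies the $\cA_n$-relation \eqref{eq:relquotient}, so that the two presentations of $\cC_{n,2}$ agree. The paper does the last step exactly as you suggest, by multiplying \eqref{eq:relquotient2} on either side by $g_0$ and using the quadratic relation for $g_0$.

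The one place you overcomplicate things is the forward direction. You write that ``$\tilde{E}_2^{(-q^{-1},\alpha_1)}$ contains $g_1g_0g_1$ only after moving $g_1$ past $g_0g_1$'' and that one must work modulo \eqref{eq:relquotient} to extract \eqref{eq:relquotient2}. This is not so: simply expanding $(1-q^{-1}g_1)(\alpha_2+\alpha_1-g_0+q^{-1}g_0g_1)$ term by term produces $-q^{-2}g_1g_0g_1$ directly (from $-q^{-1}g_1\cdot q^{-1}g_0g_1$), with no braid relation and no use of \eqref{eq:relquotient}. Multiplying through by $-q^2$ gives \eqref{eq:relquotient2} on the nose, as an identity already in $\cH{2}$. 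So $\tilde{E}_2^{(-q^{-1},\alpha_1)}=0$ and \eqref{eq:relquotient2} are literally the same relation in $\cH{n}$, and your anticipated ``bookkeeping obstacle'' of reducing to the $\bar1\bar2$-avoiding basis does not arise. Once you see this, the equality of ideals is immediate: \eqref{eq:relquotient2} is $\tilde{E}_2^{(-q^{-1},\alpha_1)}=0$, \eqref{eq:relquotient3} is $\Lambda^{-q^{-1}}_3=0$, and \eqref{eq:relquotient2} $\Rightarrow$ \eqref{eq:relquotient} as you outlined.
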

\begin{proof}
The first relation is obtained easily by writing explicitly $\tilde{E}_2^{(-q^{-1},\alpha_1)}=0$ using Proposition \ref{prop-idempAn}. The second relation for $i=1$ is $\Lambda^{-q^{-1}}_3(g_1,g_2)=0$. It implies, by suitable conjugation, the second relations for any $i\geq 1$. The last statement is that \eqref{eq:relquotient2} implies the defining relation of $\cA_n$:
\[g_0g_1g_0g_1=-q^{2}\alpha_1^{2}+q\alpha_1^{2}g_1+q^{2}\alpha_1g_0-q\alpha_1(g_1g_0+g_0g_1) +\alpha_1 g_1g_0g_1+qg_0g_1g_0\ .\]
This is easy to see since multiplying \eqref{eq:relquotient2} on the left or on the right by $g_0$ gives 
\begin{equation}
	g_0g_1g_0g_1 = g_1g_0g_1g_0 = q\alpha_1\alpha_2(q-g_1)+q g_0g_1g_0\ , \label{eq:relquotient32}
\end{equation}
which, combined with \eqref{eq:relquotient2}, produces the desired relation.
\end{proof}

\paragraph{A Temperley--Lieb presentation.} We make the slight change of generators as follows:
\begin{equation}
	e_0:=\alpha_2-g_0, \quad e_i:=q-g_i, \quad i=1,2,\dots,n-1.
\end{equation}
Then it is an easy exercice to check that the algebra $\cC_{n,2}$ can be equivalently presented as generated by $e_i$ for $i=0,1,\dots,n-1$ with the following defining relations:
\begin{alignat}{2}
	&e_i^2=(q+q^{-1})e_i, && \quad 1\leq i \leq n-1, \label{eq:TLrel1}\\
	&e_0^2=(\alpha_2-\alpha_1)e_0, && \label{eq:TLrel2} \\
	&e_ie_j=e_je_i, && \quad |i-j|\geq 2, \label{eq:TLrel3}\\
	&e_ie_{i\pm 1}e_i=e_i, &&\quad 1\leq i,i\pm 1 \leq n-1, \label{eq:TLrel4} \\	
	&e_1e_0e_1 = (q^{-1}\alpha_2-q\alpha_1)e_1. && \label{eq:TLrel5}	
\end{alignat}
We recover a three-parameter version of the one-boundary Temperley--Lieb algebra, or blob algebra, see for example \cite{tD,LS,MS,MRR,NRdG}.

\paragraph{Semisimple representation theory.}  We take $N=2$ in the Proposition \ref{prop:repCN}.
\begin{prop}\label{prop:repC2}
The algebra $F\cC_{n,2}$ is semisimple with the following set of irreducible representations:
\[Irr(F\cC_{n,2})=\{V_{(\lambda,\mu)}\ |\ (\lambda,\mu)\in Par_2(n)\ \text{and}\ \ell(\lambda),\ell(\mu)\leq 1\}\ .\]
\end{prop}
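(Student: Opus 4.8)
The plan is to obtain this as the special case $N=2$ of Proposition~\ref{prop:repCN}. Setting $N=2$ there, the condition $\ell(\lambda)\leq N-1$ becomes $\ell(\lambda)\leq 1$ while the condition $\ell(\mu)\leq 1$ is unchanged, which is exactly the claimed indexing set $\{V_{(\lambda,\mu)}\mid (\lambda,\mu)\in Par_2(n),\ \ell(\lambda),\ell(\mu)\leq 1\}$; semisimplicity over $F$ is literally the same assertion. So there is essentially nothing to prove beyond invoking the previous proposition, and no real obstacle arises.

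If one prefers a self-contained argument from the presentation in Proposition~\ref{propCn2}, one re-runs the reasoning used before Proposition~\ref{prop:repA}. In $F\cC_{n,2}$ both quasi-idempotents $E_2^{(-q^{-1},\alpha_1)}$ and $E_2^{(-q^{-1},\alpha_2)}$ vanish: the first because $\tilde{E}_2^{(-q^{-1},\alpha_1)}=0$ is imposed and $E_2^{(-q^{-1},\alpha_1)}$ is proportional to it over $F$ by Proposition~\ref{prop-idempAn}, the second because it is the defining relation of $\cA_n$. Since each $E_2^{(x,\alpha_b)}$ is a nonzero scalar multiple of the minimal central idempotent of $F\cH{2}$ attached to the one-dimensional representation with eigenvalues $(x,\alpha_b)$, quotienting by $E_2^{(-q^{-1},\alpha_1)}=0$ removes precisely the $V_{(\lambda,\mu)}$ whose restriction to $F\cH{2}$ contains $(-q^{-1},\alpha_1)$, that is those with $\ell(\lambda)\geq 2$, and $E_2^{(-q^{-1},\alpha_2)}=0$ removes those with $\ell(\mu)\geq 2$, reading the branching rules off the Bratteli diagram. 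Finally, on the surviving $V_{(\lambda,\mu)}$ with $\ell(\lambda),\ell(\mu)\leq 1$ the remaining relation $\Lambda^{-q^{-1}}_3(g_1,g_2)=0$ holds automatically: it amounts to the one-column partition of length $3$ not occurring in the restriction of the relevant $F\cH{3}$-modules to $FH_3$, which follows from the Pieri rule exactly as in the proof of Proposition~\ref{prop:repCN}. Either way the proposition follows, and the only point requiring care — that nothing further is killed and the last relation is consistent — is already covered by the general $N$ argument.
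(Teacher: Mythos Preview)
Your proposal is correct and takes essentially the same approach as the paper: the paper simply states ``We take $N=2$ in the Proposition~\ref{prop:repCN}'' and gives no separate proof, which is exactly your first paragraph. Your optional self-contained argument just unwinds the $N=2$ case of that proof and is also fine.
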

The Bratteli diagram for the algebras $F\cC_{n,2}$ is obtained from the Bratteli diagram for the algebras $\cH{n}$, where all bipartitions with more than one row in any component are removed. One finds the Pascal triangle, whose beginning is given here:
\begin{center}
\begin{tikzpicture}[scale=0.22]
\node at (0,0) {$(\emptyset,\emptyset)$};
\node at (20,0) {$1$};

\draw[thick] (-0.5,-1.5) -- (-5.5,-3.5);
\draw[thick] (0.5,-1.5) -- (5.5,-3.5);

\node at (-8,-5) {$($};\diag{-7.5}{-4.5}{1};\node at (-5,-5) {$,\,\emptyset)$};
\node at (5,-5) {$(\emptyset\,,$};\diag{6.5}{-4.5}{1};\node at (8,-5) {$)$};
\node at (20,-5) {$2$};

\draw[thick] (-6,-6.5) -- (-8.5,-9.5);
\draw[thick] (-4.5,-6.5) -- (-1.5,-9.5);
\draw[thick] (4.5,-6.5) -- (1.5,-9.5);
\draw[thick] (6,-6.5) -- (8.5,-9.5);

\node at (-11,-11) {$($};\diag{-10.5}{-10.5}{2};\node at (-7,-11) {$,\,\emptyset)$};
\node at (-2,-11) {$($};\diag{-1.5}{-10.5}{1};\node at (0,-11) {$,$};\diag{0.5}{-10.5}{1};\node at (2,-11) {$)$};
\node at (7,-11) {$(\emptyset\,,$};\diag{8.5}{-10.5}{2};\node at (11,-11) {$)$};
\node at (20,-11) {$6$};

\draw[thick] (-10,-12.5) -- (-12,-15.5);
\draw[thick] (-7,-12.5) -- (-5,-15.5);

\draw[thick] (-1,-12.5) -- (-2,-15.5);
\draw[thick] (1,-12.5) -- (2,-15.5);

\draw[thick] (7,-12.5) -- (5,-15.5);
\draw[thick] (10,-12.5) -- (12,-15.5);

\node at (-16,-17) {$($};\diag{-15.5}{-16.5}{3};\node at (-11,-17) {$,\,\emptyset)$};
\node at (-6,-17) {$($};\diag{-5.5}{-16.5}{2};\node at (-3,-17) {$,$};\diag{-2.5}{-16.5}{1};\node at (-1,-17) {$)$};
\node at (1,-17) {$($};\diag{1.5}{-16.5}{1};\node at (3,-17) {$,$};\diag{3.5}{-16.5}{2};\node at (6,-17) {$)$};
\node at (10,-17) {$(\emptyset\,,$};\diag{11.5}{-16.5}{3};\node at (15,-17) {$)$};
\node at (20,-17) {$20$};
\end{tikzpicture}
\end{center}
The dimension of $F\cC_{n,2}$ is then easily calculated:
\begin{equation}
\dim(F\cC_{n,2})=\sum_{i=0}^n\binom{n}{i}^2=\binom{2n}{n}\ . \label{eq:dimFCn2}
\end{equation}

\paragraph{An $R$-basis of $\cC_{n,2}$.} Now we produce an $R$-basis of $\cC_{n,2}$ in terms of signed permutations, and also explicitly in terms of the generators.

We consider the signed permutations with the following avoiding patterns: $(\pm1,-2)$ and $(\pm3,2,\pm1)$. This means that in the word $b_1b_2\dots b_n$ giving the signed permutations ($b_i\in\{\pm1,\dots,\pm n\}$ is the image of $i$), we never have:
\begin{itemize}
\item For $i<j$: $b_j<0$ and $|b_i|<|b_j|$ (in words, a negative number $b_j$ is never preceded by a smaller number when ignoring signs);
\item For $i<j<k$: $b_j>0$ and $|b_i|>b_j>|b_k|$ (in words, a positive number is never in the middle of a decreasing sequence of length 3, when ignoring signs).
\end{itemize}
We denote by $B_n(\bar{1}\bar{2},1\bar{2},321,\bar{3}21,32\bar{1},\bar{3}2\bar{1})$ the set of signed permutations with these avoiding patterns. These elements are called  fully commutative top elements in \cite{Stem}. For example, $3\bar{2}45\bar{1}$ is in this set for $n=5$, while $3\bar{2}\bar{5}41$ is not for three reasons: the subsequences $3\bar{5}$, $\bar{2}\bar{5}$ and $\bar{5}41$.

In terms of the standard basis elements \eqref{eq:basecH}, it is proved in  \cite[Corollary 5.6]{Stem} that the set $g_\omega$ with $\omega\in B_n(\bar{1}\bar{2},1\bar{2},321,\bar{3}21,32\bar{1},\bar{3}2\bar{1})$ corresponds to all elements of the form:
\begin{equation}\label{basis-cC2}
[n_1,m_1][n_2,m_2]\dots [n_r,m_r]\ \ \ \ \ \ \text{with}\ \left\{\begin{array}{l} 0\leq n_1<n_2<\dots<n_r\leq n-1\ \ \text{and}\ \ m_i\leq n_i\ ,\\[0.4em]
0=m_1=\dots=m_s<m_{s+1}<\dots<m_r\ .
\end{array}\right.
\end{equation}
The cardinality of the set $B_n(\bar{1}\bar{2},1\bar{2},321,\bar{3}21,32\bar{1},\bar{3}2\bar{1})$ has been calculated in \cite[Proposition 5.9]{Stem} or \cite[Appendix B]{MRR}, and it is found that:
\begin{equation}\label{eq:cardfullcommtop}
|B_n(\bar{1}\bar{2},1\bar{2},321,\bar{3}21,32\bar{1},\bar{3}2\bar{1})|=\binom{2n}{n}\ .
\end{equation}	
\begin{rem}\label{rc321avoiding}
An alternative description of $B_n(\bar{1}\bar{2},1\bar{2},321,\bar{3}21,32\bar{1},\bar{3}2\bar{1})$ is as follows. Recall that a signed permutation is in particular a permutation of $\{-n,\dots,-1,1,\dots,n\}$. For these permutations on $2n$ elements, there is the usual meaning of being $321$-avoiding (no strictly decreasing subsequence of length $3$ in the sequence of images $b_{-n}\dots b_{-1}b_1\dots b_n$). We leave to the reader to check that the set $B_n(\bar{1}\bar{2},1\bar{2},321,\bar{3}21,32\bar{1},\bar{3}2\bar{1})$ coincides with the set of signed permutations which are $321$-avoiding as permutations of $\{-n,\dots,-1,1,\dots,n\}$. See \cite{Eg} for a proof that this latter set is indeed of cardinal $\binom{2n}{n}$.
\end{rem}
\begin{thm}\label{theo:basisC2}
	The algebra $\cC_{n,2}$ is free over $R$ with basis consisting of elements in (\ref{basis-cC2}), that is, 
	\begin{equation}
		\{ g_\omega \ | \ \omega \in B_n(\bar{1}\bar{2},1\bar{2},321,\bar{3}21,32\bar{1},\bar{3}2\bar{1}) \}\ .\label{eq:basisC2}
	\end{equation}
\end{thm}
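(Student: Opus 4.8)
The plan is to mirror exactly the proof of Theorem \ref{theo:basisA}: first show the set \eqref{eq:basisC2} spans $\cC_{n,2}$ over $R$, then show that its cardinality equals $\dim_F(F\cC_{n,2})$, and conclude that it must be a basis over $F$ (hence linearly independent over $R\subset F$ and therefore an $R$-basis of $\cC_{n,2}$). The cardinality count is already done: by \eqref{eq:cardfullcommtop} the set $B_n(\bar{1}\bar{2},1\bar{2},321,\bar{3}21,32\bar{1},\bar{3}2\bar{1})$ has $\binom{2n}{n}$ elements, which by \eqref{eq:dimFCn2} equals $\dim(F\cC_{n,2})$. So the entire content of the proof is the spanning statement, and then the closing linear-algebra argument is verbatim the one at the end of the proof of Theorem \ref{theo:basisA}.

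For the spanning claim, I would start from the fact that $\cC_{n,2}$ is a quotient of $\cA_n$, hence already spanned over $R$ by $\{g_\omega \mid \omega\in\avB\}$ by Theorem \ref{theo:basisA}. It remains to reduce this $\bar{1}\bar{2}$-avoiding spanning set further to the fully commutative top elements. I would again induct on the length $\ell(\omega)$. If $\omega\in\avB$ but $\omega\notin B_n(\bar{1}\bar{2},1\bar{2},321,\bar{3}21,32\bar{1},\bar{3}2\bar{1})$, then $\omega$ contains one of the additional forbidden patterns, and by \cite[Lemma 2.1 / Corollary 5.6]{Stem} this is equivalent to saying that some reduced expression of $\omega$ contains a subword $s_is_{i+1}s_i$ (for some $1\le i\le n-2$) or the subword $s_1s_0s_1$. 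Correspondingly, $g_\omega$ has a reduced expression containing $g_ig_{i+1}g_i$ or $g_1g_0g_1$, and relation \eqref{eq:relquotient3} (resp.\ \eqref{eq:relquotient2}) of Proposition \ref{propCn2} rewrites that factor as a linear combination of strictly shorter terms; multiplying back through the surrounding reduced word, $g_\omega$ becomes an $R$-combination of $g_{\omega'}$ with $\ell(\omega')<\ell(\omega)$, and the induction hypothesis finishes it. A base case is immediate since short elements avoid all the patterns.

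The main obstacle is the bookkeeping in the reduction step: one must be careful that after applying \eqref{eq:relquotient2} or \eqref{eq:relquotient3} inside a reduced word, the resulting terms genuinely have smaller length. The relations \eqref{eq:relquotient2}--\eqref{eq:relquotient3} express a length-$3$ braid word as a combination of words of length $\le 2$ in the same generators, and the usual argument (each $g_j$ on either side of the modified factor either stays or, if a Hecke/quadratic collision occurs, only shortens) shows that lengths strictly drop; this is precisely the kind of argument already invoked for \eqref{eq:relquotient} in the proof of Theorem \ref{theo:basisA}, so I would cite that reasoning rather than redo it. The only genuinely new input, beyond the Stembridge combinatorial dictionary between pattern avoidance and fully commutative top elements, is checking that the two relations of Proposition \ref{propCn2} are exactly the rewriting rules attached to the two kinds of forbidden braid subwords — which is visible by inspection of \eqref{eq:relquotient2} and \eqref{eq:relquotient3}. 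Everything else is the identical closing argument as in Theorem \ref{theo:basisA}.
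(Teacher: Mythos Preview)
Your proposal is correct and follows essentially the same approach as the paper's own proof: induction on length, invoking Stembridge's characterisation to find a subword $s_1s_0s_1$ or $s_is_{i+1}s_i$ in a reduced expression of any element outside the set, applying relations \eqref{eq:relquotient2}--\eqref{eq:relquotient3} to reduce length, and then concluding by matching the cardinality \eqref{eq:cardfullcommtop} with $\dim F\cC_{n,2}$. The only cosmetic difference is that the paper starts the induction directly from all of $B_n$ rather than from the $\avB$ spanning set, and cites Theorem~4.1 together with Corollary~5.6 of \cite{Stem} for the subword criterion.
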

One can replace each generator $g_i$ by $e_i$ in the expressions in (\ref{basis-cC2}) and this clearly also gives  a basis of $\cC_{n,2}$. This basis can be found in \cite[Appendix B]{MRR}.
\begin{proof}
We proceed similarly to the proof of Theorem \ref{theo:basisA}. All signed permutations $\omega \in B_n$ with length $\ell(\omega)<3$ avoid the patterns in \eqref{eq:basisC2}. It is shown in \cite{Stem} (see Theorem 4.1 and Corollary 5.6) that an element $\omega$ which does not avoid the patterns of \eqref{eq:basisC2} contains in a reduced expression $s_1s_0s_1$ or $s_is_{i+1}s_i$ for $i\geq 1$. This means that the defining relations \eqref{eq:relquotient2} and \eqref{eq:relquotient3} can be used to express $g_\omega$ in terms of elements of smaller length. Using induction on the length, we can therefore conclude that \eqref{eq:basisC2} is a spanning set for $\cC_{n,2}$ over $R$. The rest follows by comparing the cardinality \eqref{eq:cardfullcommtop} with the dimension \eqref{eq:dimFCn2} of $F\cC_{n,2}$.
\end{proof}

\subsection{Quasi-idempotents in the one-boundary Temperley--Lieb algebra $\cC_{n,2}$}

Let $n\geq 2$. Among the four central quasi-idempotents of $\cH{n}$, the two $E_n^{(-q^{-1},\alpha_i)}$ are equal to $0$ in $\cC_{n,2}$. The one with $\alpha_2$ was cancelled to define $\cA_n$, and the one with $\alpha_1$ was cancelled to define $\cC_{n,2}$. So only the following two remain:
\[E_n^{(q,\alpha_1)}\ ,\ \ \ \ \ \ E_n^{(q,\alpha_2)}\ .\] 
It turns out that the expression of these two remaining quasi-idempotents in $\cC_{n,2}$ simplifies compared to their original definition in $\cH{n}$ and a global factor appears, as shown in the following result. Note that the following result was already true in $\cA_n$ only for $E_n^{(q,\alpha_2)}$ (Proposition \ref{prop-idempAn}). So the novelty here is that it becomes also true for $E_n^{(q,\alpha_1)}$ in $\cC_{n,2}$.
Below, we use again the notation modulo $2$ for the indices of $\alpha_b$.
\begin{prop}\label{prop-idempCn2}
Let $n\geq 2$ and $b\in\{1,2\}$. We have in $\cC_{n,2}$: 
\begin{align} 
&E_{n}^{(q,\alpha_b)}=q^{\frac{n(n-1)}{2}}[n]_q!\tilde{E}_{n}^{(q,\alpha_b)}\,,\label{eq:idemENrenormCn2}
\end{align}
where the renormalised element is given recursively by:
\begin{align} 
& \tilde{E}_{n}^{(q,\alpha_b)}=\tilde{E}_{n-1}^{(q,\alpha_b)}\bigl((1-q^2)(1+q g_{n-1}+\dots +q^{n-2}g_{n-1}\dots g_2)+q^{n-1}g_{n-1}\dots g_1(1-\alpha_{b+1}^{-1}g_0)\bigr)\ ,\label{eq:idemENtildeCn2}
\end{align}
with the convention that $\tilde{E}_{1}^{(q,\alpha_b)}=E_1^{(q,\alpha_b)}=(1-\alpha_{b+1}^{-1}g_0)$.
\end{prop}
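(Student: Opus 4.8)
The goal is to upgrade Proposition \ref{prop-idempAn} so that the statement about $\tilde E_n^{(q,\alpha_2)}$ now holds for \emph{both} $b=1$ and $b=2$ in $\cC_{n,2}$. The case $b=2$ is already settled: \eqref{eq:idemENrenorm2} and \eqref{eq:idemENtilde2} of Proposition \ref{prop-idempAn} are exactly \eqref{eq:idemENrenormCn2} and \eqref{eq:idemENtildeCn2} for $b=2$, and they hold in $\cA_n$, hence a fortiori in the quotient $\cC_{n,2}$. So the whole content is the case $b=1$, and the plan is to run the same induction as in part \textbf{2} of the proof of Proposition \ref{prop-idempAn}, but replacing the defining relation of $\cA_n$ by the stronger relation \eqref{eq:relquotient2} available in $\cC_{n,2}$.

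\textbf{Key steps.} First I would check the base case $n=2$: write $E_2^{(q,\alpha_1)}$ explicitly from \eqref{eq:E} (with $b=1$, so $\alpha_{b+1}=\alpha_2$), and use the relation \eqref{eq:relquotient2} of Proposition \ref{propCn2} (and its consequence \eqref{eq:relquotient32}) to rewrite the cubic and quartic terms $g_1g_0g_1$, $g_0g_1g_0$, $g_0g_1g_0g_1$; collecting terms should produce $q[2]_q!\,(1-\alpha_2^{-1}g_0)(\ldots)$ matching \eqref{eq:idemENtildeCn2}. Then for $n\ge 3$ I would use the recursive formula \eqref{eq:Erec} for $E_n^{(q,\alpha_1)}$ exactly as in \eqref{eq:Eq2rec}, but now with $\alpha_{b+1}=\alpha_2$ throughout instead of $\alpha_1$. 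The crucial observation is that the argument in part \textbf{2} of that earlier proof uses the relation of $\cA_n$ only through its rewriting $(-\alpha_1^{-1})(1-\alpha_1^{-1}g_0)g_1g_0g_1 = q(1-\alpha_1^{-1}g_0)(-q+g_1-\alpha_1^{-1}g_1g_0)$ — that is, the element $E_{n-1}^{(q,\alpha_2)}=C\cdot(1-\alpha_1^{-1}g_0)$ was left-multiplying. Here instead $E_{n-1}^{(q,\alpha_1)}=C\cdot(1-\alpha_2^{-1}g_0)$, so I need the analogous identity with $\alpha_2^{-1}$ in place of $\alpha_1^{-1}$ on the left of $g_1g_0g_1$. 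This is precisely what \eqref{eq:relquotient2} gives, because multiplying \eqref{eq:relquotient2} on the left by $(1-\alpha_2^{-1}g_0)$ and using \eqref{eq:relquotient32} eliminates the unwanted $g_0g_1g_0$ term and yields an identity of the form $(1-\alpha_2^{-1}g_0)g_1g_0g_1 = (1-\alpha_2^{-1}g_0)(\text{linear in }1,g_0,g_1,g_1g_0)$. Substituting this into the recursion, then using $E_{n-1}^{(q,\alpha_1)}g_k = qE_{n-1}^{(q,\alpha_1)}$ for $k=1,\dots,n-2$ together with the braid relations and the Hecke relation (the two auxiliary identities displayed just before the end of that earlier proof, with $\alpha_1^{-1}$ replaced by $\alpha_2^{-1}$), collecting the coefficient $q^{n-1}[n]_q$ and invoking the induction hypothesis finishes the step.

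\textbf{Main obstacle.} There is no serious conceptual obstacle; the work is entirely in verifying that the linear rewriting of $(1-\alpha_2^{-1}g_0)g_1g_0g_1$ coming out of \eqref{eq:relquotient2} has exactly the same shape (coefficients aside) as the one used for $b=2$, so that all the subsequent bookkeeping with the braid relations and the telescoping of the $q$-power prefactors goes through verbatim with $\alpha_1\leftrightarrow\alpha_2$ swapped in the appropriate slot. The one point that requires genuine (if routine) care is confirming that the relation \eqref{eq:relquotient2} — which involves the \emph{symmetric} combination $\alpha_1+\alpha_2$ and is not symmetric under $\alpha_1\leftrightarrow\alpha_2$ once one also uses $e_0=\alpha_2-g_0$ — really does close up to give a clean linear expression after left-multiplication by $(1-\alpha_2^{-1}g_0)$; equivalently, that the combination $g_0g_1g_0$ that obstructs the $\cA_n$ computation genuinely drops out here thanks to \eqref{eq:relquotient32}. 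Once that identity is in hand, the induction is a direct transcription of part \textbf{2} of the proof of Proposition \ref{prop-idempAn}.
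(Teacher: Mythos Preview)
Your proposal is correct and follows essentially the same approach as the paper: induction via the recursion \eqref{eq:Erec}, combined with the relation \eqref{eq:relquotient2} to rewrite $g_1g_0g_1$, and the eigenvalue properties $E_{n-1}^{(q,\alpha_b)}g_0=\alpha_b E_{n-1}^{(q,\alpha_b)}$ and $E_{n-1}^{(q,\alpha_b)}g_i=qE_{n-1}^{(q,\alpha_b)}$. Two small remarks: first, the paper runs the argument uniformly for both $b$ rather than invoking Proposition~\ref{prop-idempAn} for $b=2$, but your shortcut is perfectly valid; second, your appeal to \eqref{eq:relquotient32} is unnecessary --- left-multiplying \eqref{eq:relquotient2} by $(1-\alpha_2^{-1}g_0)$ and using only $(1-\alpha_2^{-1}g_0)g_0=\alpha_1(1-\alpha_2^{-1}g_0)$ already gives
\[
(-\alpha_2^{-1})(1-\alpha_2^{-1}g_0)g_1g_0g_1 = q(1-\alpha_2^{-1}g_0)\bigl(-q+g_1-\alpha_2^{-1}g_1g_0\bigr),
\]
which is the exact $\alpha_1\leftrightarrow\alpha_2$ analogue of the identity you need, with no $g_0g_1g_0$ term ever appearing.
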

\begin{proof}
The case $n=2$ is easily verified by a direct calculation in $\cC_{2,2}$. Now suppose that the formula \eqref{eq:idemENrenormCn2} is true for some $n\geq 2$. To show that it is true for $n+1$, one can use the recurrence relation \eqref{eq:Erec} for $E_{n+1}^{(q,\alpha_b)}$ in $\cH{n+1}$ and then replace $g_1g_0g_1$ using the defining relation \eqref{eq:relquotient2} of $\cC_{n+1,2}$. With the help of the property $E_{n}^{(q,\alpha_b)}g_0 = \alpha_bE_{n}^{(q,\alpha_b)}$, the result simplifies to
\begin{align}
E_{n+1}^{(q,\alpha_b)} = 
E_{n}^{(q,\alpha_b)}\Big( &1+qg_n+\dots + q^{n-1}g_n\dots g_2 - q^{n+3} g_n \dots g_2(1+qg_2+\dots +q^{n-1}g_2 \dots g_n)  \nonumber \\
 &+ q^ng_n\dots g_1(1+q^{2}+q^3g_2+\dots +q^{n+1}g_2 \dots g_n)(1-\alpha_{b+1}^{-1} g_0) \Big) \ .
\end{align}
The remaining terms can be simplified using the Hecke relation \eqref{eq:cycHrel4}, the braid relations \eqref{eq:cycHrel1} and  \eqref{eq:cycHrel3}, and the property $E_{n}^{(q,\alpha_b)}g_i = qE_{n}^{(q,\alpha_b)}$ for $i=1,\dots,n-1$ to arrive at
\begin{equation}
E_{n+1}^{(q,\alpha_b)} = 
E_{n}^{(q,\alpha_b)}q^{n}[n+1]_q\Big( (1-q^2)(1+qg_n+\dots + q^{n-1}g_n\dots g_2)+ q^{n}g_n\dots g_1(1-\alpha_{b+1}^{-1} g_0) \Big) \ .
\end{equation}
 The proof is completed by using the induction hypothesis. 
\end{proof}


\section{The fused Hecke algebra}
\label{sec:fusedH}

Let $k\geq 1$. We briefly recall the definition of the fused Hecke algebra $\fH$ in the particular case where only the $k$ first strands are fused, and refer to \cite{CP,CP2,P1} for more details. The fused Hecke algebra $H_{k,n}$ is defined for a non-zero complex number $q$ satisfying
\begin{equation}\label{cond-q}
 q^{2i}\neq 1\,,\ \ i=1,\dots,k\ ,
 \end{equation}
since denominators of the form $q^{2i}-1$, with $i=1,\dots,k$, appear in its definition. Equivalently, we will consider $H_{k,n}$ to be defined over the ring generated by $\bC[q^{\pm1}]$ and $(q^{2i}-1)^{-1}$ for $i=1,\dots,k$:
\begin{equation}\label{cond-ring}
\bC^{(k)}[q^{\pm1}]=\bC[q^{\pm1},(q^2-1)^{-1}\dots(q^{2k}-1)^{-1}]\ .
\end{equation}
\begin{rem}
The fused Hecke algebra can also be defined for $q=\pm1$ (and is called the algebra of fused permutations in \cite{CP}), but we will not consider this possibility here, since this would require to replace from the beginning the algebra $\cH{n}$ of Section \ref{sec-A} by a different algebra.
\end{rem}

\subsection{Definition of $\fH$}\label{subsec:deffusedH}

Let $n\geq 0$. We denote $H_{k+n}$ the Hecke algebra associated to the symmetric group $\fS_{k+n}$, with generators $\si_i$, $i=1,\dots,k+n-1$. Its definition was given at the end of Subsection \ref{subsec-cyclo}, in terms of generators $g_i$'s, but we change the names of the generators to avoid any confusion. We define it over $\bC^{(k)}[q^{\pm1}]$.

The standard basis elements of $H_{k+n}$ are denoted $\si_{\omega}$, where $\omega\in\fS_{k+n}$. Recall that $\si_{\omega}=\si_{i_1}\dots\si_{i_l}$ if $\omega=s_{i_1}\dots s_{i_l}$ is a reduced expression in terms of the adjacent transpositions $s_i=(i,i+1)$. 

The (normalised) $q$-symmetriser of the algebra $H_k$ is:
\begin{equation}
	P_{k} = \frac{\sum_{\om\in \fS_k}q^{\ell(\om)}\si_\om}{\sum_{\om\in \fS_k}q^{2\ell(\om)}}=\frac{q^{-k(k-1)/2}}{[k]_q!} \sum_{\om\in \fS_k}q^{\ell(\om)}\si_\om\ ,
\end{equation}
and is well-defined over the ring $\bC^{(k)}[q^{\pm1}]$, see \eqref{cond-ring}. We see the $q$-symmetriser $P_k$ as an element of $H_{k+n}$ through the natural embedding of $\fS_k$ in $\fS_{k+n}$, where $\fS_k$ acts on the first $k$ letters.

The $q$-symmetriser is a primitive central idempotent of $H_k$: we have $ P_k\si_i = \si_iP_k=qP_k$ for $i=1,\dots,k-1$ and $P_kP_i=P_iP_k=P_k$ for all $i\leq k$. It satisfies the recursive formula:
\begin{equation}
	 q^{k-1}[k]_q P_k = P_{k-1}(1+q\si_{k-1}+q^2\si_{k-1}\si_{k-2}+ \cdots + q^{k-1}\si_{k-1}\si_{k-2}\dots \si_{1}) \ . \label{eq:PkPk-1}
\end{equation}
	
\begin{defi}
The fused Hecke algebra $\fH$ is the algebra of the form $P_kH_{k+n}P_k$.
\end{defi}

A basis of the algebra $\fH$ is indexed by the double cosets $\cosets$ of the subgroup $\fS_k$ in $\fS_{k+n}$. In each of these cosets, there is a unique representative of minimal length (see \cite{GP}), and we will identify the set of double cosets $\cosets$ with the set of minimal-length representatives. The standard basis of $\fH$ is:
\[\{P_k\si_{\omega}P_k\ |\ \omega\in\cosets\}\ .\]
The dimension of the algebra $\fH$ is the number of double cosets in $\cosets$, or of what was called fused permutations in \cite{CP}. It is the number of ways to connect a row of one ellipse and $n$ dots to another such row, with the requirement that $k$ edges start and $k$ edges arrive at the ellipses (for the dots, it is the usual rule of a single edge at each dot). To count such objects, one has first to choose how many edges from, say, the top ellipse will go to a dot. This is choosing $i \in \{1,2,\dots,\min \{k,n\}\}$. Then one needs to chose $i$ bottom dots among $n$ where to put these $i$ edges and  $i$ top dots among $n$ which will be connected to the bottom ellipse. Finally, one can choose an arbitrary permutation diagram between the remaining two lines of $n-i$ dots which are not connected to the ellipses. It follows from this discussion that
\begin{equation}
	\dim (\fH) = \sum_{i=0}^{\min \{k,n\}} (n-i)!\binom{n}{i}^2. \label{eq:dimfH}
\end{equation}  

For any word $x$ in the generators $\si_i$ of $H_{k+n}$ and their inverses $\si_i^{-1}$, the diagrammatic representation of the element $P_kxP_k$ of $\fH$ is obtained by drawing the usual braid-like picture for $x$ between two rows of $k+n$ dots, and then fusing in one large dot (or ellipse) the $k$ first top dots and similarly for the $k$ first bottom dots. For example, we define the following elements of $\fH$ (for $i=1,2,\dots,n-1$):
\begin{align}
	&S_i := 
	\begin{tikzpicture}[scale=0.3,baseline={([yshift=\eseq]current bounding box.center)}]
		\draw[white] (-1.4,0.8) arc (90:270:0.8);
		\ellk{0}{0}
		\ellstrand{0}{0}
		\slab{(4,3)}{$1$}
		\strand{4}{2}
		\node at (6,0) {$\dots$};
		\slab{(8,3)}{$i-1$}
		\strand{8}{2}
		\slab{(12,3)}{$i$}
		\slab{(16,3)}{$i+1$}
		\ocross{12}{2}
		\slab{(20,3)}{$i+2$}
		\strand{20}{2}
		\node at (22,0) {$\dots$};
		\slab{(24,3)}{$n$}
		\strand{24}{2}
	\end{tikzpicture}, \label{eq:defSi}\\
	&S_0 :=
	\begin{tikzpicture}[scale=0.3,baseline={([yshift=\eseq]current bounding box.center)}]
		\ellk{0}{0}
		\ellstrand{0}{0}
		\slab{(4,3)}{$1$}
		\encell{4}{2}
		\slab{(8,3)}{$2$}
		\strand{8}{2}
		\node at (10,0) {$\dots$};
		\slab{(12,3)}{$n$}
		\strand{12}{2}
	\end{tikzpicture}, \label{eq:defS0}\\
	&T :=
	\begin{tikzpicture}[scale=0.3,baseline={([yshift=\eseq]current bounding box.center)}]
		\draw[white] (-1.4,0.8) arc (90:270:0.8);
		\ellk{0}{0}
		\ellocross{0}{0}
		\slab{(4,3)}{$1$}
		\slab{(8,3)}{$2$}
		\strand{8}{2}
		\node at (10,0) {$\dots$};
		\slab{(12,3)}{$n$}
		\strand{12}{2}
	\end{tikzpicture}\ , \label{eq:defT} 
\end{align}
which algebraically correspond to (for $i=1,2,\dots,n-1$)
\begin{align}
	&S_i = P_k\si_{k+i}P_k = P_k\si_{k+i} = \si_{k+i}P_k, \label{eq:Sialg}\\
	&S_0 = P_k\si_{k}\si_{k-1}\dots \si_{2}\si_{1}^2\si_{2}\dots\si_{k-1}\si_{k}P_k, \label{eq:S0alg}\\
	&T = P_k\si_{k}P_k. \label{eq:Talg}
\end{align}
We have used that $\si_{k+i}$ commutes with $P_k$ when $i\geq 1$.

\paragraph{Eigenvalues of the generator $S_0$.} We will need to know the characteristic equation satisfied by $S_0$ in order to relate the algebra $\fH$ to a cyclotomic quotient $\cH{n}$ for the correct values of $\alpha_1,\alpha_2$.
\begin{prop}\label{prop-eigS0}
We have in $\fH$:
\begin{equation}
	\left(S_0-q^{2k}P_k\right)\left(S_0-q^{-2}P_k\right)=0\ .
\end{equation}
\end{prop}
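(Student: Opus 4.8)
The plan is to compute the characteristic equation of $S_0$ directly from its algebraic expression \eqref{eq:S0alg}, namely $S_0 = P_k\si_k\si_{k-1}\dots\si_2\si_1^2\si_2\dots\si_{k-1}\si_kP_k$. The natural strategy is to reduce the computation to the Hecke algebra $H_{k+1}$ acting on the first $k+1$ strands: since $\si_1,\dots,\si_{k}$ all lie in $H_{k+1}$ and $\si_1^2$ does too, the element $S_0$ is (up to the outer factors $P_k$) built entirely inside $H_{k+1}$, and $P_k$ is a primitive idempotent of $H_k\subset H_{k+1}$. So I would work in $P_kH_{k+1}P_k$, which is a well-understood small algebra: by the Schur--Weyl / fusion picture it corresponds to the centraliser of $L^N_{(k)}\otimes L^N$, and abstractly $P_kH_{k+1}P_k$ is spanned by $P_k$ and $P_k\si_k P_k$, hence is (at most) $2$-dimensional. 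Concretely, $L_{(k)}\otimes L$ decomposes into the two summands $L_{(k+1)}$ and $L_{(k,1)}$ (the two ways to add a box to a one-row partition), which is exactly the statement that $P_kH_{k+1}P_k$ has dimension $2$ and that $S_0$, being an element of it, satisfies a quadratic equation.

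Next I would identify the two eigenvalues. The cleanest way is to evaluate $S_0$ on each of the two irreducible components. On the symmetric component $L_{(k+1)}$, the projector is the $q$-symmetriser $P_{k+1}$ on $k+1$ strands, and one checks $P_{k+1}S_0P_{k+1} = q^{2k}P_{k+1}$: indeed $P_{k+1}\si_i = q P_{k+1}$ for all $i\le k$, so conjugating $\si_1^2$ (whose relevant eigenvalue on the symmetric part is $q^2$) outward through the $2k$ generators $\si_k\dots\si_2$ and $\si_2\dots\si_k$ produces the factor $q^{2k}\cdot q^2 / q^2$... more carefully, $\si_1^2$ has eigenvalues $q^2$ and $q^{-2}$ on the two isotypic pieces of $L\otimes L$, and pushing the symmetriser through gives the scalar $q^{2k}$ on $L_{(k+1)}$. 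On the other component $L_{(k,1)}$ one finds the eigenvalue $q^{-2}$ by a similar but shorter computation (here the relevant eigenvalue of $\si_1^2$ is $q^{-2}$ and the symmetriser-pushing contributes a trivial net factor). Equivalently, and perhaps more efficiently, I would just expand $S_0$ using the recursive formula \eqref{eq:PkPk-1} for $P_k$ to write $S_0$ as an explicit $R$-linear combination of $P_k$ and $P_k\si_k P_k = T$, then compute $S_0^2$ in the same basis using the Hecke relation $\si_k^2 = (q-q^{-1})\si_k + 1$ and the identities $P_k\si_i = qP_k$ for $i<k$; the resulting $2\times 2$ matrix of left-multiplication by $S_0$ will have characteristic polynomial $(t-q^{2k})(t-q^{-2})$.

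The main obstacle is the bookkeeping in pushing $P_k$ past the long word $\si_k\dots\si_2\si_1^2\si_2\dots\si_k$: one must carefully track which Hecke relations and braid moves are allowed, since $P_k$ only absorbs $\si_1,\dots,\si_{k-1}$ with eigenvalue $q$, while $\si_k$ does not commute with $P_k$. I expect the slickest route is the representation-theoretic one: establish that $P_kH_{k+1}P_k$ is $2$-dimensional (which follows from semisimplicity over $F$ together with the two-summand decomposition recalled in the introduction, or directly from the coset count \eqref{eq:dimfH} with $n=1$), then note that the two eigenvalues of $S_0$ must be the two scalars by which $\si_1^2$-type fusion acts on $L_{(k+1)}$ and $L_{(k,1)}$ respectively; a short weight computation, or comparison with the known $R$-matrix eigenvalues for fused representations in \cite{CP}, pins these down as $q^{2k}$ and $q^{-2}$. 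Finally, since $P_k$ is the identity of $\fH$, the quadratic relation $\left(S_0-q^{2k}P_k\right)\left(S_0-q^{-2}P_k\right)=0$ holds in $\fH$ as stated, and it holds over the ring $\bC^{(k)}[q^{\pm1}]$ because all coefficients appearing are Laurent polynomials in $q$.
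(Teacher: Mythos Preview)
Your proposal is correct and, in its concrete algebraic branch, is essentially the paper's proof: the paper expands $\sigma_1^2=(q-q^{-1})\sigma_1+1$ inside $S_0$ to obtain the relation $S_0=(q-q^{-1})q^{k-1}[k]_qT+P_k$, then uses the recursion \eqref{eq:PkPk-1} to compute $T^2$ in the basis $\{P_k,T\}$ of $P_kH_{k+1}P_k$, yielding the quadratic $(T-qP_k)(T+q^{-k}[k]_q^{-1}P_k)=0$, from which the claimed relation for $S_0$ follows by affine change of variable. The only difference is organisational: the paper routes the computation through the characteristic equation of $T$ rather than computing $S_0^2$ directly, and it does not invoke the representation-theoretic interpretation you sketch (that interpretation is fine as motivation but, as you yourself note, the clean verification is the direct Hecke-algebra calculation).
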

\begin{proof}
First, we show the following relation between $S_0$ and $T$:
\begin{align}
	S_0 = (q-q^{-1})q^{k-1}[k]_q T +P_k. \label{eq:relST}
\end{align}
To check this, we start with the defining formula for $S_0$ and use the quadratic relation for $\si_1$. We find:
\[S_0=(q-q^{-1})P_k\sigma_{k}\sigma_{k-1}\dots \si_2\si_1\si_2\dots\si_{k-1}\si_k P_k+P_k\sigma_{k}\sigma_{k-1}\dots \si_2^2\dots\si_{k-1}\si_k P_k\ .\] 
Using the braid relations and the property of $P_k$, the first term becomes
\[(q-q^{-1})P_k\sigma_{1}\sigma_{2}\dots \si_{k-1}\si_k\si_{k-1}\dots\si_2\si_1 P_k=(q-q^{-1})q^{2(k-1)}P_k\si_kP_k=(q-q^{-1})q^{2(k-1)}T\ .\]
We proceed similarly with the remaining term, which ultinately gives Formula \eqref{eq:relST}.
 
The proof of the proposition is concluded by calculating the eigenvalues of $T$. We have:
\begin{equation}
	\left(T-qP_k\right)\left(T+q^{-k}[k]_q^{-1}P_k\right)=0. \label{eq:Tchar}
\end{equation}
To check this equality, we first use the recurrence relation for $P_k$ to write:
\[\si_kP_k\si_k=q^{-k+1}[k]_q^{-1}P_{k-1}\si_k(1+q\si_{k-1}+q^2\si_{k-1}\si_{k-2}+\dots+q^{k-1}\si_{k-1}\dots \si_1)\si_k\ .\]
Then we proceed as follows, using $P_kP_{k-1}=P_k$, the braid relations and $\si_iP_k=q P_k$ if $i<k$:
	\begin{align}
	T^2=  P_k\si_kP_k\si_kP_k & =q^{-k+1}[k]_q^{-1}P_k(1+(q+q^3+q^5+ \cdots + q^{2k-1}-q^{-1})\si_k)P_k \\
	&= q^{-k+1}[k]_q^{-1}P_k+(q-q^{-k}[k]_q^{-1})T\ .
\end{align}
This concludes the verification.
\end{proof}

\paragraph{The semisimple representation theory of $\fH$.} In this paragraph, we work over $\bC(q)$ (or we assume that $q^2$ is not a root of unity of order $\leq k+n$).

The irreducible representations of $\bC(q)H_{k,n}$ were described in \cite{CP}. They are indexed by partitions $\lambda\vdash k+n$ such that $\lambda_1\geq k$ (that is, the first line of $\lambda$ contains at least $k$ boxes). For example, for $n=0$, there is a single irreducible representation, indexed by a line of $k$ boxes. The branching rules are given by inclusion of partitions. For example, when $k=3$, the beginning of the Bratteli diagram is:
\begin{center}
 \begin{tikzpicture}[scale=0.3]
\mdiagp{-1}{0}{3};\node at (-2,-0.5) {$1$};
\draw (-0.5,-1.5) -- (-3,-3.5);\draw (1.5,-1.5) -- (3.5,-3.5);
\mdiag{-5}{-4}{4};\node at (-6,-4.5) {$1$};\mdiaggp{2}{-4}{3}{1};\node at
(1,-5) {$1$};

\draw (-5,-5.5) -- (-10.5,-8.5);\draw (-3,-5.5) -- (-3,-8.5);\draw
(1.7,-6.3) -- (-1,-8.5);\draw (3.5,-6.3) -- (3.5,-8.5);\draw (5,-5.5)
-- (9.5,-8.5);

\mdiag{-13}{-9}{5};\node at (-14,-9.5) {$1$};\mdiagg{-5}{-9}{4}{1};\node
at (-6,-10) {$2$};\mdiaggp{2}{-9}{3}{2};\node at (1,-10)
{$1$};\mdiagggp{8}{-9}{3}{1}{1};\node at (7,-10.5) {$1$};

\draw (-13,-10.5) -- (-22,-14.5);\draw (-10.5,-10.5) --
(-13.5,-14.5);\draw (-5.3,-11.3) -- (-12.5,-14.5);\draw (-4,-11.3) --
(-6,-14.5);\draw (-3,-10.5) -- (5,-14.5);
\draw (1.7,-11.3) -- (-4,-14.5);\draw (3,-11.3) -- (0.5,-14.5);\draw
(4.3,-11.3) -- (11.7,-14.5);\draw (7.7,-12.3) -- (7,-14.5);\draw
(9.3,-12.3) -- (13.5,-14.5);
\draw (11,-10.5) -- (19.5,-14.5);

\node at (-26,-15.5) {$1$};\mdiag{-25}{-15}{6};
\node at (-17,-16) {$3$};\mdiagg{-16}{-15}{5}{1};

\node at (-9,-16) {$3$};\mdiagg{-8}{-15}{4}{2};

\node at (-2,-16) {$1$};\mdiaggp{-1}{-15}{3}{3};

\node at (4,-16.5) {$3$};\mdiaggg{5}{-15}{4}{1}{1};\node at (11,-16.5)
{$2$};\mdiagggp{12}{-15}{3}{2}{1};\node at (17,-17)
{$1$};\mdiaggggp{18}{-15}{3}{1}{1}{1};


\node at (-32,-0.5) {$n=0$};\node at (-32,-4.5) {$n=1$};\node at
(-32,-9.5) {$n=2$};\node at (-32,-15.5) {$n=3$};
\end{tikzpicture}
\end{center}
We have shaded the three fixed boxes in the first row of each partition. Next to each partition is the dimension of the corresponding irreducible representation. We emphasize that the dimension is not the number of standard tableaux strictly speaking, but is the number of standard fillings of the non-shaded boxes by $1,\dots,n$.

It is easy to see that, when $2\leq n\leq k$, there are three one-dimensional representations for $H_{k,n}$. They are given by the following partitions:
\begin{equation}\label{1dimfH}
\lambda=(k+n)\ :\begin{array}{l} S_0\mapsto q^{2k}\,,\\
S_i\mapsto q\,, \end{array}\qquad 
\lambda=(k,1^n)\ :\begin{array}{l} S_0\mapsto q^{-2}\,,\\
S_i\mapsto -q^{-1}\,,\end{array} \qquad
\lambda=(k,n)\ :\begin{array}{l} S_0\mapsto q^{-2}\,,\\
S_i\mapsto q\,,\end{array}
\end{equation}
where we give the associated values of the elements $S_0$ and $S_i$ ($i\geq 1$). These are easily obtained from the description in \cite{CP}. The case of $\lambda=(k+n)$ is immediate and the level $n=1$ together with the given branching rules give all the other values of $S_0$. Calculating the eigenvalue of $S_i$ is also immediate from the description in \cite{CP} since $S_i=P_k\sigma_{k+i}$ with $\sigma_{k+i}$ commuting with $P_k$.

When $n>k$, there is only two remaining one-dimensional representations, the one corresponding to $\lambda=(k,n)$ being removed from the list (it would not make sense for $n>k$).

\begin{rem}\label{rem-repfH}
Let $n\leq k$. Anticipating a little bit, the algebra $\fH$ is going to be obtained (below) as a specialisation of the parameters $\alpha_1,\alpha_2$ of the algebra $\cA_n$. This specialisation is semisimple over $\bC(q)$, and thus we have a bijection between the irreducible representations of $\bC(q)\fH$ and the representations described in Section \ref{sec-A}:
\[Irr(F\cA_{n})=\{V_{(\lambda,\mu)}\ |\ (\lambda,\mu)\in Par_2(n)\ \text{with}\ \ell(\mu)\leq 1\ \}\ .\]
The bijection maps $(\lambda,\mu)$ to the partition made of a first line of $k+|\mu|$ boxes, and with the diagram of $\lambda$ for the remaining lines. This is well-defined since $n$, and in turn $|\lambda|$, is less or equal to $k$.

If $n>k$, the specialisation is not semisimple anymore and the algebra $\fH$ will be obtained as a quotient of this non-semisimple specialisation of $\cA_n$. Therefore we can not immediately identify the irreducible representations of $\fH$ with a subset of the irreducible representations of $\cA_n$ as soon as $n>k$.
\end{rem}

\subsection{An algebraic description of $\fH$}

In this section, we use the algebra $\cA_n$ from Section \ref{sec-A} to obtain an algebraic presentation of the fused Hecke algebra $\fH$. We are going to work in this section with the following specialisation of the parameters $\alpha_1,\alpha_2$:
\begin{equation}
	\alpha_1=q^{-2}\ , \quad \alpha_2=q^{2k}\ . \label{eq:param2}
\end{equation}
This is motivated by Proposition \ref{prop-eigS0} since these two values are the eigenvalues of the element $S_0$. For these specific values, we have the following factorisation result for one of the quasi-idempotents of $\cA_n$, as a corollary of our results in Section \ref{subsec-idemA}.
\begin{coro}\label{coro:Ek+1}
In $\cA_{k+1}$, when $(\alpha_1,\alpha_2)$ are specialised as in \eqref{eq:param2}, the element $E_{k+1}^{(q,\alpha_1)}$ factorises as:
\[E_{k+1}^{(q,\alpha_1)}=[k+1]_q {E'}_{k+1}^{(q,\alpha_1)}\,,\]
where ${E'}_{k+1}^{(q,\alpha_1)}$ is in $\cA_{k+1}$ with coefficients in $\bC[q^{\pm1}]$.
\end{coro}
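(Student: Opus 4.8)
The plan is to show that $E_{k+1}^{(q,\alpha_1)}$ becomes $0$ in $\cA_{k+1}$ whenever $q$ is specialised to a root of $[k+1]_q$, and then to upgrade this to divisibility by $[k+1]_q$ using Theorem~\ref{theo:basisA} together with the fact that $[k+1]_q$ is squarefree. As a preliminary plausibility check, note that under \eqref{eq:param2} one has $\alpha_1/\alpha_2=q^{-2(k+1)}$, so by Proposition~\ref{propE} and the identity $1-q^{-2j}=q^{-j}(q-q^{-1})[j]_q$ the scalar $P_{k+1}(q,\alpha_1)$ equals, up to a unit of $\bC[q^{\pm1}]$, $(q-q^{-1})^{k+1}([k+1]_q!)^2$, which is divisible by $[k+1]_q$.

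\textbf{The key step.} Let $\zeta\in\bC^\times$ be a root of $[k+1]_q$, i.e.\ $\zeta^{2(k+1)}=1$ and $\zeta^2\neq 1$ (all such $\zeta$ are roots of unity, hence nonzero, so Theorem~\ref{theo:basisA} will apply after $q\mapsto\zeta$). Then $\zeta^{2k}=\zeta^{-2}$, so the specialisation \eqref{eq:param2} evaluated at $q=\zeta$ makes $\alpha_1=\alpha_2=\zeta^{-2}$. Inspecting \eqref{defE}, when $\alpha_1=\alpha_2$ the coefficients $z_\omega$ of $E_n^{(q,\alpha_1)}$ and of $E_n^{(q,\alpha_2)}$ are literally the same, so $E_{k+1}^{(q,\alpha_1)}$ and $E_{k+1}^{(q,\alpha_2)}$ coincide in $\cH{k+1}$, hence in $\cA_{k+1}$, after this specialisation. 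Now I apply formula \eqref{eq:idemENrenorm2} of Proposition~\ref{prop-idempAn}: in $\cA_{k+1}$ over $R$ one has $E_{k+1}^{(q,\alpha_2)}=q^{\frac{k(k+1)}{2}}[k+1]_q!\,\tilde E_{k+1}^{(q,\alpha_2)}$, with $\tilde E_{k+1}^{(q,\alpha_2)}$ over $R$, and by \eqref{eq:idemENtilde2} the renormalised element $\tilde E_{k+1}^{(q,\alpha_2)}$ involves only $q$ and $\alpha_1^{-1}$. Specialising as in \eqref{eq:param2} (so $\alpha_1^{-1}=q^2$) and then setting $q=\zeta$, the element $\tilde E_{k+1}^{(q,\alpha_2)}$ stays a well-defined element of the specialised $\cA_{k+1}$ (no denominator is destroyed, all parameters going to nonzero numbers), while the prefactor $q^{\frac{k(k+1)}{2}}[k+1]_q!$ vanishes since it contains the factor $[k+1]_q$. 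Hence $E_{k+1}^{(q,\alpha_1)}$ is $0$ in $\cA_{k+1}$ specialised at $q=\zeta$, for every root $\zeta$ of $[k+1]_q$.

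\textbf{Conclusion.} Under \eqref{eq:param2} the defining relation \eqref{eq:relquotient} has coefficients in $\bC[q^{\pm1}]$, so the reduction in the proof of Theorem~\ref{theo:basisA} shows that $\{g_\omega\ |\ \omega\in\avB\}$ (with $n=k+1$) spans $\cA_{k+1}$ over $\bC[q^{\pm1}]$, and by that same theorem this family is a $\bC$-basis of $\cA_{k+1}$ after any further specialisation $q\mapsto\zeta\in\bC^\times$. Fix an expression $E_{k+1}^{(q,\alpha_1)}=\sum_{\omega\in\avB}c_\omega\,g_\omega$ with $c_\omega\in\bC[q^{\pm1}]$. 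For each root $\zeta$ of $[k+1]_q$, the key step gives $\sum_\omega c_\omega(\zeta)\,g_\omega=0$ in the specialised $\cA_{k+1}$, and linear independence there forces $c_\omega(\zeta)=0$ for all $\omega$. Thus each $c_\omega$ vanishes at every root of $[k+1]_q$; since $[k+1]_q=q^{-k}\dfrac{q^{2(k+1)}-1}{q^2-1}$ is squarefree (its roots, being roots of $q^{2(k+1)}-1$, are simple) and $\bC[q^{\pm1}]$ is a unique factorisation domain, we get $[k+1]_q\mid c_\omega$ in $\bC[q^{\pm1}]$ for every $\omega$. Setting ${E'}_{k+1}^{(q,\alpha_1)}:=\sum_{\omega\in\avB}\bigl(c_\omega/[k+1]_q\bigr)g_\omega$ then yields the statement, with coefficients in $\bC[q^{\pm1}]$.

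I expect the middle step to be the only real obstacle: one must notice that \eqref{eq:param2} collapses $\alpha_1$ and $\alpha_2$ precisely at the roots of $[k+1]_q$, which is what allows replacing $E_{k+1}^{(q,\alpha_1)}$ by $E_{k+1}^{(q,\alpha_2)}$ and invoking the already established factorisation of the latter in $\cA_{k+1}$; the accompanying verification that $\tilde E_{k+1}^{(q,\alpha_2)}$ does not itself degenerate under these specialisations is routine.
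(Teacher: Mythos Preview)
Your argument is correct and rests on the same two ingredients as the paper's proof: the comparison of $E_{k+1}^{(q,\alpha_1)}$ with $E_{k+1}^{(q,\alpha_2)}$ via the symmetry in $\alpha_1,\alpha_2$, and the factorisation \eqref{eq:idemENrenorm2} of $E_{k+1}^{(q,\alpha_2)}$ in $\cA_{k+1}$ from Proposition~\ref{prop-idempAn}. The execution, however, is more elaborate than necessary. The paper observes directly that $E_{k+1}^{(q,\alpha_1)}-E_{k+1}^{(q,\alpha_2)}$ is divisible by $(\alpha_1-\alpha_2)$ already in $\cH{k+1}$ over $R$ (immediate from \eqref{defE}, since the coefficients differ only through powers of $\alpha_1^{-1}$ versus $\alpha_2^{-1}$); after the specialisation \eqref{eq:param2} this factor becomes $q^{-2}-q^{2k}$, a unit times $[k+1]_q$, so the difference is divisible by $[k+1]_q$ in $\bC[q^{\pm1}]$ without any further work. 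Combined with \eqref{eq:idemENrenorm2} this gives the result in two lines. Your route---specialise $q$ to each root of $[k+1]_q$, use linear independence of the basis from Theorem~\ref{theo:basisA} to force the coefficients to vanish there, then invoke that $[k+1]_q$ is squarefree---recovers exactly this divisibility, but at the cost of bringing in Theorem~\ref{theo:basisA} and the squarefree property, neither of which the direct argument needs.
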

\begin{proof}
It is clear that $E_{k+1}^{(q,\alpha_1)}-E_{k+1}^{(q,\alpha_2)}$ is divisible by $(\alpha_1-\alpha_2)$. With the given specialisation, this means that it is divisible by $q^{2(k+1)}-1$ and thus by $[k+1]_q$. Now Proposition \ref{prop-idempAn} shows that $E_{k+1}^{(q,\alpha_2)}$ is also divisible by $[k+1]_q$ (actually, already over $R$). Thus we get the desired statement.
\end{proof}
This allows us to define, in the specialisation of $\cA_{k+1}$, the following element with coefficients in $\mathbb{C}[q^{\pm1}]$:
\[{E'}_{k+1}^{(q,\alpha_1)}:= \frac{1}{[k+1]_q}E_{k+1}^{(q,\alpha_1)}\ .\]
\begin{defi}\label{def-Ank}
The algebra $\cA_n^{(k)}$ is defined over $\bC^{(k)}[q^{\pm1}]$ as the specialisation of $\cA_n$ corresponding to \eqref{eq:param2} with the additional defining relation if $n>k$:
\begin{equation}
	{E'}_{k+1}^{(q,\alpha_1)}=0\ . \label{eq:relidem2}
\end{equation}  
\end{defi}
\begin{rem}
The additional relation when $n\geq k+1$ is the analogue in the Hecke case of the additional relation that is needed to pass from the one-boundary Temperley--Lieb algebra to the boundary seam algebra. We refer to \cite{MRR} and \cite{L-L} where this additional relation was interpreted in terms of a quasi-idempotent of the one-boundary Temperley--Lieb algebra (see also Section \ref{subsecboundaryseam}).
\end{rem}

\paragraph{A spanning set of $\cA_n^{(k)}$.} First, we find a spanning set for $\cA_n^{(k)}$. Consider the signed permutations with the following avoiding patterns: $\bar{1}\bar{2}$ and $\overline{k+1}\bar{k}\dots\bar{1}$. We denote by $B_n(\bar{1}\bar{2},\overline{k+1}\bar{k}\dots\bar{1})$ the subset of these signed permutations.

The first condition defining $B_n(\bar{1}\bar{2},\overline{k+1}\bar{k}\dots\bar{1})$ is the same as the one giving the basis of $\cA_n$ in Theorem \ref{theo:basisA}. The second one adds the condition that in the word $b_1b_2\dots b_n$ giving the signed permutations, we never have a strictly decreasing sequence of length $k+1$ of barred numbers. Both together mean that all barred numbers appear in decreasing orders, and that at most $k$ barred numbers are present. So to count the number of elements in $B_n(\bar{1}\bar{2},\overline{k+1}\bar{k}\dots\bar{1})$, we reason as follows on the word $b_1b_2\dots b_n$ giving the signed permutation. We choose $i$ numbers in $\{1,\dots,n\}$ that will be barred, with the condition that $i\leq k$; then choose $i$ positions among $n$ to place these barred numbers in decreasing order; and finally we choose a permutation of the remaining $n-i$ numbers to be placed (not barred) in the remaining $n-i$ positions. This gives:
\begin{equation}
	|B_n(\bar{1}\bar{2},\overline{k+1}\bar{k}\dots\bar{1})| = \sum_{i=0}^{\text{min}(k,n)}  \binom{n}{i}^2(n-i)! . \label{eq:cardavoid-k}
\end{equation}
Note that the above discussion shows the alternative description:
\begin{equation}\label{Bn12k}
B_n(\bar{1}\bar{2},\overline{k+1}\bar{k}\dots\bar{1})=B_n(\bar{1}\bar{2})\cap \{\omega\in B_n\ |\ \ell_0(\omega)<k+1\}\ .
\end{equation}
Let us emphasize that if $n\leq k$, then the set $B_n(\bar{1}\bar{2},\overline{k+1}\bar{k}\dots\bar{1})$ is the same as $B_n(\bar{1}\bar{2})$ since there are no signed permutations in $B_n$ with more than $k$ barred numbers.
\begin{prop}\label{prop-spanAnk}
The following set is a spanning set of $\cA_n^{(k)}$:
\begin{equation}
\{g_\omega\ |\ \omega\in B_n(\bar{1}\bar{2},\overline{k+1}\bar{k}\dots\bar{1})\}\ .
\label{eq:spanAnk}
\end{equation}
\end{prop}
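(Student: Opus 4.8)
The plan is to bootstrap from Theorem~\ref{theo:basisA}. After the specialisation~\eqref{eq:param2} and (when $n>k$) the further quotient~\eqref{eq:relidem2}, the set $\{g_\omega\mid\omega\in\avB\}$ still linearly generates $\cA_n^{(k)}$ over $\bC^{(k)}[q^{\pm1}]$; and by the identity~\eqref{Bn12k}, the content of the proposition is precisely that the elements $g_\omega$ with $\ell_0(\omega)\geq k+1$ are superfluous. When $n\leq k$ no signed permutation in $B_n$ has more than $k$ barred letters, so $\avB=B_n(\bar1\bar2,\overline{k+1}\bar k\dots\bar1)$ and there is nothing to prove; so assume $n>k$. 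I would then show, by strong induction on $\ell(\omega)$, that every $g_\omega$ ($\omega\in B_n$) lies in $\rmspan_{\bC^{(k)}[q^{\pm1}]}\{g_\tau\mid\tau\in\avB,\ \ell_0(\tau)\leq k\}$. If $\omega\notin\avB$, then exactly as in the proof of Theorem~\ref{theo:basisA} (via \cite[Lemma~2.1]{Stem} and the defining relation~\eqref{eq:relquotient} of $\cA_n$) one rewrites $g_\omega$ in terms of $g_{\omega'}$ with $\ell(\omega')<\ell(\omega)$ and invokes the inductive hypothesis. If $\omega\in\avB$ with $\ell_0(\omega)\leq k$, there is nothing to do. The crux is the remaining case $\omega\in\avB$ with $\ell_0(\omega)\geq k+1$.

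Let $w_\star\in B_{k+1}\subset B_n$ be the signed permutation with one-line notation $\overline{k+1}\,\overline{k}\cdots\bar1$; it is the \emph{unique} $\bar1\bar2$-avoiding element of $B_{k+1}$ with $\ell_0=k+1$, equal to $[0,0][1,0]\cdots[k,0]$, so that $g_{w_\star}=g_0(g_1g_0)(g_2g_1g_0)\cdots(g_k\cdots g_1g_0)$ is a reduced expression of length $\binom{k+2}{2}$. The argument uses two ingredients. First, a combinatorial lemma: any $\bar1\bar2$-avoiding $\omega$ with $\ell_0(\omega)\geq k+1$ admits a reduced expression of the form $\omega_1 w_\star\omega_2$ with $\ell(\omega)=\ell(\omega_1)+\binom{k+2}{2}+\ell(\omega_2)$, i.e.\ some reduced word of $\omega$ contains a reduced word of $w_\star$ as a contiguous subword, so that $g_\omega=g_{\omega_1}g_{w_\star}g_{\omega_2}$ in $\cH{n}$. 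Second, an identification of the leading term of the defining quasi-idempotent: in the specialisation~\eqref{eq:param2} of $\cA_{k+1}$, expanded in the basis of Theorem~\ref{theo:basisA}, one has ${E'}_{k+1}^{(q,\alpha_1)}=c\,g_{w_\star}+\sum_{\tau\in B_{k+1}(\bar1\bar2),\ \ell_0(\tau)\leq k}c_\tau\,g_\tau$ with $c\neq0$ in $\bC^{(k)}[q^{\pm1}]$ --- the point being that $w_\star$ is the only basis element with $\ell_0=k+1$, while nonvanishing of $c$ is read off from the recursions~\eqref{eq:Erec} and~\eqref{eq:idemENtilde2}, using that the reductions coming from~\eqref{eq:relquotient} never increase $\ell_0$ (the case $k=1$ is the computation behind Proposition~\ref{prop-idempCn2} and relation~\eqref{eq:relquotient2}, and the leading term of $\tilde E^{(q,\alpha_2)}_{k+1}$ is computed cleanly from~\eqref{eq:idemENtilde2} by induction, no reduction being needed there). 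Since ${E'}_{k+1}^{(q,\alpha_1)}=0$ in $\cA_n^{(k)}$ by~\eqref{eq:relidem2} and Corollary~\ref{coro:Ek+1}, this gives $g_{w_\star}=-c^{-1}\sum_{\tau}c_\tau g_\tau$ in $\cA_n^{(k)}$, whence
\[ g_\omega \;=\; -c^{-1}\sum_{\tau}c_\tau\,g_{\omega_1}g_\tau g_{\omega_2}\,. \]
Each $g_{\omega_1}g_\tau g_{\omega_2}$ is a product of generators of word length $\ell(\omega_1)+\ell(\tau)+\ell(\omega_2)<\ell(\omega)$, hence (by the Hecke relation~\eqref{eq:cycHrel4} and the braid relations) a combination of $g_\nu$ with $\ell(\nu)<\ell(\omega)$, and the induction hypothesis applies.

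The main obstacle is the combinatorial lemma: producing, for a $\bar1\bar2$-avoiding signed permutation with $\ell_0\geq k+1$, a reduced word containing a reduced word of $w_\star$ as a contiguous factor. This is the one-boundary analogue of the classical fact that a permutation with a decreasing subsequence of length $k+1$ has a reduced word with the longest element of a rank-$k$ parabolic $\fS_{k+1}$ as a contiguous factor --- the mechanism behind pattern-avoiding bases of Temperley--Lieb type. I would expect it to follow from Stembridge's analysis of reduced words of (nearly) fully commutative elements of $B_n$ in \cite{Stem}, or from a direct manipulation of the canonical reduced forms $[n_1,m_1]\cdots[n_r,m_r]$ of Section~\ref{subsec-cyclo}. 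The secondary delicate point, but only a finite computation, is the non-vanishing of the coefficient $c$ above; once both are in hand, the induction closes and~\eqref{eq:spanAnk} spans $\cA_n^{(k)}$.
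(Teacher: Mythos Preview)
Your overall plan is the paper's plan: identify the unique top-$\ell_0$ element $g_{w_\star}$ in the $\avB$-basis expansion of ${E'}_{k+1}^{(q,\alpha_1)}$, check that its coefficient is a unit in $\bC^{(k)}[q^{\pm1}]$, and use the relation ${E'}_{k+1}^{(q,\alpha_1)}=0$ to eliminate any occurrence of $g_{w_\star}$ inside a word. The paper even proves your ``secondary delicate point'' precisely: it shows that the coefficient of $g_{w_\star}$ in $E_{k+1}^{(q,\alpha_1)}$ (in the $\avB$-basis of $\cA_{k+1}$) is $(-1)^{k+1}\alpha_2^{-(k+1)}q^{k(k+1)}[k+1]_q!$, so that after dividing by $[k+1]_q$ one is left with a unit. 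And your ``main obstacle'' (the combinatorial lemma) is handled in the paper not via \cite{Stem} but by a short direct manipulation of the standard form \eqref{eq:basecH}: any word with at least $k+1$ occurrences of $g_0$ contains, after commuting some $g_j$'s to the left through blocks $g_i\cdots g_1g_0$, the contiguous factor $g_{w_\star}$.

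There is, however, a genuine gap in your induction. You assert that after substituting $g_{w_\star}=-c^{-1}\sum_\tau c_\tau g_\tau$, each product $g_{\omega_1}g_\tau g_{\omega_2}$ has word length $\ell(\omega_1)+\ell(\tau)+\ell(\omega_2)<\ell(\omega)$. This would require $\ell(\tau)<\ell(w_\star)$ for every $\tau$ in the expansion, and that is false already for $k=1$: in the $\avB$-basis of $\cA_2$, the element $g_1g_0g_1$ (length $3=\ell(w_\star)$, but $\ell_0=1$) appears in $E_2^{(q,\alpha_1)}$ with coefficient $q^2\alpha_2^{-2}(\alpha_1-\alpha_2)$, which is nonzero after the specialisation. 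So your induction on $\ell$ does not close. The quantity that \emph{does} strictly decrease is $\ell_0$: every $\tau\neq w_\star$ in the expansion has $\ell_0(\tau)\leq k$, hence $g_{\omega_1}g_\tau g_{\omega_2}$ contains at most $\ell_0(\omega_1)+k+\ell_0(\omega_2)=\ell_0(\omega)-1$ copies of $g_0$; and the subsequent rewriting into standard (or $\avB$-)form never increases $\ell_0$, since all braid relations are homogeneous in $g_0$ and relation~\eqref{eq:relquotient} replaces $g_0g_1g_0g_1$ by terms with $\ell_0\leq 2$. This is exactly how the paper organises the argument: first reduce to $\ell_0\leq k$ by induction on $\ell_0$, then reduce to $\avB$ using~\eqref{eq:relquotient} while noting $\ell_0$ does not go back up. Replacing your length induction by a $\ell_0$-induction (or a lexicographic $(\ell_0,\ell)$-induction) fixes the argument.
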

\begin{proof}
Recall the basis $\{g_\omega\ |\ \omega\in B_n(\bar{1}\bar{2})\}$ of $\cA_n$ from Theorem \ref{theo:basisA}. There is a unique basis element containing $n$ occurrences of $g_0$ and all the others contain strictly less. In other words, there is a unique element $\omega\in B_n(\bar{1}\bar{2})$ with maximal $\ell_0$, which is $\ell_0(w)=n$. This element corresponds to the sequence $\bar{n},\overline{n-1},\dots,\bar{1}$, and the corresponding basis element is:
\begin{equation}\label{rewriting-k}
g_0\cdot g_1g_0\cdot\ldots\ldots\cdot g_{n-1}\dots g_1g_0\,.
\end{equation}
\begin{lem}
The element \eqref{rewriting-k} appears with coefficient $(-1)^n\alpha_2^{-n}q^{(n-1)n}[n]_q!$ when $E_{n}^{(q,\alpha_1)}$ is expanded in the basis $\{g_\omega\ |\ \omega\in B_n(\bar{1}\bar{2})\}$ of $\cA_n$.
\end{lem}
\begin{proof}[Proof of the lemma] The proof is by induction on $n$. The statement is immediate when $n=1$. Then we use the recurrence formula for the quasi-idempotent:
\[E^{(q,\alpha_1)}_{n+1} = E^{(q,\alpha_1)}_{n}\bigl(1+\sum_{i=1}^{n}q^{n-i+1}g_{n}\dots g_{i}-\alpha_2^{-1}q^n g_{n}\dots g_1g_0(1+\sum_{i=1}^{n}q^ig_1\dots g_i) \bigr)\ .\]
Using the induction hypothesis and keeping the only terms which can contribute to the relevant coefficient, we have to study:
\[-\gamma_{n}\alpha_2^{-1}q^{n}g_0\cdot g_1g_0\cdot\ldots\ldots\cdot g_{n-1}\dots g_1g_0\cdot g_{n}\dots g_1g_0(1+\sum_{i=1}^{n}q^ig_1\dots g_i)\,,\]
where $\gamma_n$ is the coefficient given by the induction hypothesis. For dealing with the terms in the sum, note that they produce the appearance of $g_0g_1g_0g_1$ (the leftmost $g_0$ is the $g_0$ just to the left of $g_n$). So we can use the defining relation \eqref{eq:relquotient} of $\cA_n$ and keep only the term not reducing the occurrences of $g_0$. This amounts to simply replacing $g_0g_1g_0g_1$ by $qg_0g_1g_0$. Thus all $g_1$'s in the sum are replaced by $q$. Similarly, if an element $g_i$ hits the expression on the left of the parenthesis, it will move through $g_n\dots g_1g_0$, $g_{n-1}\dots g_1g_0$, $...$, becoming successively $g_{i-1}$, $g_{i-2}$ and so on until it becomes $g_1$ and hits again a $g_0g_1g_0$. The same reasoning as before allows to replace it by $q$. So all generators $g_1,\dots,g_n$ appearing in the parenthesis are replaced by $q$ and we are left with $(1+q^2+\dots+q^{2n})=q^n[n+1]_q$. This proves the lemma.
\end{proof}
The lemma implies that in the renormalised quasi-idempotent ${E'}_{k+1}^{(q,\alpha_1)}$, the coefficient in front of the element \eqref{rewriting-k} with $n=k+1$ is
\[(-1)^{k+1}\alpha_2^{-(k+1)}q^{k(k+1)}[k]_q!\ ,\]
since we have divided ${E}_{k+1}^{(q,\alpha_1)}$ by $[k+1]_q$. This coefficient is invertible in the ring $\bC{(k)}[q^{\pm1}]$.

Now we will show that every element $g_{\omega}$ with $\ell_0(\omega)\geq k+1$ can be rewritten in $\cA_n^{(k)}$ in terms of elements $g_{\omega'}$ with $\ell_0(\omega')< k+1$. Indeed, if the number of occurrences of $g_0$ is at least $k+1$ then write $g_{\omega}$ in the standard form \eqref{eq:basecH} and pick $k+1$ consecutive occurrences of $g_0$. Then note using the braid relations that
\[g_j\cdot g_i\dots g_1g_0=g_i\dots g_1g_0\cdot g_{j+1}\ \ \ \ \forall 0<j<i\ .\]
This allows to find a subexpression of $g_{\omega}$ which is
\[g_{i_1}\dots g_1g_0\cdot g_{i_2}\dots g_1g_0\cdot\ldots\ldots\cdot g_{i_{k+1}}\dots g_1g_0\,,\ \ \ \ \ \ \text{with $i_1<i_2\dots <i_{k+1}$.}\]
Moving some $g_i$'s to the left using commutation relations, we see at once that the element \eqref{rewriting-k} with $n=k+1$ appears. Thanks to the lemma and its consequence stated just after, we can use the relation ${E'}_{k+1}^{(q,\alpha_1)}=0$ of $\cA_n^{(k)}$ to rewrite this element in terms of $g_{\omega'}$ with $\ell_0(\omega')< k+1$. Thus we have strictly reduced the number of occurrences of $g_0$, and by induction we conclude that $\cA_n^{(k)}$ is spanned by elements $g_{\omega'}$ with $\ell_0(\omega')< k+1$.

Now if an element $\omega$ with $\ell_0(\omega)<k+1$ has the pattern $\bar{1}\bar{2}$ then as shown in the proof of Theorem \ref{theo:basisA} we can use the defining relation of $\cA_n$ allowing to rewrite $g_0g_1g_0g_1$ to write $g_\omega$ in terms of $g_{\omega'}$ with $\omega'\in B_n(\bar{1}\bar{2})$. In doing so, note that the 0-length $\ell_0$ never increases since the relation we use rewrites $g_0g_1g_0g_1$ in terms of elements with $2$ occurences of $g_0$ or less.
\end{proof}

\paragraph{Isomorphism theorem.} We are now ready to state the main result of this section.
\begin{thm}\label{thm-iso1}
For any $n$, there is an algebra isomorphism from $\cA^{(k)}_n$ to $\fH$ given by:
 \begin{equation}\label{prop:homo}
	\begin{array}{rl}
		\phi\ :\ \cA^{(k)}_{n} &\to \fH \\
		1 &\mapsto P_k\\
		g_i &\mapsto S_i \quad i=0,1,\dots,n-1\, .
	\end{array}
	\end{equation}
\end{thm}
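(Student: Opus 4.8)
The plan is to prove that $\phi$ is a well‑defined surjective algebra homomorphism and then to upgrade surjectivity to bijectivity by a rank count. First I would check that $P_k$ and $S_0,\dots,S_{n-1}$ satisfy the defining relations of $\cA_n^{(k)}$, that is, the relations \eqref{eq:cycHrel1}--\eqref{eq:cycHrel5} of $\cH{n}$ specialised at $\alpha_1=q^{-2}$, $\alpha_2=q^{2k}$, together with $E_2^{(-q^{-1},\alpha_2)}=0$ and, when $n>k$, ${E'}_{k+1}^{(q,\alpha_1)}=0$. The type-$A$ Hecke and braid relations among $S_1,\dots,S_{n-1}$ are immediate from \eqref{eq:Sialg}, since $\si_{k+1},\dots,\si_{k+n-1}$ obey them in $H_{k+n}$ and commute with $P_k$; the relation $S_0S_i=S_iS_0$ for $i\geq 2$ holds because $S_0$ involves only $\si_1,\dots,\si_k$; and the quadratic relation $(S_0-q^{-2}P_k)(S_0-q^{2k}P_k)=0$ is exactly Proposition \ref{prop-eigS0}. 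The one genuinely computational relation of $\cH{n}$ is the type-$B$ braid relation $S_0S_1S_0S_1=S_1S_0S_1S_0$, which I would verify inside $H_{k+n}$ by first rewriting $S_0$ through \eqref{eq:relST} in terms of $T=P_k\si_kP_k$ and then using the braid relations and $P_k\si_i=qP_k$ for $i<k$ (the identity \eqref{eq:Tchar} can shorten the bookkeeping).

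Next I would establish the two quasi-idempotent relations $\phi(E_2^{(-q^{-1},\alpha_2)})=0$ and, for $n>k$, $\phi({E'}_{k+1}^{(q,\alpha_1)})=0$. Since these are identities with coefficients in the integral domain $\bC^{(k)}[q^{\pm1}]\subset\bC(q)$, it suffices to prove them after extending scalars to $\bC(q)$. There I would use the description of $Irr(\bC(q)\fH)$ from \cite{CP} — partitions $\lambda\vdash k+n$ with $\lambda_1\geq k$, with the eigenvalues of $S_0,S_1,\dots,S_{n-1}$ as in \eqref{1dimfH} and branching by box addition — to check that none of these representations contains, on restriction to the subalgebra generated by $S_0,S_1$ (resp. $S_0,\dots,S_{k-1}$), the one-dimensional constituent to which $E_2^{(-q^{-1},\alpha_2)}$ (resp. ${E'}_{k+1}^{(q,\alpha_1)}$) is proportional; hence the image of each quasi-idempotent acts by $0$ in every irreducible of $\bC(q)\fH$ and is therefore $0$. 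On the locus where semisimplicity of the relevant Hecke algebras fails one passes to a generic specialisation of $q$ or argues by continuity from the semisimple locus; alternatively both vanishings can be obtained by a direct computation in $H_{k+n}$ from the explicit expansions of these elements in Proposition \ref{prop-idempAn}, Corollary \ref{coro:Ek+1} and the Lemma in the proof of Proposition \ref{prop-spanAnk}. Together with the first step this makes $\phi$ a well-defined algebra homomorphism.

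Finally I would deduce bijectivity from a dimension comparison. Surjectivity of $\phi$ is the statement that $P_k$ and the $S_i$ generate $\fH=P_kH_{k+n}P_k$; this is the fusion procedure of \cite{CP}, where one shows by induction on $n$ and on the length of the minimal double-coset representative that each standard basis element $P_k\si_\omega P_k$ with $\omega\in\cosets$ is a product of the $S_i$. By Proposition \ref{prop-spanAnk}, the module $\cA_n^{(k)}$ is spanned over $\bC^{(k)}[q^{\pm1}]$ by the $|B_n(\bar{1}\bar{2},\overline{k+1}\bar k\dots\bar1)|=\sum_{i=0}^{\min(k,n)}\binom{n}{i}^2(n-i)!$ elements of \eqref{eq:spanAnk}, and by \eqref{eq:dimfH} this number equals the rank of the free $\bC^{(k)}[q^{\pm1}]$-module $\fH$ (with its standard double-coset basis). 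Since a surjection from a module generated by $d$ elements onto a free module of rank $d$ over a commutative ring is automatically an isomorphism, applying this to $\phi$ shows that $\phi$ is bijective and, as a by-product, that $\cA_n^{(k)}$ is free over $\bC^{(k)}[q^{\pm1}]$ with basis $\{g_\omega\mid\omega\in B_n(\bar{1}\bar{2},\overline{k+1}\bar k\dots\bar1)\}$.

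The step I expect to be the main obstacle is the vanishing of $\phi({E'}_{k+1}^{(q,\alpha_1)})$ when $n>k$: this is exactly the extra fusion relation distinguishing $\cA_n^{(k)}$ from the naive specialisation of $\cA_n$, and the specialised $\cH{n}$ ceases to be semisimple precisely for $n\geq k+2$, so the clean representation-theoretic shortcut has to be backed up either by the explicit coefficient computation of the Lemma in the proof of Proposition \ref{prop-spanAnk} or by a deformation argument off the semisimple locus.
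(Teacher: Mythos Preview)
Your three–step architecture (morphism, surjectivity, dimension count for injectivity) matches the paper exactly, and the injectivity argument is identical. The tactical differences are worth noting.

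For the relations, the paper does not use representation theory to get $\phi(E_2^{(-q^{-1},\alpha_2)})=0$. Instead it computes directly: after rewriting $S_0$ in terms of $T$ via \eqref{eq:relST}, a short calculation with $P_k$ (the paper's Lemma~\ref{lem:TSTS}) yields the identity $TS_1TS_1-qTS_1T=q^{1-k}[k]_q^{-1}(S_1T-q^{-1}S_1TS_1)$, which translates back to exactly the relation \eqref{eq:relquotient}. This avoids the delicate point in your plan of analysing how $S_0,S_1$ act in every irreducible of $\bC(q)\fH$ (which is doable from \cite{CP} but is more work, and one has to be careful not to use Remark~\ref{rem-repfH} circularly). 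For the type-$B$ braid relation $S_0S_1S_0S_1=S_1S_0S_1S_0$ the paper gives a diagrammatic/topological argument rather than an algebraic one. For ${E'}_{k+1}^{(q,\alpha_1)}=0$ when $n>k$, the paper does use your representation-theoretic shortcut: nonvanishing would force a one-dimensional $\bC(q)\fH$-representation with $(S_0,S_i)\mapsto(q^{-2},q)$, contradicting the list \eqref{1dimfH}.

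Your surjectivity sketch is the weakest point. The statement that $P_k$ and the $S_i$ generate $\fH$ is not something you can simply cite from \cite{CP}; the paper proves it here by an explicit induction, introducing auxiliary elements $U_i=P_k\si_{k,i}\si_{k-1,i}\cdots\si_{k-i+1,i}P_k$ and showing inductively (using the recursion \eqref{eq:PkPk-1} for $P_k$) that each $U_{i+1}$ lies in the subalgebra generated by $T,S_1,\dots,S_{n-1}$ and $U_i$. Your phrase ``induction on $n$ and on the length of the minimal double-coset representative'' gestures at this, but the actual mechanism is the expansion of the middle $P_k$ in $TS_1\cdots S_iU_i$, which isolates $U_{i+1}$ up to an invertible factor plus terms already in the image. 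You would need to supply this computation.
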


\begin{coro}\label{coro-basis}
Over $\bC^{(k)}[q^{\pm1}]$, the following set is a basis of $\cA_n^{(k)}$ and its image by $\phi$ is a basis of $\fH$:
\begin{equation*}
\{g_\omega\ |\ \omega\in B_n(\bar{1}\bar{2},\overline{k+1}\bar{k}\dots\bar{1})\}\ .
\end{equation*}
\end{coro}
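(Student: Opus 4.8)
The plan is to prove that $\phi$ is a well-defined surjective algebra homomorphism and then to close the argument with a dimension count, which simultaneously yields Corollary~\ref{coro-basis}. For well-definedness one must check that $P_k$ (as the unit) and the elements $S_0,\dots,S_{n-1}$ satisfy the defining relations of $\cA_n^{(k)}$, namely \eqref{eq:cycHrel1}--\eqref{eq:cycHrel5} specialised at $\alpha_1=q^{-2}$, $\alpha_2=q^{2k}$, the relation \eqref{eq:relquotient}, and, when $n>k$, the relation \eqref{eq:relidem2}. Most of these are immediate: from $S_i=P_k\si_{k+i}=\si_{k+i}P_k$ together with the braid and quadratic relations of $H_{k+n}$ and $P_k^2=P_k$ one gets \eqref{eq:cycHrel1}, \eqref{eq:cycHrel3} (for $i,j\geq1$) and \eqref{eq:cycHrel4}; the commutations $S_0S_j=S_jS_0$ for $j\geq2$ hold because $S_0\in P_kH_{k+1}P_k$ while $\si_{k+j}$ commutes with $\si_1,\dots,\si_k$; and \eqref{eq:cycHrel5} is exactly Proposition~\ref{prop-eigS0}. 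Two families of relations require genuine work: the quartic braid relation $S_0S_1S_0S_1=S_1S_0S_1S_0$ and the two quasi-idempotent relations.

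For the quartic relation I would exploit that, by \eqref{eq:S0alg}, $S_0=P_kL_{k+1}P_k$ where $L_{k+1}=\si_k\si_{k-1}\cdots\si_2\si_1^2\si_2\cdots\si_{k-1}\si_k$ is the $(k+1)$-st Jucys--Murphy element of $H_{k+n}$: in $H_{k+n}$ one has $L_{k+2}=\si_{k+1}L_{k+1}\si_{k+1}$ and $L_{k+1}L_{k+2}=L_{k+2}L_{k+1}$, whence $\si_{k+1}L_{k+1}\si_{k+1}L_{k+1}=L_{k+1}\si_{k+1}L_{k+1}\si_{k+1}$, and one then pushes the symmetrisers through with $\si_iP_k=qP_k$ for $i<k$; equivalently one reduces via \eqref{eq:relST} to a short identity between $T=P_k\si_kP_k$ and $S_1$. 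For $E_2^{(-q^{-1},\alpha_2)}=0$ and (when $n>k$) ${E'}_{k+1}^{(q,\alpha_1)}=0$ I would use that $\fH$ is free over $\bC^{(k)}[q^{\pm1}]$ with its standard basis, so it embeds in the semisimple algebra $\bC(q)\fH=P_kH_{k+n}(\bC(q))P_k$, whence it suffices to check the vanishing there. Now $E_2^{(-q^{-1},\alpha_2)}$ (resp.\ ${E'}_{k+1}^{(q,\alpha_1)}$) is a nonzero scalar multiple of the primitive central idempotent of $\cH{2}$ (resp.\ $\cH{k+1}$) at the specialised parameters attached to the one-dimensional module with $g_0\mapsto q^{2k}$, $g_i\mapsto -q^{-1}$ (resp.\ $g_0\mapsto q^{-2}$, $g_i\mapsto q$); since these algebras are semisimple over $\bC(q)$ at the specialised parameters when $k\geq1$, the vanishing in $\bC(q)\fH$ is equivalent to saying that these one-dimensional modules do not occur as constituents of any irreducible $\bC(q)\fH$-module pulled back along $\phi$. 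Realising the irreducibles of $\bC(q)\fH$ as $V_\lambda=P_kS^\lambda\subseteq S^\lambda$ for $\lambda\vdash k+n$ with $\lambda_1\geq k$, and using $S_iv=\si_{k+i}v$ and $S_0v=P_kL_{k+1}v$ for $v\in V_\lambda$, one sees that such a constituent would force, via the Jucys--Murphy eigenvalues permitted by the shape of $\lambda$, the impossible identity $q^{2k}=q^{-2}$; for $n\leq k$ this is already visible from the bijection in Remark~\ref{rem-repfH}. (Alternatively, both identities are finite computations inside the small corner algebras $P_kH_{k+2}P_k$ and $P_kH_{2k+1}P_k$.)

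For surjectivity, note that the image of $\phi$ contains $P_k$, $S_1,\dots,S_{n-1}$ and, by \eqref{eq:relST} together with the invertibility of $(q-q^{-1})q^{k-1}[k]_q$ in $\bC^{(k)}[q^{\pm1}]$, also $T=P_k\si_kP_k$; these generate $\fH=P_kH_{k+n}P_k$, as is clear from the diagrammatic description of $\fH$, every fused permutation diagram being a composition of the elementary diagrams $S_i$, $S_0$, $T$ (see \cite{CP,P1}). By Proposition~\ref{prop-spanAnk}, $\cA_n^{(k)}$ is spanned over $\bC^{(k)}[q^{\pm1}]$ by the elements $g_\omega$ with $\omega\in B_n(\bar{1}\bar{2},\overline{k+1}\bar{k}\dots\bar{1})$, and by \eqref{eq:cardavoid-k} and \eqref{eq:dimfH} the number of these equals the rank of the free $\bC^{(k)}[q^{\pm1}]$-module $\fH$. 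Since $\phi$ is onto, the images $\phi(g_\omega)$ span $\fH$; a generating set of a free module of rank $r$ over a commutative Noetherian ring that has exactly $r$ elements is automatically a basis (a surjective endomorphism of a finitely generated module is injective), so $\{\phi(g_\omega)\}$ is a basis of $\fH$. Hence the $g_\omega$ are linearly independent in $\cA_n^{(k)}$, form a basis of it, and $\phi$ carries a basis to a basis; therefore $\phi$ is an isomorphism, proving Theorem~\ref{thm-iso1} and Corollary~\ref{coro-basis} at once.

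The crux is the verification of the two quasi-idempotent relations in $\fH$: along the representation-theoretic route it rests on the (standard) freeness of $\fH$ over $\bC^{(k)}[q^{\pm1}]$, needed to pass to the semisimple setting, and then on a careful bookkeeping of Jucys--Murphy eigenvalues to exclude the unwanted one-dimensional constituents; along the direct route it is an elementary but lengthy computation with the $q$-symmetriser inside $P_kH_{k+2}P_k$ and $P_kH_{2k+1}P_k$.
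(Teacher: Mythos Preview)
Your approach is essentially the paper's: prove Theorem~\ref{thm-iso1} by checking that $\phi$ is well-defined and surjective, then invoke Proposition~\ref{prop-spanAnk} and the dimension formula \eqref{eq:dimfH} to conclude that the spanning set is a basis and that $\phi$ is an isomorphism. The differences are only in the sub-verifications: you establish \eqref{eq:cycHrel2} via the Jucys--Murphy commutation $L_{k+1}L_{k+2}=L_{k+2}L_{k+1}$ (the paper argues diagrammatically), and you check \eqref{eq:relquotient} by the same representation-theoretic trick the paper uses for \eqref{eq:relidem2} (the paper instead computes directly via Lemma~\ref{lem:TSTS}); conversely, your surjectivity step is only sketched, whereas the paper carries out an explicit induction showing each $U_i$ lies in the image of $\phi$.
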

\begin{proof}
This  follows from the fact that this set is a spanning set (Proposition \ref{prop-spanAnk}) with cardinality given by \eqref{eq:cardavoid-k}, which is equal to the dimension of $\fH$, and from the isomorphism of $\cA_n^{(k)}$ with $\fH$.    
\end{proof}

\subsection{Proof of Theorem \ref{thm-iso1}}

\paragraph{The morphism property of $\phi$.} By definition, the projector $P_k$ acts as the unit element in $\fH$. We must verify that the elements $S_i \in \fH$ for $i=0,1,\dots,n$ satisfy the same defining relations \eqref{eq:cycHrel1}--\eqref{eq:cycHrel5}, \eqref{eq:relquotient} and \eqref{eq:relidem2} as the elements $g_i \in \cA^{(k)}_{n}$. Relations \eqref{eq:cycHrel1}, \eqref{eq:cycHrel3} for $i,j\neq 0$ and \eqref{eq:cycHrel4}  directly follow from the definition of the fused Hecke algebra. It is also readily apparent that \eqref{eq:cycHrel3} for $i=0$ is verified, since $S_0$ and $S_j$ with $j\geq 2$ are elements that each act on different strands. In diagrams, we have (ignoring unaffected strands in the illustrations for simplicity):
\begin{equation}
\begin{tikzpicture}[scale=0.3,baseline={([yshift=\eseq]current bounding box.center)}]
			\ellk{0}{0}
			\ellstrand{0}{0}
			\slab{(4,3)}{$1$}
			\encell{4}{2}
			\slab{(8,3)}{$i$}
			\strand{8}{2}
			\slab{(12,3)}{$i+1$}
			\strand{12}{2}
			\begin{scope}[yshift=-4cm]
			\ellk{0}{0}
			\ellstrand{0}{0}
			\strand{4}{2}
			\node at (6,2) {$\dots$};
			\ocross{8}{2}
			\end{scope}
\end{tikzpicture} = 
\begin{tikzpicture}[scale=0.3,baseline={([yshift=\eseq]current bounding box.center)}]
			\ellk{0}{0}
			\ellstrand{0}{0}
			\slab{(4,3)}{$1$}
			\encell{4}{2}
			\slab{(8,3)}{$i$}
			\slab{(12,3)}{$i+1$}
			\node at (6,0) {$\dots$};
			\ocross{8}{2}
\end{tikzpicture} = 
\begin{tikzpicture}[scale=0.3,baseline={([yshift=\eseq]current bounding box.center)}]
			\ellk{0}{0}
			\ellstrand{0}{0}
			\slab{(4,3)}{$1$}
			\strand{4}{2}
			\slab{(8,3)}{$i$}
			\slab{(12,3)}{$i+1$}
			\ocross{8}{2}
			\begin{scope}[yshift=-4cm]
			\ellk{0}{0}
			\ellstrand{0}{0}
			\encell{4}{2}
			\strand{8}{2}
			\strand{12}{2}
			\node at (6,2) {$\dots$};
			\end{scope}
\end{tikzpicture}.
\end{equation}  
	
	Consider now relation \eqref{eq:cycHrel2}. We start by computing the following (again ignoring unaffected strands):
	\begin{align}
		S_1S_0S_1 &= 
		\begin{tikzpicture}[scale=0.3,baseline={([yshift=\eseq]current bounding box.center)}]
			\ellk{0}{0}
			\ellstrand{0}{0}
			\slab{(4,3)}{$1$}
			\slab{(8,3)}{$2$}
			\ocross{4}{2}				
			\begin{scope}[yshift=-4cm]
			\ellk{0}{0}
			\ellstrand{0}{0}
			\encell{4}{2}
			\strand{8}{2}
			\end{scope}			
			\begin{scope}[yshift=-8cm]
			\ellk{0}{0}
			\ellstrand{0}{0}
			\ocross{4}{2}
			\end{scope}			
		\end{tikzpicture}
		=
		\begin{tikzpicture}[scale=0.3,baseline={([yshift=\eseq]current bounding box.center)}]
			\ellk{0}{0}
			\ellstrand{0}{0}
			\slab{(4,3)}{$1$}
			\strand{4}{2}
			\slab{(8,3)}{$2$}
			\encells{8}{2}					
		\end{tikzpicture} .
	\end{align}
Now, in order to show that \eqref{eq:cycHrel2} holds for the images,  one must show that $S_0$ commutes with $S_1S_0S_1$. This is seen using the homotopy of diagrams and the fact that an ellipse can be replaced by a sum of braids acting on the $k$ fused strands. This means that the strand labeled $2$, which encircles both the strand $1$ and the $k$ fused strands, can be moved up or down around any middle ellipse and around all encircled strands, hence the commutativity. In diagrams, we have:  
\begin{align}
		\begin{tikzpicture}[scale=0.3,baseline={([yshift=\eseq]current bounding box.center)}]
			\draw[white] (-1.4,1) arc (90:270:1.2);
			\ellk{0}{0}
			\ellstrand{0}{0}
			\slab{(4,3)}{$1$}
			\slab{(8,3)}{$2$}
			\encell{4}{2}
			\strand{8}{2}				
			\begin{scope}[yshift=-4cm]
			\ellk{0}{0}
			\ellstrand{0}{0}
			\strand{4}{2}
			\encells{8}{2}
			\end{scope}		
		\end{tikzpicture} =
		\begin{tikzpicture}[scale=0.3,baseline={([yshift=\eseq]current bounding box.center)}]
			\ellk{0}{0}
			\ellstrand{0}{0}
			\slab{(4,3)}{$1$}
			\slab{(8,3)}{$2$}
			\encell{4}{2}
			\encellsu{8}{2}					
		\end{tikzpicture} = 
		\begin{tikzpicture}[scale=0.3,baseline={([yshift=\eseq]current bounding box.center)}]
			\draw[white] (-1.4,1) arc (90:270:1.2);
			\ellk{0}{0}
			\ellstrand{0}{0}
			\slab{(4,3)}{$1$}
			\slab{(8,3)}{$2$}
			\strand{4}{2}
			\encells{8}{2}					
			\begin{scope}[yshift=-4cm]
			\ellk{0}{0}
			\ellstrand{0}{0}
			\encell{4}{2}
			\strand{8}{2}	
			\end{scope}		
		\end{tikzpicture} .
	\end{align}
	
The quadratic relation \eqref{eq:cycHrel5}  for $g_0$ is preserved thanks to Proposition \ref{prop-eigS0}.

Then to show that Relation \eqref{eq:relquotient} is preserved, we start with a lemma. 
\begin{lem} \label{lem:TSTS}
For $k$ any positive integer, the following relations hold
\begin{align}
	&TS_1TS_1 -qTS_1T
	=q^{1-k}[k]_q^{-1}(S_1T-q^{-1}S_1TS_1), \label{eq:TSTS}\\ 
	&S_1TS_1T -qTS_1T
	=q^{1-k}[k]_q^{-1}(TS_1-q^{-1}S_1TS_1). \label{eq:STST}
\end{align}
\end{lem}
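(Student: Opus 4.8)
The plan is to push the computation back into the Hecke algebra $H_{k+n}$, where $T=P_k\sigma_kP_k$ and $S_1=\sigma_{k+1}P_k=P_k\sigma_{k+1}$, and to reduce both \eqref{eq:TSTS} and \eqref{eq:STST} to a single vanishing statement about the $q$-symmetriser $P_{k+1}$. Since $P_{k+1}$ and $P_{k+2}$ only exist after inverting $[k+1]_q,[k+2]_q$, I would first observe that this is harmless: both sides of \eqref{eq:TSTS} lie in $\fH$ with scalar coefficients in $\bC^{(k)}[q^{\pm1}]$, and $\fH$ is free (hence torsion-free) over $\bC^{(k)}[q^{\pm1}]$, so $\fH\hookrightarrow\bC(q)\fH$ and it suffices to argue over $\bC(q)$; note also that $S_1$ requires $n\geq2$, so $P_{k+2}\in H_{k+n}$.

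First I would record two fusion identities. Sandwiching the recursion \eqref{eq:PkPk-1} (with $k$ replaced by $k+1$) between two copies of $P_k$ and collapsing each resulting term $P_k\sigma_k\sigma_{k-1}\cdots\sigma_jP_k$ via $\sigma_iP_k=qP_k$ for $i\leq k-1$ --- the very manipulation used to obtain \eqref{eq:Tchar} --- gives
\[T+q^{-k}[k]_q^{-1}P_k=\frac{[k+1]_q}{[k]_q}P_{k+1}\,,\qquad\text{i.e.}\qquad T=\frac{[k+1]_q}{[k]_q}P_{k+1}-\frac{1}{q^k[k]_q}P_k\,,\]
and the same computation one level up gives $P_{k+1}\sigma_{k+1}P_{k+1}=\frac{[k+2]_q}{[k+1]_q}P_{k+2}-\frac{1}{q^{k+1}[k+1]_q}P_{k+1}$.

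Next I would rewrite \eqref{eq:TSTS} in factored form. As $P_k$ is the unit, $TS_1TS_1-qTS_1T=TS_1T(S_1-qP_k)$ and $q^{1-k}[k]_q^{-1}(S_1T-q^{-1}S_1TS_1)=-q^{-k}[k]_q^{-1}S_1T(S_1-qP_k)$, so \eqref{eq:TSTS} is equivalent to $(T+q^{-k}[k]_q^{-1}P_k)S_1T(S_1-qP_k)=0$, hence --- by the first fusion identity --- to
\[P_{k+1}\,S_1\,T\,(S_1-qP_k)=0\,.\]
Substituting $T=\frac{[k+1]_q}{[k]_q}P_{k+1}-\frac{1}{q^k[k]_q}P_k$ splits the left side into two pieces. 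In the $P_{k+1}$-piece one uses $S_1P_{k+1}=\sigma_{k+1}P_{k+1}$, $S_1-qP_k=(\sigma_{k+1}-q)P_k$ and the second fusion identity; the $P_{k+2}$-term dies because $P_{k+2}(\sigma_{k+1}-q)=0$, leaving $-q^{-(k+1)}[k]_q^{-1}P_{k+1}(\sigma_{k+1}-q)P_k$. In the $P_k$-piece one uses $S_1P_k=S_1$, $P_k\sigma_{k+1}=\sigma_{k+1}P_k$, and the Hecke relation in the form $\sigma_{k+1}^2-q\sigma_{k+1}=-q^{-1}(\sigma_{k+1}-q)$, leaving $+q^{-(k+1)}[k]_q^{-1}P_{k+1}(\sigma_{k+1}-q)P_k$. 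The two pieces cancel, which proves \eqref{eq:TSTS}.

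Finally, \eqref{eq:STST} comes for free: the anti-automorphism $\iota$ of $H_{k+n}$ fixing each $\sigma_i$ and reversing products satisfies $\iota(P_k)=P_k$ (since $\ell(\omega)=\ell(\omega^{-1})$), hence restricts to an anti-automorphism of $\fH$ with $\iota(T)=T$ and $\iota(S_1)=S_1$; applying it to \eqref{eq:TSTS} reverses each monomial and yields exactly \eqref{eq:STST} (equivalently, the mirror of the previous paragraph reduces \eqref{eq:STST} to $(S_1-qP_k)TS_1P_{k+1}=0$, the $\iota$-image of the identity just established). The only genuine difficulty is the bookkeeping in the two-piece cancellation and the verification of the fusion identities; there is no conceptual obstacle --- this is a computational normalisation lemma serving the later check that relation \eqref{eq:relquotient} holds for the images $S_i$.
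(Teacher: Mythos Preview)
Your proof is correct but follows a genuinely different route from the paper's. The paper expands the \emph{middle} projector in $TS_1T=P_k\sigma_k\sigma_{k+1}P_k\sigma_kP_k$ via the recursion \eqref{eq:PkPk-1}, obtaining $TS_1T=q^{1-k}[k]_q^{-1}\bigl(S_1TS_1+q^{k-1}[k-1]_qP_k\sigma_k\sigma_{k-1}\sigma_{k+1}\sigma_kP_k\bigr)$; it then right-multiplies by $S_1$ to get an analogous expression for $TS_1TS_1$, and the auxiliary element $P_k\sigma_k\sigma_{k-1}\sigma_{k+1}\sigma_kP_k$ cancels upon forming the combination $TS_1TS_1-qTS_1T$. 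Equation \eqref{eq:STST} is obtained by left-multiplying instead. You, by contrast, first factor the statement as $(T+q^{-k}[k]_q^{-1}P_k)S_1T(S_1-qP_k)=0$, invoke the higher symmetrisers $P_{k+1},P_{k+2}$ through the fusion identities, and reduce everything to the transparent fact $P_{k+2}(\sigma_{k+1}-q)=0$; the second equation then drops out of the anti-automorphism $\iota$. The paper's computation has the advantage of staying entirely inside $\bC^{(k)}[q^{\pm1}]$ with no appeal to $P_{k+1}$ or $P_{k+2}$ (so no extension-of-scalars argument is needed), while your approach explains \emph{why} the identity holds --- it is the shadow of the absorption property of the next two symmetrisers --- and the anti-automorphism saves repeating the calculation for \eqref{eq:STST}.
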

\begin{proof}
	Similarly as has been done before in this section, one can use the algebraic expressions of $T$ and $S_1$, the properties of the projector $P_k$ as well as the braid relations to show that 
	\begin{align}
	TS_1T 
	= P_k\si_k\si_{k+1}P_k\si_kP_k
	= q^{1-k}[k]_q^{-1}(S_1TS_1+q^{k-1}[k-1]_qP_k\si_k\si_{k-1}\si_{k+1}\si_kP_k).
\end{align}
Multiplying the previous equation by $S_1$ on the right, and then using the Hecke relation and braid relations, it is found that
\begin{align}
	TS_1TS_1 
	&= q^{1-k}[k]_q^{-1}((q-q^{-1})S_1TS_1+S_1T+q^{k}[k-1]_qP_k\si_{k}\si_{k-1}\si_{k+1}\si_kP_k).
\end{align}  
Therefore, combining the two previous results, relation \eqref{eq:TSTS} is found. To obtain \eqref{eq:STST}, multiply by $S_1$ on the left instead.
\end{proof}
One can now use \eqref{eq:relST} to write equation \eqref{eq:TSTS} in terms of $S_0$, which gives
\begin{equation}
	S_0S_1S_0S_1
	=-q^{-2} P_k+q^{-3} S_1+S_0-q^{-1} (S_0 S_1+S_1S_0)+q^{-2}S_1S_0S_1+qS_0S_1S_0.
\end{equation}
The previous relation corresponds to \eqref{eq:relquotient} with parameters $\alpha_1$ and $\alpha_2$ as in \eqref{eq:param2}.

Finally, when $n>k$, we must show that the additional relation \eqref{eq:relidem2} is satisfied in $\fH$. If ${E'}_{k+1}^{(q,\alpha_1)}$ were non-zero, this would imply that this element would also be non-zero in the algebra $\fH$ extended over $\bC(q)$. This in turn would imply the existence of a one-dimensional representation $S_0\mapsto q^{-2}$ and $S_i\mapsto q$, $i\geq 1$. We already discussed the non-existence of such a one-dimensional representation around \eqref{1dimfH}.

\paragraph{Surjectivity of $\phi$.} For $i=1,2,\dots,\min \{k,n\}$, we define the element $U_{i} \in \fH$ which consists of the diagram where the $i$ last strands of the top ellipse go out to the $i$ first bottom circles (without crossing each other), and similarly for the strands of the bottom ellipse. It is illustrated as follows:  
\begin{align}
	&U_{i} :=
	\begin{tikzpicture}[scale=0.3,baseline={([yshift=\eseq]current bounding box.center)}]
		\draw[white] (-1.4,0.8) arc (90:270:0.8);
		\ellU{0}{0}
		\node at (6,0) {$\dots$};
		\slab{(16,3)}{$i+1$}
		\strand{16}{2}
		\node at (18,0) {$\dots$};
		\slab{(20,3)}{$n$}
		\strand{20}{2}
		\slab{(4,3)}{$1$}
		\slab{(8,3)}{$i-1$}
		\slab{(12,3)}{$i$}
	\end{tikzpicture}, \quad i=1,2,\dots,\min \{k,n\}.
\end{align}
Here it is understood that there are $k-i$ straight strands in the gray zone. If we denote
\begin{equation}
	\si_{k,i} := \si_{k}\si_{k+1}\dots \si_{k+i-1},
\end{equation}
then the algebraic expression of the element $U_i$ is given by
\begin{equation}
	U_i = P_k\si_{k,i}\si_{k-1,i}\dots \si_{k-i+1,i} P_k. \label{eq:Uialg}
\end{equation} 


The elements $P_k$, $U_i$ for $i=1,2,\dots,\min \{k,n\}$ and $S_i$ for $i=1,2,\dots,n-1$ generate the fused Hecke algebra $\fH$. Indeed, a basis for $\fH$ consists of the set of fused braid diagrams corresponding to fused permutations, which were described when counting the dimension \eqref{eq:dimfH} of $\fH$. A generic fused braid diagram can be obtained by multiplying the element $U_i$ on the left and on the right by elements $S_j$ for $j=1,\dots,n-1$.

We are now ready to prove that the map $\phi$ is surjective. It suffices to show that the generators of $\fH$ belong to the image of $\phi$. We already know that $P_k$ and $S_i$ for $i=1,\dots,n-1$ belong to the image by definition of the map $\phi$. We will show that it is also the case for the elements $U_i$ by induction on $i$.
	
For $i=1$, we have $U_1=T$, which belongs to the image because of \eqref{eq:relST}. Suppose now that $U_i$ belongs to the image for some integer $1\leq i < \min \{k,n\}$. Then, the following element of $\fH$ also belongs to the image of $\phi$:    
\begin{equation}
	TS_{1}S_{2}\dots S_{i}U_{i} =
	\begin{tikzpicture}[scale=0.3,baseline={([yshift=\eseq]current bounding box.center)}]
		\draw[white] (-1.4,0.8) arc (90:270:0.8);
		\ellTii{0}{0}
		\draw[thick] (0,2) -- (0,-2);
		\draw[thick] (0.5,2) -- (0.5,-2);
		\node at (6,0) {$\dots$};
		\node at (22,-2) {$\dots$};
		\slab{(20,3)}{$i+2$}
		\strand{20}{2}
		\slab{(24,3)}{$n$}
		\strand{24}{2}
		\begin{scope}[yshift=-4cm]
		\draw[white] (-1.4,0.8) arc (90:270:0.8);
		\ellU{0}{0}
		\node at (6,0) {$\dots$};
		\strand{16}{2}
		\strand{20}{2}
		\strand{24}{2}
		\end{scope}
	\end{tikzpicture}.
\end{equation}
Using the algebraic expressions \eqref{eq:Talg}, \eqref{eq:Sialg} and \eqref{eq:Uialg} as well as the properties of the projector $P_k$, we can write
\begin{equation}
	TS_{1}S_{2}\dots S_{i}U_{i} 
	= P_k\si_{k,i+1}P_k\si_{k,i}\si_{k-1,i}\dots \si_{k-i+1,i} P_k.
\end{equation}
Using now the property \eqref{eq:PkPk-1} for the middle projector, we get
\begin{equation}
	TS_{1}S_{2}\dots S_{i}U_{i} 
	= q^{1-k}[k]_q^{-1}\sum_{m=1}^{k} q^{k-m}P_k\si_{k,i+1}(\si_{k-1}\si_{k-2}\dots\si_{m})\si_{k,i}\si_{k-1,i}\dots \si_{k-i+1,i} P_k,
\end{equation} 
where the interior of the parenthesis in the case $m=k$ is understood to be $1$.  
Separating the previous sum in two at $m=k-i$, and absorbing braid generators in the right-most projector $P_k$ for the terms with $m<k-i$, we find
\begin{align}
	TS_{1}S_{2}\dots S_{i}U_{i}=& [k]_q^{-1}[k-i]_qP_k\si_{k,i+1}(\si_{k-1}\si_{k-2}\dots\si_{k-i})\si_{k,i}\si_{k-1,i}\dots \si_{k-i+1,i} P_k \label{eq:TSU}  \\
	+&q[k]_q^{-1}\sum_{m=k-i+1}^{k} q^{-m}P_k\si_{k,i+1}(\si_{k-1}\si_{k-2}\dots\si_{m})\si_{k,i}\si_{k-1,i}\dots \si_{k-i+1,i} P_k. \nonumber   
\end{align}
All terms in the sum of the previous equation do not act on the $k-i$ first fused strands. Therefore, these terms correspond to diagrams where only the $i$ last fused strands go out of the top and bottom ellipses, and they can be obtained from a multiplication of the diagram $U_i$ with diagrams $S_j$. Hence, by the induction hypothesis, they belong to the image of $\phi$. Since we have supposed that $i<k$ and since $q^{2m}-1$ is invertible for $m=1,\dots,k$, the first term in \eqref{eq:TSU} is, up to an invertible factor,
\begin{equation}
	P_k\si_{k,i+1}\si_{k-1}\si_{k,i}\si_{k-2}\si_{k-1,i}\dots \si_{k-i}\si_{k-i+1,i} P_k = P_k\si_{k,i+1}\si_{k-1,i+1}\si_{k-2,i+1}\dots \si_{k-i,i+1} P_k = U_{i+1}.
\end{equation}
Therefore, the element $U_{i+1}$ belongs to the image of $\phi$. By induction, we conclude that all the elements $U_{i}$ for $i=1,2,\dots\min \{k,n\}$ belong to the image.   

\paragraph{Injectivity of $\phi$.}
At this point, we have shown that $\phi:\cA_n^{(k)} \to \fH$ is a surjective morphism. By comparing the cardinality \eqref{eq:cardavoid-k} of the spanning set \eqref{eq:spanAnk} for $\cA_n^{(k)}$ with the dimension \eqref{eq:dimfH} of $\fH$, we deduce that $\dim(\cA_n^{(k)}) \leq \dim(\fH)$. Hence both algebras have the same dimension and $\phi$ is an isomorphism.

\subsection{Towards a definition of $\cA_n^{(k)}$ over $\bC[q^{\pm1}]$}\label{subsec:conj}

The fused Hecke algebra $H_{k,n}$ is not directly defined over $\bC[q^{\pm1}]$. Nevertheless, the presentation by generators and relations of Definition \ref{def-Ank} could be taken as it is over $\bC[q^{\pm1}]$. However, note that the resulting algebra would then not be free over $\bC[q^{\pm1}]$ with the correct dimension, that is, Corollary \ref{coro-basis} would not be true over $\bC[q^{\pm1}]$ since we used that $[k]_q!$ is invertible in $\bC^{(k)}[q^{\pm1}]$ to prove Proposition \ref{prop-spanAnk}.

So we believe that Definition \ref{def-Ank} is not the correct one to take over $\bC[q^{\pm1}]$. The key to this problem is the following conjectural result.
\begin{conj}\label{conj:Ek+1}
In $\cA_{k+1}$, when $(\alpha_1,\alpha_2)$ are specialised to $(q^{-2},q^{2k})$, the element $E_{k+1}^{(q,\alpha_1)}$ factorises as:
\[E_{k+1}^{(q,\alpha_1)}=[k+1]_q! \tilde{E}_{k+1}^{(q,\alpha_1)}\,,\]
where $\tilde{E}_{k+1}^{(q,\alpha_1)}$ is in $\cA_{k+1}$ with coefficients in $\bC[q^{\pm1}]$.
\end{conj}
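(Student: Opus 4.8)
The base case $k=1$ is Corollary~\ref{coro:Ek+1}, so fix $k\geq 2$. Since $\bC[q^{\pm1}]$ is a principal ideal domain in which $[k+1]_q!$ is, up to a unit, the product of the irreducibles dividing $[2]_q,\dots,[k+1]_q$, and since the coefficient of the basis element \eqref{rewriting-k} (with $n=k+1$) in $E_{k+1}^{(q,\alpha_1)}$ specialised as in \eqref{eq:param2} equals $(-1)^{k+1}q^{-k(k+1)}[k+1]_q!$ by the lemma in the proof of Proposition~\ref{prop-spanAnk}, the conjecture is equivalent to the statement that \emph{every} coefficient of this specialised $E_{k+1}^{(q,\alpha_1)}$, expanded in the $\bar{1}\bar{2}$-avoiding basis of $\cA_{k+1}$ from Theorem~\ref{theo:basisA}, is divisible by $[k+1]_q!$; the element $\tilde E_{k+1}^{(q,\alpha_1)}$ is then forced to be $([k+1]_q!)^{-1}E_{k+1}^{(q,\alpha_1)}$, and it automatically inherits the central quasi-idempotent relations of Proposition~\ref{propE}.

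The first step is a reduction. Over $R$ one has $E_{k+1}^{(q,\alpha_1)}-E_{k+1}^{(q,\alpha_2)}=\sum_{\omega\in B_{k+1}}q^{\ell(\omega)-\ell_0(\omega)}(-1)^{\ell_0(\omega)}\bigl(\alpha_2^{-\ell_0(\omega)}-\alpha_1^{-\ell_0(\omega)}\bigr)g_\omega$ in $\cH{k+1}$, so the image of this difference in $\cA_{k+1}$ equals $(\alpha_1-\alpha_2)D$ for a \emph{well-defined} $D\in\cA_{k+1}$ over $R$ (as $\cA_{k+1}$ is free over $R$). By Proposition~\ref{prop-idempAn} one has $E_{k+1}^{(q,\alpha_2)}=q^{k(k+1)/2}[k+1]_q!\,\tilde E_{k+1}^{(q,\alpha_2)}$ over $R$, and under \eqref{eq:param2} a short computation gives $\alpha_1-\alpha_2=-q^{k-1}(q-q^{-1})[k+1]_q$. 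Dividing the resulting identity by $[k+1]_q$ and using $[k+1]_q!=[k+1]_q[k]_q!$ reduces the conjecture to the single assertion that $D$, specialised as in \eqref{eq:param2}, is divisible by $[k]_q!$ in $\cA_{k+1}$.

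For the second step I would pass to a small corner module. Since $E_{k+1}^{(q,\alpha_b)}g_i=qE_{k+1}^{(q,\alpha_b)}=g_iE_{k+1}^{(q,\alpha_b)}$ for $i=1,\dots,k$ and $b\in\{1,2\}$ (Proposition~\ref{propE}), the same holds for $D$; hence, writing $\Lambda=\Lambda_{k+1}^q(g_1,\dots,g_k)$ for the $q$-symmetriser and $c=q^{k(k+1)/2}[k+1]_q!$ (so $\Lambda^2=c\Lambda$), we get $c^2D=\Lambda D\Lambda\in\Lambda\cA_{k+1}\Lambda$. Using the parabolic double-coset decomposition of $B_{k+1}$ with respect to $\fS_{k+1}$ — whose double cosets are indexed by the number $a\in\{0,\dots,k+1\}$ of barred entries, with minimal-length representatives $w_a$ for which $g_{w_a}=g_0\cdot g_1g_0\cdots g_{a-1}\cdots g_1g_0$ — together with Theorem~\ref{theo:basisA} and the dimension count over $F$, one checks that $\Lambda\cA_{k+1}\Lambda$ is free over $\bC[q^{\pm1}]$ of rank $k+2$ with basis $\{b_a:=\Lambda g_{w_a}\Lambda\}_{a=0}^{k+1}$. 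Expanding $c^2D=\sum_{a=0}^{k+1}h_a b_a$ with $h_a\in\bC[q^{\pm1}]$, and writing $\gamma_a$ for the content of $b_a$ in $\cA_{k+1}$, the divisibility of $D$ by $[k]_q!$ follows as soon as one shows that $c^2[k]_q!/\gamma_a$ divides $h_a$ for every $a$. In principle the $h_a$ are accessible by substituting the recursion \eqref{eq:Erec} for $E_{k+1}^{(q,\alpha_1)}$ and $E_{k+1}^{(q,\alpha_2)}$ into $D$ and reducing with the defining relation \eqref{eq:relquotient} of $\cA_{k+1}$, exactly in the style of the proofs of Proposition~\ref{prop-idempAn} and of the lemma in the proof of Proposition~\ref{prop-spanAnk}, but now tracking all coefficients simultaneously; the extreme cases $a=0$ (where $b_0=c\Lambda$ has content $\gamma_0=c$) and $a=k+1$ (where $g_{w_{k+1}}$ is \eqref{rewriting-k}) reduce to the computations recalled above.

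The main obstacle lies in the intermediate range $1\leq a\leq k$: controlling the contents $\gamma_a$ requires knowing the eigenvalues of $g_0$ and of the products $g_0g_1\cdots g_j$ on the one-dimensional $q$-symmetrised subspaces $p\,V_{((a),(k+1-a))}$ (with $p$ the normalised symmetriser), and ruling out that $D$ acquires only a proper divisor of $[k]_q!$ amounts to understanding the degeneration of the representation theory of $\cA_{k+1}$ at the primitive $m$-th roots of unity with $m\leq 2k$, i.e.\ at the roots of $[k]_q!$. A perhaps more robust alternative would be to localise $\bC[q^{\pm1}]$ at each such root of unity $\zeta$ and to prove directly that every coordinate of the specialised $E_{k+1}^{(q,\alpha_1)}$ vanishes at $q=\zeta$ to at least the order with which $(q-\zeta)$ divides $[k+1]_q!$, by relating this vanishing to the cellular structure of $\cA_{k+1}$ near $\zeta$ and to the flatness of $\cA_{k+1}^{(k)}\cong\fH$ (Theorem~\ref{thm-iso1}) over an appropriate localisation. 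Obtaining such sharp vanishing orders uniformly in all coordinates is the crux of the matter, and is precisely why we have so far only been able to state this as a conjecture.
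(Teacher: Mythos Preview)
The statement is a \emph{Conjecture} in the paper and is not proved there; the paper offers only (i) explicit checks for small $k$, (ii) the weaker Corollary~\ref{coro:Ek+1}, which extracts the single factor $[k+1]_q$ rather than the full factorial, and (iii) a proof of the analogous factorisation in the further quotient $\cC_{n,2}$ (Proposition~\ref{prop-idempCn2}). Your write-up is likewise not a proof, and you say so yourself in the final sentence. There is therefore no proof in the paper to compare your attempt against: both the paper and your proposal leave the statement open.

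That said, your first two paragraphs are correct and are a genuine sharpening of what the paper records. The reduction to showing that $[k]_q!$ divides the element $D=(\alpha_1-\alpha_2)^{-1}\bigl(E_{k+1}^{(q,\alpha_1)}-E_{k+1}^{(q,\alpha_2)}\bigr)$ after specialisation is exactly the mechanism behind Corollary~\ref{coro:Ek+1}, pushed one step further using Proposition~\ref{prop-idempAn}; the paper does not spell this out. The corner-algebra strategy in your third paragraph, however, has a circularity beyond the obstacle you flag: to recover divisibility of $D$ from information about $c^2D=\Lambda D\Lambda$ you would have to invert, or at least control, $c=q^{k(k+1)/2}[k+1]_q!$, which is built from exactly the factors whose divisibility is in question --- so passing to $\Lambda\cA_{k+1}\Lambda$ does not by itself reduce the difficulty. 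Your alternative of localising at each primitive root of unity and bounding vanishing orders is the more honest formulation of what remains, and is consistent with the paper's observation that the statement is only established in $\cC_{n,2}$, where the extra Temperley--Lieb relations make the recursion of Proposition~\ref{prop-idempCn2} close up.
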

To support this conjecture, we have checked it explicitly for small values of $k$. Note moreover that it generalises Corollary \ref{coro:Ek+1} which already identified the factor $[k+1]_q$ (but not the full $q$-factorial). Finally, we are able to prove this statement in the quotient $\cC_{n,2}$ of $\cA_n$ (see Section \ref{subsecboundaryseam}).

Just for this subsection, we are going to assume that this conjecture is true, thereby allowing to define, when $\alpha_1,\alpha_2$ are specialised as before, an element $\tilde{E}_{k+1}^{(q,\alpha_1)}$ in $\cA_n$ with coefficients in $\bC[q^{\pm1}]$. Now the correct definition over $\bC[q^{\pm1}]$ of the algebra $\cA_n^{(k)}$ that we promote is the following.
\begin{defi}\label{def-Ankconj}
The algebra $\cA_n^{(k)}$ is the specialisation over $\mathbb{C}[q^{\pm1}]$ of $\cA_n$ corresponding to $(\alpha_1,\alpha_2)=(q^{-2},q^{2k})$ with the additional defining relation if $n>k$:
\begin{equation}
	\tilde{E}_{k+1}^{(q,\alpha_1)}=0\ . \label{eq:relidem2cpnj}
\end{equation}  
\end{defi}
Now, we can prove the analogue of Corollary \ref{coro-basis}.
\begin{prop}
The algebra $\cA_n^{(k)}$ is free over $\bC[q^{\pm1}]$ with basis:
\begin{equation*}
\{g_\omega\ |\ \omega\in B_n(\bar{1}\bar{2},\overline{k+1}\bar{k}\dots\bar{1})\}\ ,
\end{equation*}
\end{prop}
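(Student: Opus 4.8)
The plan is to bootstrap from three already–established inputs: the freeness of $\cA_n$ over $R$ with the pattern‑avoiding basis (Theorem \ref{theo:basisA}), the isomorphism $\cA_n^{(k)}\cong\fH$ of Definition \ref{def-Ank} (Theorem \ref{thm-iso1} and Corollary \ref{coro-basis}), and the assumed Conjecture \ref{conj:Ek+1}. If $n\leq k$ there is nothing new: relation \eqref{eq:relidem2cpnj} is not imposed, $B_n(\bar1\bar2,\overline{k+1}\bar k\dots\bar1)=B_n(\bar1\bar2)$ as noted in the excerpt, and the statement is exactly Theorem \ref{theo:basisA} base–changed along the ring map $R\to\bC[q^{\pm1}]$ sending $(\alpha_1,\alpha_2)\mapsto(q^{-2},q^{2k})$, under which a free module stays free with the image of the basis. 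So from now on assume $n>k$.

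First I would redo the spanning argument of Proposition \ref{prop-spanAnk} with $\tilde{E}_{k+1}^{(q,\alpha_1)}$ replacing ${E'}_{k+1}^{(q,\alpha_1)}$. The only new point is to check that the coefficient of the longest basis element \eqref{rewriting-k} (taken with $n=k+1$) in the expansion of $\tilde{E}_{k+1}^{(q,\alpha_1)}$ is a unit in $\bC[q^{\pm1}]$. By the Lemma inside the proof of Proposition \ref{prop-spanAnk}, that element occurs in $E_{k+1}^{(q,\alpha_1)}$ with coefficient $(-1)^{k+1}\alpha_2^{-(k+1)}q^{k(k+1)}[k+1]_q!$; dividing by $[k+1]_q!$ as in Conjecture \ref{conj:Ek+1} and specialising $\alpha_2=q^{2k}$ leaves $(-1)^{k+1}q^{-k(k+1)}$, which is invertible in $\bC[q^{\pm1}]$. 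With this, the remainder of the proof of Proposition \ref{prop-spanAnk} applies verbatim: by the commutation relations any $g_\omega$ with $\ell_0(\omega)\geq k+1$ contains a subword equal to \eqref{rewriting-k} for $n=k+1$, so \eqref{eq:relidem2cpnj} rewrites it as a combination of terms of strictly smaller $\ell_0$, and the defining relation \eqref{eq:relquotient} of $\cA_n$ rewrites any $g_\omega$ with $\omega\notin B_n(\bar1\bar2)$ without increasing $\ell_0$. Hence $\{g_\omega\mid\omega\in B_n(\bar1\bar2,\overline{k+1}\bar k\dots\bar1)\}$ spans $\cA_n^{(k)}$ over $\bC[q^{\pm1}]$, a set of cardinality $N:=\sum_{i=0}^{\min(k,n)}\binom{n}{i}^2(n-i)!$ by \eqref{eq:cardavoid-k}.

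Next I would pin down the rank by base change to $\bC(q)$. Over $\bC^{(k)}[q^{\pm1}]$ one has $[k+1]_q!\,\tilde{E}_{k+1}^{(q,\alpha_1)}=E_{k+1}^{(q,\alpha_1)}=[k+1]_q\,{E'}_{k+1}^{(q,\alpha_1)}$, so $\tilde{E}_{k+1}^{(q,\alpha_1)}$ and ${E'}_{k+1}^{(q,\alpha_1)}$ differ by the unit $[k]_q!$ and generate the same two‑sided ideal; therefore $\cA_n^{(k)}\otimes_{\bC[q^{\pm1}]}\bC^{(k)}[q^{\pm1}]$ is precisely the algebra of Definition \ref{def-Ank}, hence isomorphic to $\fH$ by Theorem \ref{thm-iso1}. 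Tensoring once more with $\bC(q)$ and invoking Corollary \ref{coro-basis} gives $\dim_{\bC(q)}\bigl(\cA_n^{(k)}\otimes_{\bC[q^{\pm1}]}\bC(q)\bigr)=N$.

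Finally I would combine the two. Let $\phi\colon\bC[q^{\pm1}]^{N}\twoheadrightarrow\cA_n^{(k)}$ send the standard basis onto the spanning set above. Tensoring with $\bC(q)$ yields a surjection $\bC(q)^{N}\twoheadrightarrow\cA_n^{(k)}\otimes\bC(q)$ between $\bC(q)$–vector spaces of equal dimension $N$, hence an isomorphism; by flatness of $\bC[q^{\pm1}]\to\bC(q)$ this forces $\ker\phi\otimes_{\bC[q^{\pm1}]}\bC(q)=0$. As a submodule of a free module over the domain $\bC[q^{\pm1}]$, the module $\ker\phi$ is torsion‑free, hence embeds into $\ker\phi\otimes_{\bC[q^{\pm1}]}\bC(q)=0$, so $\ker\phi=0$ and $\phi$ is an isomorphism. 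This proves freeness of $\cA_n^{(k)}$ over $\bC[q^{\pm1}]$ with basis $\{g_\omega\mid\omega\in B_n(\bar1\bar2,\overline{k+1}\bar k\dots\bar1)\}$. Beyond Conjecture \ref{conj:Ek+1}, which is assumed, the only delicate points are the recomputation of the leading coefficient and the identification over $\bC^{(k)}[q^{\pm1}]$ of the ideals generated by $\tilde{E}_{k+1}^{(q,\alpha_1)}$ and ${E'}_{k+1}^{(q,\alpha_1)}$; I expect these to be where care is needed, the rest being bookkeeping.
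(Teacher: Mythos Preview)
Your proof is correct and follows essentially the same approach as the paper: show the set spans over $\bC[q^{\pm1}]$ by rerunning the argument of Proposition~\ref{prop-spanAnk} with the leading coefficient now a unit after dividing by $[k+1]_q!$, and then deduce linear independence from the isomorphism with $\fH$ over the field of fractions $\bC(q)$. The paper's own proof compresses this into two sentences, whereas you spell out the coefficient computation, the identification of the two ideals over $\bC^{(k)}[q^{\pm1}]$, and the torsion-free kernel argument explicitly.
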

\begin{proof}
First the above set is now a spanning set over $\bC[q^{\pm1}]$. Indeed, following the proof of Proposition \ref{prop-spanAnk}, we see that we have now removed all factors in front of the element in $\cA_{k+1}$ that we need to rewrite using $\tilde{E}_{k+1}^{(q,\alpha_1)}=0$. So the same proof works now over $\bC[q^{\pm1}]$. The freeness follows immediately from the already proved isomorphism with $H_{k,n}$ over the field of fractions $\bC(q)$.
\end{proof}

\section{Centralisers of $U_q(\mathfrak{gl}_N)$ and the boundary seam algebra ($N=2$)}\label{sec-cent}

Let $N>1$ and let $k>0$. In this final section, we combine the preceding sections to complete the following square by defining the algebras $\cC^{(k)}_{n,N}$
\[\begin{array}{ccc}
\cA_n & \rightsquigarrow & \cA^{(k)}_n \\[0.5em]
 \big\downarrow &&  \big\downarrow\\[0.5em]
 \cC_{n,N} & \rightsquigarrow & \cC^{(k)}_{n,N}
 \end{array}\]
and we show their connections with the centralisers of $U_q(gl_N)$ as discussed in the introduction. We then study in details the case $N=2$ to show that we have finally recovered the so-called boundary seam algebra from \cite{MRR}. 

As in the preceding section, we are going to work, unless otherwise specified over the ring $\bC^{(k)}[q^{\pm1}]$.

\subsection{Definition of $\cC^{(k)}_{n,N}$}\label{subsecCnNk}

The following definition has two equivalent forms, due to the two paths in the square above to reach $\cC^{(k)}_{n,N}$. Recall that the specialisation and the relation \eqref{eq:relidem2-2} were by definition how to go from $\cA_n$ to $\cA_n^{(k)}$, while the relations \eqref{eq:relquotient2N-2}-\eqref{eq:relquotient3N-2} were by definition how to go from $\cA_n$ to $\cC_{n,N}$.
\begin{defi}\label{def-CnNk}
Over the ring $\bC^{(k)}[q^{\pm1}]$, \hfill \break
$\bullet$ the algebra $\cC_{n,N}^{(k)}$ is the specialisation of $\cC_{n,N}$ for $\alpha_1=q^{-2}$ and $\alpha_2=q^{2k}$\,, with the additional defining relation if $n>k$:
\begin{equation}
	{E'}_{k+1}^{(q,\alpha_1)}=0\ ; \label{eq:relidem2-2}
\end{equation}  
$\bullet$ equivalently, the algebra $\cC_{n,N}^{(k)}$ is the quotient of $\cA_n^{(k)}$ by the relations:
\begin{align}
&\tilde{E}_N^{(-q^{-1},\alpha_1)}=0\,, \label{eq:relquotient2N-2}\\
&\Lambda^{-q^{-1}}_{N+1}(g_1,\dots,g_N)=0\,. \label{eq:relquotient3N-2} 
\end{align}
\end{defi}

\begin{rem}
Exactly as discussed in Section \ref{subsec:conj} for the algebra $\cA_n^{(k)}$, we emphasize that a good definition of $\cC^{(k)}_{n,N}$ over $\bC[q^{\pm1}]$ would require to prove that the idempotent $E_{k+1}^{(q,\alpha_1)}$ factorises as:
\[E_{k+1}^{(q,\alpha_1)}=[k+1]_q! \tilde{E}_{k+1}^{(q,\alpha_1)}\ .\]
Then we would define $\cC_{n,N}^{(k)}$ by replacing \eqref{eq:relidem2-2} by $\tilde{E}_{k+1}^{(q,\alpha_1)}=0$.
\end{rem}

\subsection{Isomorphism with the centralisers}

We start by relating $\cC_{n,N}^{(k)}$ to the fused Hecke algebra $\fH$.

\begin{prop}\label{prop:CnN}
The algebra $\cC_{n,N}^{(k)}$ is isomorphic to the quotient of $\fH$ by the relations
\begin{align}
P_k\Lambda_{N+1}(\sigma_k,\dots,\sigma_{k+N-1})P_k=0 \, , \label{eq:relASfH1} \\
P_k\Lambda_{N+1}(\sigma_{k+1},\dots,\sigma_{k+N})P_k=0 \, . \label{eq:relASfH2}
\end{align}
\end{prop}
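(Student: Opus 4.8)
I would deduce Proposition~\ref{prop:CnN} directly from the isomorphism $\phi\colon\cA_n^{(k)}\xrightarrow{\ \sim\ }\fH$ of Theorem~\ref{thm-iso1}. By the second description in Definition~\ref{def-CnNk}, $\cC_{n,N}^{(k)}$ is the quotient of $\cA_n^{(k)}$ by the two-sided ideal $I$ generated by $\tilde{E}_N^{(-q^{-1},\alpha_1)}$ and by $\Lambda^{-q^{-1}}_{N+1}(g_1,\dots,g_N)$; since $\phi$ is an isomorphism, $\cC_{n,N}^{(k)}\cong\fH/\phi(I)$, where $\phi(I)$ is the two-sided ideal of $\fH$ generated by $\phi(\tilde{E}_N^{(-q^{-1},\alpha_1)})$ and $\phi(\Lambda^{-q^{-1}}_{N+1}(g_1,\dots,g_N))$. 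So the whole proposition reduces to showing that these two images generate the same ideal as the left-hand sides of \eqref{eq:relASfH1}--\eqref{eq:relASfH2}. For $n<N$ (and, for the second relation, $n=N$) there is nothing to impose on either side and both algebras equal $\fH\cong\cA_n^{(k)}$, so I assume $n\geq N+1$ from now on.

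\textbf{The relation \eqref{eq:relASfH2}.} For $i\geq 1$ one has $\phi(g_i)=S_i=\sigma_{k+i}P_k=P_k\sigma_{k+i}$ by \eqref{eq:Sialg}. If $\omega=s_{i_1}\cdots s_{i_\ell}$ is a reduced expression in the Coxeter generators of $\fS_{N+1}$, then $\phi(g_\omega)=S_{i_1}\cdots S_{i_\ell}=P_k\sigma_{k+i_1}\cdots\sigma_{k+i_\ell}P_k$ (sliding the intermediate copies of $P_k$ together and using $P_k^2=P_k$); moreover the map $s_j\mapsto s_{k+j}$ identifies $\fS_{N+1}$ with a standard parabolic subgroup of $\fS_{k+n}$, hence is length-preserving, so $\sigma_{k+i_1}\cdots\sigma_{k+i_\ell}=\sigma_{\omega'}$ for $\omega'$ the image of $\omega$. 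Summing over $\fS_{N+1}$ gives $\phi\bigl(\Lambda^{-q^{-1}}_{N+1}(g_1,\dots,g_N)\bigr)=P_k\Lambda_{N+1}(\sigma_{k+1},\dots,\sigma_{k+N})P_k$, which is exactly the left side of \eqref{eq:relASfH2} (here $\Lambda_m$ denotes the $q$-antisymmetriser $\Lambda^{-q^{-1}}_m$, as in Section~\ref{subsec-idem}). The same computation applied to $\Lambda_N$ gives $\phi\bigl(\Lambda_N(g_1,\dots,g_{N-1})\bigr)=P_k\Lambda_N(\sigma_{k+1},\dots,\sigma_{k+N-1})P_k$, which I will need below.

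\textbf{The relation \eqref{eq:relASfH1}: the main point.} The key identity I would establish is
\[
P_k\,\Lambda_{N+1}(\sigma_k,\sigma_{k+1},\dots,\sigma_{k+N-1})\,P_k\;=\;\frac{1}{q^{2k}-1}\,\phi\bigl(\tilde{E}_N^{(-q^{-1},\alpha_1)}\bigr)\qquad\text{in }\fH .
\]
The argument runs as follows. Start from the standard recursion for the $q$-antisymmetriser in the form $\Lambda_{N+1}(\sigma_k,\dots,\sigma_{k+N-1})=\Lambda_N(\sigma_{k+1},\dots,\sigma_{k+N-1})\bigl(1+x\sigma_k+x^2\sigma_k\sigma_{k+1}+\dots+x^N\sigma_k\sigma_{k+1}\cdots\sigma_{k+N-1}\bigr)$ with $x=-q^{-1}$ (this is the recursion after \eqref{eq:symHecke} read through the diagram automorphism $g_j\leftrightarrow g_{N+1-j}$, under which $\Lambda_{N+1}$ is invariant). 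Conjugating by $P_k$ and pushing the free copies of $P_k$ past the generators $\sigma_{k+1},\sigma_{k+2},\dots$ (which commute with $P_k$), each term of the right-hand factor produces a single copy of $T=P_k\sigma_kP_k$; using \eqref{eq:relST} to write $T=\delta^{-1}(S_0-P_k)$ with $\delta=(q-q^{-1})q^{k-1}[k]_q=q^{-1}(q^{2k}-1)$ invertible in $\bC^{(k)}[q^{\pm1}]$, the left-hand side becomes $\phi(Y)$ with
\[
Y=\Lambda_N(g_1,\dots,g_{N-1})\Bigl(1+x\delta^{-1}(g_0-1)\bigl(1+xg_1+x^2g_1g_2+\dots+x^{N-1}g_1\cdots g_{N-1}\bigr)\Bigr).
\]
Now use the absorption property $\Lambda_N(g_1,\dots,g_{N-1})g_i=x\Lambda_N(g_1,\dots,g_{N-1})$ for $1\leq i\leq N-1$: the $g_0$-free part of the inner sum collapses against $\Lambda_N$ to the scalar $\sum_{i=0}^{N-1}x^{2i}=q^{-(N-1)}[N]_q$, while the $g_0$-part reproduces $\Lambda_N(g_1,\dots,g_{N-1})\sum_{i=0}^{N-1}x^ig_0g_1\cdots g_i$, which by the explicit formula \eqref{eq:idemENtilde1} for $\tilde{E}_N^{(-q^{-1},\alpha_1)}$, in the specialisation \eqref{eq:param2}, equals $\bigl(q^{2k}+q^{-N}[N-1]_q\bigr)\Lambda_N(g_1,\dots,g_{N-1})-\tilde{E}_N^{(-q^{-1},\alpha_1)}$. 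Collecting terms, $Y=\bigl(1+x\delta^{-1}(q^{2k}+q^{-N}[N-1]_q-q^{-(N-1)}[N]_q)\bigr)\Lambda_N(g_1,\dots,g_{N-1})-x\delta^{-1}\tilde{E}_N^{(-q^{-1},\alpha_1)}$; the elementary identities $q^{-(N-1)}[N]_q-q^{-N}[N-1]_q=1$ and $x\delta^{-1}(q^{2k}-1)=xq=-1$ kill the bracket and leave $Y=-x\delta^{-1}\tilde{E}_N^{(-q^{-1},\alpha_1)}=(q^{2k}-1)^{-1}\tilde{E}_N^{(-q^{-1},\alpha_1)}$, which is the claim. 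I expect the only genuinely delicate point to be the bookkeeping in this step: one must peel $\sigma_k$ off on the correct side so that the $g_i$'s land on the correct side of $\Lambda_N(g_1,\dots,g_{N-1})$ and the collapsed sum matches the form of $\tilde{E}_N^{(-q^{-1},\alpha_1)}$ in \eqref{eq:idemENtilde1} verbatim; the two scalar identities are immediate from $[m]_q=q^{m-1}+q^{m-3}+\dots+q^{1-m}$.

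\textbf{Conclusion.} Since $(q^{2k}-1)^{-1}$ is a unit of $\bC^{(k)}[q^{\pm1}]$, the key identity shows that the principal two-sided ideal generated by $\phi(\tilde{E}_N^{(-q^{-1},\alpha_1)})$ equals the one generated by $P_k\Lambda_{N+1}(\sigma_k,\dots,\sigma_{k+N-1})P_k$, i.e.\ by the left side of \eqref{eq:relASfH1}. Together with the identification of the second generator from the previous step, this gives $\phi(I)=\langle\,\text{LHS of \eqref{eq:relASfH1}},\ \text{LHS of \eqref{eq:relASfH2}}\,\rangle$, so $\phi$ induces the desired isomorphism $\cC_{n,N}^{(k)}=\cA_n^{(k)}/I\ \xrightarrow{\ \sim\ }\ \fH/\langle\eqref{eq:relASfH1},\eqref{eq:relASfH2}\rangle$.
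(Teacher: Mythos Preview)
Your proof is correct and follows essentially the same route as the paper: both reduce to computing $\phi(\tilde{E}_N^{(-q^{-1},\alpha_1)})$ via the same recursion for $\Lambda_{N+1}$ peeled from the $\sigma_k$ side, and both arrive at $\phi(\tilde{E}_N^{(-q^{-1},\alpha_1)})=(q^{2k}-1)\,P_k\Lambda_{N+1}(\sigma_k,\dots,\sigma_{k+N-1})P_k$. The only cosmetic difference is that the paper introduces the auxiliary element $t$ with $g_0=(q^{-1}\alpha_2-q\alpha_1)t+q^2\alpha_1$ and $\phi(t)=T$, whereas you substitute $T=\delta^{-1}(S_0-P_k)$ directly; these are the same substitution since $\delta=q^{-1}\alpha_2-q\alpha_1$ in the specialisation. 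One small slip: your case split skips $n=N$, where relation~\eqref{eq:relASfH1} (equivalently \eqref{eq:relquotient2N-2}) is still present while \eqref{eq:relASfH2} is not---but your computation for the first relation works verbatim for $n\geq N$, so this is harmless.
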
   
\proof
From the isomorphism of $\cA_n^{(k)}$ with $\fH$, it remains only to prove that the quasi-idempotent $\tilde{E}_N^{(-q^{-1},\alpha_1)}$ and the antisymmetriser $\Lambda^{-q^{-1}}_{N+1}(g_1,\dots,g_N)$ of $\cA_n^{(k)}$ are mapped to the correct elements in $\fH$. 

First, it is directly seen that 
\begin{equation}
\phi(\Lambda_{N+1}(g_1,\dots,g_N)) = \Lambda_{N+1}(S_1,\dots,S_N) = P_k\Lambda_{N+1}(\sigma_{k+1},\dots,\sigma_{k+N})P_k \, .
\end{equation}
Then, using an explicit basis for $\fS_{N+1}$, it is seen that
\begin{align}
\Lambda_{N+1}(\sigma_k,\dots,\sigma_{k+N-1})=
\Lambda_{N}(\sigma_{k+1},\dots,\sigma_{k+N-1})(1-q^{-1}\si_{k}+\cdots +(-q^{-1})^N\si_{k}\si_{k+1}\dots \si_{k+N-1} ).
\end{align}
Therefore, by definition of the elements $S_i$ and $T$ and by the properties of $P_k$ we can write
\begin{equation} \label{eq:PkASPk}
	P_k\Lambda_{N+1}(\sigma_k,\dots,\sigma_{k+N-1})P_k =  \Lambda_{N}(S_1,\dots,S_{N-1})(P_k-q^{-1}T+\cdots +(-q^{-1})^NTS_{1}\dots S_{N-1} ).
\end{equation}
Define the element $t$ of $\cA_n^{(k)}$ by
\begin{equation} \label{eq:tg0}
	g_0=(q^{-1}\alpha_2-q\alpha_1)t+q^2\alpha_1\ ,
\end{equation} 
where we recall that $\alpha_1=q^{-2}$ and $\alpha_2=q^{2k}$. Since $q^{2k}-1$ is invertible, this indeed defines the element $t$. Then we can rewrite the formula \eqref{eq:idemENtilde1} obtained for $\tilde{E}_N^{(-q^{-1},\alpha_1)}$ as
\begin{equation} \label{eq:ENtilde2}
	\tilde{E}_N^{(-q^{-1},\alpha_1)}=\Lambda_N(g_1,\dots,g_{N-1})\cdot (\alpha_2-q^2\alpha_1)\left(1-q^{-1}\sum_{i=0}^{N-1}(-q^{-1})^{i}tg_1\dots g_i\right)\, .
\end{equation}
Note that, because of \eqref{eq:relST} (taking into account the specialisation of $\al_1$ and $\al_2$), the image of $t$ under the map $\phi$ is $T$. Therefore, by comparing \eqref{eq:PkASPk} with \eqref{eq:ENtilde2}, it is seen that
\begin{equation}
	\phi(\tilde{E}_N^{(-q^{-1},\al_1)})=(q^{2k}-1)  P_k\Lambda_{N+1}(\sigma_k,\dots,\sigma_{k+N-1})P_k\ .
\end{equation}
We conclude using again that $q^{2k}-1$ is invertible.   
\endproof

\paragraph{Example.} Take $N=2$ and let us write in diagrams the first relation \eqref{eq:relASfH1}. This looks like:
\begin{align}
&\begin{tikzpicture}[scale=0.3,baseline={([yshift=\eseq]current bounding box.center)}]
		\ellk{0}{0}
		\ellstrand{0}{0}
		\slab{(4,3)}{$1$}
		\strand{4}{2}
		\slab{(8,3)}{$2$}
		\strand{8}{2}
	\end{tikzpicture}
	-q^{-1}
	\begin{tikzpicture}[scale=0.3,baseline={([yshift=\eseq]current bounding box.center)}]
		\ellk{0}{0}
		\ellocross{0}{0}
		\slab{(4,3)}{$1$}
		\slab{(8,3)}{$2$}
		\strand{8}{2}
	\end{tikzpicture}
	-q^{-1}
	\begin{tikzpicture}[scale=0.3,baseline={([yshift=\eseq]current bounding box.center)}]
		\ellk{0}{0}
		\ellstrand{0}{0}
		\slab{(4,3)}{$1$}
		\slab{(8,3)}{$2$}
		\ocross{4}{2}
	\end{tikzpicture} \nonumber \\
	+q^{-2}
	&\begin{tikzpicture}[scale=0.3,baseline={([yshift=\eseq]current bounding box.center)}] 
	\draw[lightgray,fill=lightgray] (-1,-2) rectangle (1,2);
	\fill (0,2) ellipse (1.4cm and 0.2cm);
	\fill (0,-2) ellipse (1.4cm and 0.2cm);
	\draw[thick] (-1,2) -- (-1,-2);
	\slab{(0,0)}{$k$}	
	\draw[thick] (8,2)..controls +(0,-3) and +(0,+3) .. (1,-2);
	\fill[white] (2.9,-0.3) circle (0.4);
	\draw[thick] (1,2)..controls +(0,-2) and +(0,+2) .. (4,-2);
	\fill (4,2) circle (0.2);
	\fill (4,-2) circle (0.2);
	\slab{(4,3)}{$1$}
	\slab{(8,3)}{$2$}	
	\fill (4,2) circle (0.2);
	\fill (4,-2) circle (0.2);
	\fill (8,2) circle (0.2);
	\fill (8,-2) circle (0.2);
	\fill[white] (5.6,0.2) circle (0.4);
	\draw[thick] (4,2)..controls +(0,-2) and +(0,+2) .. (8,-2);
	\end{tikzpicture}
	+q^{-2}
	\begin{tikzpicture}[scale=0.3,baseline={([yshift=\eseq]current bounding box.center)}] 
	\draw[lightgray,fill=lightgray] (-1,-2) rectangle (1,2);
	\fill (0,2) ellipse (1.4cm and 0.2cm);
	\fill (0,-2) ellipse (1.4cm and 0.2cm);
	\draw[thick] (-1,2) -- (-1,-2);
	\slab{(0,0)}{$k$}	
	\draw[thick] (4,2)..controls +(0,-2) and +(0,+2) .. (1,-2);
	\draw[thick] (8,2)..controls +(0,-2) and +(0,+2) .. (4,-2);
	\fill[white] (2.9,0.25) circle (0.4);
	\fill[white] (5.7,-0.2) circle (0.4);
	\draw[thick] (1,2)..controls +(0,-3) and +(0,+3) .. (8,-2);
	\fill (4,2) circle (0.2);
	\fill (4,-2) circle (0.2);
	\slab{(4,3)}{$1$}
	\slab{(8,3)}{$2$}	
	\fill (4,2) circle (0.2);
	\fill (4,-2) circle (0.2);
	\fill (8,2) circle (0.2);
	\fill (8,-2) circle (0.2);
	\end{tikzpicture}
	-q^{-3}
	\begin{tikzpicture}[scale=0.3,baseline={([yshift=\eseq]current bounding box.center)}] 
	\draw[lightgray,fill=lightgray] (-1,-2) rectangle (1,2);
	\fill (0,2) ellipse (1.4cm and 0.2cm);
	\fill (0,-2) ellipse (1.4cm and 0.2cm);
	\draw[thick] (-1,2) -- (-1,-2);
	\slab{(0,0)}{$k$}
	\slab{(4,3)}{$1$}
	\strand{4}{2}
	\slab{(8,3)}{$2$}
	\draw[thick] (1,-2)..controls +(0,1) and +(-1,0) .. (2.5,-0.6) -- (3.7,-0.6) (4.3,-0.6) --  (5,-0.6) ..controls +(2,0) and +(0,-2) .. (8,2);
	\fill (8,2) circle (0.2);
	\fill (8,-2) circle (0.2);
	\fill[white] (4,0.6) circle (0.3);
	\fill[white] (7.3,0) circle (0.3);
	\draw[thick] (1,2)..controls +(0,-1) and +(-1,0) .. (2.5,0.6) -- (5,0.6) ..controls +(2,0) and +(0,2) .. (8,-2);	
	\fill (4,2) circle (0.2);
\fill (4,-2) circle (0.2);
	\end{tikzpicture} =0.
\end{align}
In words, we plug in the usual $q$-antisymmetriser (here on $3$ strands) using the last strand coming out of the ellipse as a first strand. In terms of the generators $U_1:=T$ and $S_1$ of $H_{k,2}$, the previous relation is:
\begin{equation}
	P_k-q^{-1}T-q^{-1}S_1+q^{-2}S_1T+q^{-2}TS_1 - q^{-3}S_1TS_1 =0. \label{eq:relantisym}
\end{equation}
A similar description works for any $N\geq 2$.

The second relation \eqref{eq:relASfH2} is just the usual $q$-antisymmetriser on $N+1$ strands, which is plugged in using the $N+1$ first dots (and not using at all the strands coming out of the ellipse).

\paragraph{Isomorphism with the centraliser.} In this paragraph only, we will work over the field of fractions $\bC(q)$. Using the notations of the introduction, consider the following centraliser:
\[\mathcal{Z}_{k,n,N}=\text{End}_{U_q(gl_N)}\bigl(L^N_{(k)}\otimes (L^N)^{\otimes n}\bigr)\ .\]
Combining what we have obtained so far with the results from \cite{CP} on these centralisers, we get the following description of $\mathcal{Z}_{k,n,N}$.
\begin{coro}
For all $k,n,N$ as before, we have that $\mathcal{Z}_{k,n,N}$ is isomorphic to $\cC_{n,N}^{(k)}$.
\end{coro}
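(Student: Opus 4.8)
The plan is to combine the isomorphism $\cA_n^{(k)}\cong\fH$ (Theorem \ref{thm-iso1}), the description of $\cC_{n,N}^{(k)}$ as a quotient of $\fH$ (Proposition \ref{prop:CnN}), and the known presentation of the centraliser $\mathcal{Z}_{k,n,N}$ from \cite{CP}. Recall that \cite{CP} establishes a surjective morphism $\fH\to\mathcal{Z}_{k,n,N}$ and, for the specific situation of one fused strand and for the values of $N$ considered here, identifies its kernel: the centraliser is the quotient of the fused Hecke algebra by the fused $q$-antisymmetrisers on $N+1$ consecutive strands. Concretely, the kernel is generated by the elements $P_k\Lambda_{N+1}(\sigma_j,\dots,\sigma_{j+N-1})P_k$ for all admissible $j$, which generalises the usual fact that the $U_q(gl_N)$-centraliser of a tensor product is obtained by killing the $q$-antisymmetriser on $N+1$ points.

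First I would recall precisely the statement from \cite{CP}: over $\bC(q)$, $\mathcal{Z}_{k,n,N}$ is isomorphic to $\fH$ modulo the two-sided ideal generated by $P_k\Lambda_{N+1}(\sigma_j,\dots,\sigma_{j+N-1})P_k$ for $j=1,\dots,k+n-N$. Then I would argue that this ideal is in fact generated by just the two elements appearing in Proposition \ref{prop:CnN}, namely $P_k\Lambda_{N+1}(\sigma_k,\dots,\sigma_{k+N-1})P_k$ and $P_k\Lambda_{N+1}(\sigma_{k+1},\dots,\sigma_{k+N})P_k$. For $j\geq k+1$ the element $P_k\Lambda_{N+1}(\sigma_j,\dots,\sigma_{j+N-1})P_k$ equals $\Lambda_{N+1}(S_{j-k},\dots,S_{j-k+N-1})$ (since these $\sigma$'s commute with $P_k$), and all of these are obtained from $P_k\Lambda_{N+1}(\sigma_{k+1},\dots,\sigma_{k+N})P_k=\Lambda_{N+1}(S_1,\dots,S_N)$ by conjugating with the $S_i$'s and using the braid relations, exactly as in the proof of Proposition \ref{propCn2} where the relation for $i=1$ implies the relation for all $i\geq 1$. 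For $j\leq k-1$, the $q$-antisymmetriser $\Lambda_{N+1}(\sigma_j,\dots,\sigma_{j+N-1})$ acts within the first $k$ strands, where $P_k$ is the $q$-symmetriser; since a one-row partition of length $\geq N+1$ cannot contain a column of length $N+1$ (Pieri rule), the product $P_k\Lambda_{N+1}(\dots)P_k$ already vanishes in $\fH$, so these $j$ contribute nothing to the ideal. Hence the ideal is generated by $j=k$ and $j=k+1$, which is exactly the ideal defining $\cC_{n,N}^{(k)}$ by Proposition \ref{prop:CnN}.

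Combining these two facts gives $\mathcal{Z}_{k,n,N}\cong\fH/(\text{ideal})\cong\cC_{n,N}^{(k)}$. I would then close by pointing out the compatibility of the morphisms: the composite $\cA_n^{(k)}\xrightarrow{\phi}\fH\to\mathcal{Z}_{k,n,N}$ sends $g_0,\dots,g_{n-1}$ to the images of $S_0,\dots,S_{n-1}$, and this is precisely the morphism from the affine/cyclotomic Hecke algebra to the centraliser that \cite{CP} (and the general principle recalled in the introduction, see \cite{OR}) describes, so the identification is canonical rather than merely abstract.

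The main obstacle is pinning down exactly what is proved in \cite{CP} about the kernel of $\fH\to\mathcal{Z}_{k,n,N}$. The introduction states that the kernel description of \cite{CP} is conjectural in general but proved in the cases studied here; so the honest version of this proof must cite the precise proposition in \cite{CP} covering one fused strand and the relevant $N$, and verify that its hypotheses are met. The second, more routine, point is checking carefully that the two $q$-antisymmetrisers in Proposition \ref{prop:CnN} really do generate the whole kernel ideal — the $j\leq k-1$ vanishing via the Pieri rule and the $j\geq k+1$ reduction via conjugation — but both of these are the same arguments already used in the proofs of Proposition \ref{prop:repCN} and Proposition \ref{propCn2}, so no new difficulty arises there.
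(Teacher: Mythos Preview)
Your proposal is correct and follows essentially the same route as the paper: use the surjection $\fH\to\mathcal{Z}_{k,n,N}$ from \cite{CP}, observe that the two $q$-antisymmetriser relations vanish in the centraliser, identify the kernel, and invoke Proposition~\ref{prop:CnN}. The only difference is that the paper cites directly from \cite[\S 9]{CP} that the single relation \eqref{eq:relASfH1} (your $j=k$) already generates the kernel over $\bC(q)$, so your reduction argument from all admissible $j$ down to $j=k,k+1$ is not needed; your honest caveat about ``pinning down exactly what is proved in \cite{CP}'' is precisely where the paper simply invokes this sharper statement.
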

\begin{proof}
From \cite[\S 5]{CP}, we have that the fused Hecke algebra $H_{k,n}$ surjects onto the centraliser $\mathcal{Z}_{k,n,N}$. Moreover, it is also clear from this construction that both relations \eqref{eq:relASfH1}-\eqref{eq:relASfH2} are satisfied in the image (since the expressions between the projectors are already $0$ in the usual Schur--Weyl duality with $U_q(gl_N)$). Moreover, it was proved in \cite[\S 9]{CP} that, for $q^2$ not a root of unity or over $\bC(q)$, the first relation \eqref{eq:relASfH1} is enough to generate the kernel, and this proves that the quotient of $H_{k,n}$ by \eqref{eq:relASfH1} and \eqref{eq:relASfH2} is isomorphic to $\mathcal{Z}_{k,n,N}$. With Proposition \ref{prop:CnN}, this concludes the proof.
\end{proof}

\begin{rem}
The proof shows that, over $\bC(q)$ or for $q^2$ not a root of unity, the second relation \eqref{eq:relASfH2} is implied by the first. This was already noticed at the level of $\cC_{n,N}$, where it was shown, using the semisimple representation theory in Section \ref{sec:C} that Relation \eqref{eq:relquotient2N-2} implies \eqref{eq:relquotient3N-2}.
\end{rem}

\begin{rem}
The representation theory of $\cC_{n,N}^{(k)}$ over $\bC(q)$ or when $q^2$ is not a root of unity is described as follows. Starting with the algebra $\cA_n^{(k)}$, which is the fused Hecke algebra, for which the irreducible representations were indexed by partitions $\lambda\vdash k+n$ with $\lambda_1\geq k$, we simply remove all those which have strictly more than $N$ lines. This is in agreement with the known decomposition of the tensor product of $U_q(gl_N)$-representations. We will give more details for $N=2$ in Section \ref{subsecboundaryseam} below. 
\end{rem}

\subsection{The boundary seam algebra ($N=2$)}\label{subsecboundaryseam}

For $N=2$, using the methods and the terminology of \cite{CP}, the centraliser $Z_{k,n,2}$ could be called the fused Temperley--Lieb algebra, since it can be described by multiplying the usual Temperley--Lieb algebra by a suitable projector on the left and on the right. In our case here, where only the first representation is fused, the fused Temperley--Lieb algebra was introduced in \cite{MRR}  and called the boundary seam algebra (see also \cite{LS,L-L}). We will show how it is recovered as the algebra $\cC_{n,2}^{(k)}$.

First, recall that the algebra $\cC_{n,2}$ was identified in Section \ref{sec:C} as the one-boundary Temperley--Lieb algebra, using the following change of generators:
\begin{equation}
	e_0:=\alpha_2-g_0, \quad e_i:=q-g_i, \quad i=1,2,\dots,n-1. \label{eq:ei2}
\end{equation}
The presentation of $\cC_{n,2}$ in terms of these generators was given explicitly in \eqref{eq:TLrel1}--\eqref{eq:TLrel5}. Here we complete the presentation of $\cC^{(k)}_{n,2}$ in terms of the same generators.

\begin{prop}\label{prop:boundaryseam}
The algebra $\cC^{(k)}_{n,2}$ is the specialisation of the one-boundary Temperley--Lieb algebra $\cC_{n,2}$ corresponding to $\alpha_1=q^{-2}$ and $\alpha_2=q^{2k}$, and the additional relation, if $n\geq k+1$,
\begin{equation}
	u_1u_2\dots u_{k+1}=0 \ , \label{eq:Bnk2}
\end{equation}
where, for $m=0,\dots,k$,
\begin{equation}
u_{m+1}:=\sum_{r=0}^{m-1}(-q)^{r}\left(1-q^{2(m-r)}\frac{\alpha_1}{\alpha_2}\right)e_{m}e_{m-1} \dots e_{m+1-r}+(-q)^{m}\alpha_2^{-1}e_{m}e_{m-1}\dots e_0 \ .
\end{equation}
\end{prop}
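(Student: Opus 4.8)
The strategy is to translate everything through the isomorphism $\phi:\cA_n^{(k)}\xrightarrow{\sim}\fH$ and the description of $\cC_{n,N}^{(k)}$ already obtained in Definition \ref{def-CnNk} and Proposition \ref{prop:CnN}. By Definition \ref{def-CnNk}, $\cC_{n,2}^{(k)}$ is the specialisation $\alpha_1=q^{-2}$, $\alpha_2=q^{2k}$ of $\cC_{n,2}$ together with the extra relation ${E'}_{k+1}^{(q,\alpha_1)}=0$ when $n>k$. So the only thing to prove is that, after the change of generators \eqref{eq:ei2}, the relation ${E'}_{k+1}^{(q,\alpha_1)}=0$ becomes exactly $u_1u_2\cdots u_{k+1}=0$ with the $u_{m+1}$ as stated. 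Equivalently, since $\cC_{k+1,2}$ is free over $R$ (Theorem \ref{theo:basisC2}) and the relevant element lives in $\cC_{k+1,2}$, it suffices to establish the identity
\[
{E'}_{k+1}^{(q,\alpha_1)}=\text{(invertible scalar)}\cdot u_1u_2\cdots u_{k+1}
\]
in $\cC_{k+1,2}$ (with the specialised parameters), as this already identifies the two-sided ideals they generate.

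\textbf{Key steps.} First I would set up the computation of ${E}_{k+1}^{(q,\alpha_1)}$ inside $\cC_{k+1,2}$ using the recursive formula \eqref{eq:Erec}, which for $x=q$ and $b=1$ reads
\[
E^{(q,\alpha_1)}_m = E^{(q,\alpha_1)}_{m-1}\Bigl(1+\sum_{i=1}^{m-1}q^{m-i}g_{m-1}\dots g_{i}-q^{m-1}\alpha_2^{-1}g_{m-1}\dots g_1g_0(1+\sum_{i=1}^{m-1}q^ig_1\dots g_i)\Bigr).
\]
The plan is to rewrite, step by step, the factor in parentheses, call it $F_m$, in terms of the generators $e_i$, and to show that modulo the left ideal generated by $E^{(q,\alpha_1)}_{m-1}$ (which absorbs $g_i$ as $q$, i.e.\ $e_i$ as $0$, for $i=1,\dots,m-2$ by \eqref{eq:proprE}) the factor $F_m$ coincides, up to an invertible scalar, with $u_m$ written in the $e_i$'s. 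This uses that $g_{m-1}\dots g_i=q^{m-1-i}(\text{lower order in }e)+\dots$ but more precisely that after multiplying by $E_{m-1}^{(q,\alpha_1)}$ on the left only the "straightened" tail $e_{m-1}e_{m-2}\cdots e_{i}$ survives with the appropriate power of $q$; the key local identities are the Temperley--Lieb relations \eqref{eq:TLrel1}--\eqref{eq:TLrel5} together with $g_0=\alpha_2-e_0$, $g_i=q-e_i$. The telescoping over $m=1,\dots,k+1$ then produces $u_1u_2\cdots u_{k+1}$ on the nose, after keeping track of the scalar factors: the product of all these scalars is (a power of $q$ times) $[k+1]_q$ times a nonvanishing polynomial in the specialised $\alpha$'s, which by Corollary \ref{coro:Ek+1} matches the single factor $[k+1]_q$ removed in passing to ${E'}_{k+1}^{(q,\alpha_1)}$, so that ${E'}_{k+1}^{(q,\alpha_1)}$ and $u_1\cdots u_{k+1}$ differ by a unit of $\bC^{(k)}[q^{\pm1}]$. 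Finally, to confirm the precise shape of $u_{m+1}$ — with coefficients $(-q)^r(1-q^{2(m-r)}\alpha_1/\alpha_2)$ and the last term $(-q)^m\alpha_2^{-1}e_m\dots e_0$ — I would expand $g_{m-1}\dots g_1g_0$ in the $e$'s and collect, which is where the binomial-type coefficients $1-q^{2(m-r)}\alpha_1/\alpha_2$ emerge from the partial geometric sums $1+\sum q^i g_1\dots g_i\mapsto[m]_q-\dots$ combined with the $\alpha_2^{-1}$ prefactor.

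\textbf{Main obstacle.} The delicate point is the bookkeeping in the telescoping: one must show that at stage $m$ the "extra" $g$-monomials produced when moving $E_{m-1}^{(q,\alpha_1)}$ past $g_{m-1}\dots g_1g_0$ and reducing via \eqref{eq:relquotient2}, \eqref{eq:relquotient32} do not survive — i.e.\ that everything genuinely collapses to the single factor $u_m$ times $E_{m-1}^{(q,\alpha_1)}$, rather than to $u_m$ plus correction terms. Concretely, one needs the analogue of the Lemma used in the proof of Proposition \ref{prop-spanAnk} (tracking how an incoming $g_i$ cascades through the tail $g_{i-1},g_{i-2},\dots$ until it hits $g_0g_1g_0$ and is replaced by $q$), now carried out in $\cC_{k+1,2}$ with the Temperley--Lieb relations, which are more rigid and should make the collapse cleaner; the risk is a proliferation of terms $e_me_{m-1}\dots e_j\dots$ with repeated indices that must be resolved by \eqref{eq:TLrel1} and \eqref{eq:TLrel4}. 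An alternative route that sidesteps some of this is to verify the statement directly over $\bC(q)$ using the semisimple representation theory of $\cC_{k+1,2}^{(k)}$ (Proposition \ref{prop:repC2} and Remark \ref{rem-repfH}): both ${E'}_{k+1}^{(q,\alpha_1)}=0$ and $u_1\cdots u_{k+1}=0$ remove exactly the one-dimensional representation with $S_0\mapsto q^{-2}$, $S_i\mapsto q$, hence generate the same ideal over the field of fractions; then freeness of $\cC_{n,2}$ over $R$ (Theorem \ref{theo:basisC2}) and the fact that $\cC_{n,2}^{(k)}$ is free over $\bC^{(k)}[q^{\pm 1}]$ (Corollary \ref{coro-basis}) let one descend the identity of ideals to the ring, so that the presentation with $u_1\cdots u_{k+1}=0$ is correct over $\bC^{(k)}[q^{\pm1}]$. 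I would present the representation-theoretic argument as the main proof and relegate the explicit expression of $u_{m+1}$ to the direct computation sketched above.
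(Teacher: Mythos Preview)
Your overall strategy---reduce to showing that ${E'}_{k+1}^{(q,\alpha_1)}$ equals an invertible scalar times $u_1\cdots u_{k+1}$ in $\cC_{k+1,2}$, via the recursion for the quasi-idempotent---is the same as the paper's. But you are missing the key shortcut that dissolves your ``main obstacle'': the paper does \emph{not} work with the raw recursion \eqref{eq:Erec} for $E_m^{(q,\alpha_1)}$, but with the already-simplified recursion for $\tilde E_m^{(q,\alpha_1)}$ from Proposition~\ref{prop-idempCn2}, valid in $\cC_{n,2}$ for both $b=1,2$. That proposition gives directly
\[
\tilde E_{m+1}^{(q,\alpha_1)}=\tilde E_m^{(q,\alpha_1)}\Bigl((1-q^2)\sum_{i=2}^{m+1}q^{m+1-i}g_m\cdots g_i+q^m g_m\cdots g_1(1-\alpha_2^{-1}g_0)\Bigr),
\]
and the crucial point is $\tilde E_m^{(q,\alpha_1)}e_i=0$ for $1\le i\le m-1$. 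With this, expanding $g_j=q-e_j$ in a product $g_m\cdots g_i$ collapses immediately: any monomial in the $e$'s with a repeated or out-of-order index contains an $e_j$ with $j<m$ not adjacent to $e_m$, hence is killed on the left, leaving only the single decreasing word $e_me_{m-1}\cdots e_{m+1-r}$ with the obvious coefficient. That is the intermediate identity
\[
\tilde E_m^{(q,\alpha_1)}\,q^{m+1-i}(q-e_m)\cdots(q-e_i)=\tilde E_m^{(q,\alpha_1)}\sum_{r=0}^{m+1-i}(-1)^rq^{2(m+1-i)-r}e_m\cdots e_{m+1-r},
\]
from which $\tilde E_{m+1}^{(q,\alpha_1)}=\tilde E_m^{(q,\alpha_1)}u_{m+1}$ and hence $\tilde E_{k+1}^{(q,\alpha_1)}=u_1\cdots u_{k+1}$ follow by a short rearrangement. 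The scalar bookkeeping is then trivial: Proposition~\ref{prop-idempCn2} gives $E_{k+1}^{(q,\alpha_1)}=q^{k(k+1)/2}[k+1]_q!\,\tilde E_{k+1}^{(q,\alpha_1)}$, so ${E'}_{k+1}^{(q,\alpha_1)}=q^{k(k+1)/2}[k]_q!\,u_1\cdots u_{k+1}$, and $q^{k(k+1)/2}[k]_q!$ is a unit in $\bC^{(k)}[q^{\pm1}]$. Your worry about ``proliferation of terms'' and the lemma from the proof of Proposition~\ref{prop-spanAnk} is therefore unnecessary; the absorption property $\tilde E_m e_i=0$ is all you need.

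Your alternative representation-theoretic route has a genuine gap at the descent step. Showing that ${E'}_{k+1}^{(q,\alpha_1)}$ and $u_1\cdots u_{k+1}$ generate the same ideal over $\bC(q)$ does \emph{not} by itself imply they generate the same ideal over $\bC^{(k)}[q^{\pm1}]$: two elements can differ by a non-unit scalar, or by a non-scalar factor, and still generate the same ideal after inverting everything. Freeness of $\cC_{n,2}$ and of $\cC_{n,2}^{(k)}$ does not help here without knowing in advance that the quotient by $u_1\cdots u_{k+1}$ is also free of the same rank---which is exactly what you are trying to prove. The only clean way to get the result over the ring is the explicit scalar identity above, which is what the paper does.
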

\begin{proof}
According to Definition \ref{def-CnNk}, it remains to describe, if $n\geq k+1$, the following relation of $\cC^{(k)}_{n,2}$ in terms of the generators $e_0,e_1,\dots,e_{n-1}$:
\[\frac{1}{[k+1]_q}E_{k+1}^{(q,\alpha_1)}=0\ .\]
We have done most of the work in Proposition \ref{prop-idempCn2} which gives that, for any $1\leq m<n$, we have in $\cC_{n,2}$
\begin{align} 
&E_{m+1}^{(q,\alpha_1)}=q^{\frac{m(m+1)}{2}}[m+1]_q!\tilde{E}_{m+1}^{(q,\alpha_1)}\,,\label{eq:idemENrenormCn2-b}
\end{align}
where $\tilde{E}_{m+1}^{(q,\alpha_1)}$ is defined recursively by $\tilde{E}_{1}^{(q,\alpha_1)}=(1-\alpha_{2}^{-1}g_0)$ and
\begin{align} 
&\tilde{E}_{m+1}^{(q,\alpha_1)}=\tilde{E}_{m}^{(q,\alpha_1)}\bigl((1-q^2)(1+q g_{m}+\dots +q^{m-1}g_{m}\dots g_2)+q^{m}g_{m}\dots g_1(1-\alpha_{2}^{-1}g_0)\bigr)\ .\label{eq:idemENtildeCn2-b}
\end{align}
We rewrite this recursive definition using \eqref{eq:ei2} together with the properties $\tilde{E}_{m}^{(q,\alpha_1)}e_0=(\alpha_2-\alpha_1)\tilde{E}_{m}^{(q,\alpha_1)}$ and $\tilde{E}_{m}^{(q,\alpha_1)}e_i=0$ for $1\leq i\leq m-1$. As an intermediate step, it is found that, for $1 \leq i \leq m+1$,
\begin{equation}
\tilde{E}_m^{(q,\alpha_1)} q^{m+1-i}(q-e_m)(q-e_{m-1})\dots (q-e_i) = \tilde{E}_m^{(q,\alpha_1)} \sum_{r=0}^{m+1-i} (-1)^{r}q^{2(m+1-i)-r} e_m e_{m-1} \dots e_{m+1-r} \ .  \label{eq:intstep}
\end{equation}
Using \eqref{eq:intstep} in \eqref{eq:idemENtildeCn2-b}, and rearranging sums, one arrives at the result $\tilde{E}_{m+1}^{(q,\alpha_1)} = \tilde{E}_m^{(q,\alpha_1)} u_{m+1}$ with $u_{m+1}$ as in the proposition. Now up to some unnecessary invertible power of $q$, the relation reads:
\[[k]_q!\tilde{E}_{k+1}^{(q,\alpha_1)}=[k]_q!u_1u_2\dots u_{k+1}=0\ .\]
The claim follows from the invertibility of $[k]_q!$ in the ring $\bC^{(k)}[q^{\pm1}]$.
\end{proof}
The elements $u_m$ and $\tilde{E}_{m}^{(q,\alpha_1)}$ appear in \cite{L-L} (up to global factors of $\alpha_2$ and $-q^{-1}$) as generalized Wenzl-Jones factors and generalized Wenzl-Jones projectors respectively for the one-boundary Temperley--Lieb algebra. Now, using the preceding proposition, it can be directly verified that the following mappings give an anti-isomorphism from $\cC_{n,2}^{(k)}$ to the boundary seam algebra (with the notations of \cite{L-L})
\begin{equation}
	e_0 \mapsto q^{k-1}(q-q^{-1})e_n, \qquad e_i \mapsto e_{n-i} \quad 1 \leq i \leq n-1.
\end{equation} 

Let us also mention that a recursive formula, similar to \eqref{eq:Erec2} in $\cH{n}$, holds for $\tilde{E}_{m}^{(q,\alpha_1)}$ when $q$ is not a root of unity (or over $\bC(q)$)
in a specialisation such that $\prod_{r=0}^{m-2}\left(1-q^{2r}\frac{\alpha_1}{\alpha_2}\right) \neq 0$:
\begin{equation}
	\tilde{E}_{m}^{(q,\alpha_1)} = \left(1-q^{2(m-1)}\frac{\alpha_1}{\alpha_2}\right)\tilde{E}_{m-1}^{(q,\alpha_1)} - q\frac{\tilde{E}_{m-1}^{(q,\alpha_1)}e_{m-1}\tilde{E}_{m-1}^{(q,\alpha_1)}}{\prod_{r=0}^{m-3}\left(1-q^{2r}\frac{\alpha_1}{\alpha_2}\right)} \ . 
\end{equation} 

\paragraph{Semisimple representation theory.} In this paragraph, we work over $\bC(q)$ or we assume that $q^2$ is not a root of unity. The representation theory of the boundary seam algebra $\cC_{n,2}^{(k)}$ is easily obtained from the one of the fused Hecke algebra $\cA_n^{(k)}$. Recall from Section \ref{sec:fusedH} that the irreducible representations of $\cA_n^{(k)}$ were indexed by partitions $\lambda\vdash k+n$ with $\lambda_1\geq k$. The quotient giving the boundary seam algebra $\cC_{n,2}^{(k)}$ consists simply in removing all those which have strictly more than two lines.

The beginning of the Bratteli diagram, for example for $k=3$, of the algebras $\cC_{n,2}^{(k)}$ is given below
\begin{center}
 \begin{tikzpicture}[scale=0.3]
\mdiagp{2}{0}{3};\node at (1,-0.5) {$1$};
\draw (2,-1.5) -- (-2.5,-3.5);\draw (3,-1.5) -- (3,-3.5);
\mdiag{-5}{-4}{4};\node at (-6,-4.5) {$1$};\mdiaggp{2}{-4}{3}{1};\node at
(1,-5) {$1$};

\draw (-5,-5.5) -- (-10.5,-8.5);\draw (-3,-5.5) -- (-3,-8.5);\draw
(1.7,-6.3) -- (-1,-8.5);\draw (3,-6.3) -- (3,-8.5);

\mdiag{-13}{-9}{5};\node at (-14,-9.5) {$1$};\mdiagg{-5}{-9}{4}{1};\node
at (-6,-10) {$2$};\mdiaggp{2}{-9}{3}{2};\node at (1,-10)
{$1$};

\draw (-13,-10.5) -- (-19,-14.5);\draw (-10.5,-10.5) --
(-10.5,-14.5);\draw (-5.3,-11.3) -- (-9.5,-14.5);\draw (-3,-11.3) --
(-3,-14.5);
\draw (1.7,-11.3) -- (-1,-14.5);\draw (3,-11.3) -- (3,-14.5);

\node at (-23,-15.5) {$1$};\mdiag{-22}{-15}{6};
\node at (-14,-16) {$3$};\mdiagg{-13}{-15}{5}{1};

\node at (-6,-16) {$3$};\mdiagg{-5}{-15}{4}{2};

\node at (1,-16) {$1$};\mdiaggp{2}{-15}{3}{3};

\draw (-23,-16.5) -- (-29,-20.5);\draw (-19.5,-16.5) --
(-19.5,-20.5);\draw (-13.3,-17.3) -- (-17.5,-20.5);\draw (-11,-17.3) --
(-11,-20.5);
\draw (-5.3,-17.3) -- (-8,-20.5);\draw (-3,-17.3) -- (-3,-20.5);
\draw (1.7,-17.3) -- (-1,-20.5);

\node at (-33,-21.5) {$1$};\mdiag{-32}{-21}{7};
\node at (-23,-22) {$4$};\mdiagg{-22}{-21}{6}{1};

\node at (-14,-22) {$6$};\mdiagg{-13}{-21}{5}{2};

\node at (-6,-22) {$4$};\mdiagg{-5}{-21}{4}{3};

\node at (-38,-0.5) {$n=0$};\node at (-38,-4.5) {$n=1$};\node at
(-38,-9.5) {$n=2$};\node at (-38,-15.5) {$n=3$};
\node at (-38,-21.5) {$n=4$};
\end{tikzpicture}
\end{center}
We can see the special behaviour starting at the value $n=k+1=4$. The irreducible representations, at level $n$, are indexed by a positive integer $h$, which is the number of boxes added in the second row, satisfying $0\leq 2h\leq k+n$. It is easy to see recursively from the branching graph that the dimension of the corresponding irreducible representation is:
\[\binom{n}{h}-\binom{n}{h-k-1}\ ,\]
with the understanding that $\binom{n}{h-k-1}=0$ if $h\leq k$. Note that comparing with \cite{MRR}, our $h$ is their $(n+k-d)/2$. The dimension of the algebra is:
\begin{equation}\label{dimCn2}
\dim C_{n,2}^{(k)}=\binom{2n}{n}-\binom{2n}{n-k-1}\ .
\end{equation}

\paragraph{Definition over $\bC[q^{\pm1}]$.}
We have originally defined $\cC_{n,2}^{(k)}$ over $\bC^{(k)}[q^{\pm1}]$. The presentation put forward in Proposition \ref{prop:boundaryseam} can be used without problem over $\bC[q^{\pm1}]$. In our notation, this means to define the algebra $\cC_{n,2}^{(k)}$ over $\bC[q^{\pm1}]$ as follows.
\begin{defi}\label{def-CnNk-Cq}
Over $\bC[q^{\pm1}]$, the algebra $\cC_{n,2}^{(k)}$ is the specialisation of $\cC_{n,2}$ for $\alpha_1=q^{-2}$ and $\alpha_2=q^{2k}$\,, with the additional defining relation if $n>k$:
\begin{equation}
	\tilde{E}_{k+1}^{(q,\alpha_1)}=0\ , \label{eq:relidem2-22}
\end{equation}  
where the renormalised quasi-idempotent $\tilde{E}^{(q,\alpha_1)}_{k+1}$ was obtained in Proposition \ref{prop-idempCn2} and recalled in \eqref{eq:idemENtildeCn2-b}. 
\end{defi}
With this definition, we can prove that we get an algebra which is free over $\bC[q^{\pm1}]$ with dimension equal to \eqref{dimCn2}. In fact, we may check that the following set of elements is a $\bC[q^{\pm1}]$-basis:
\[
[n_1,m_1][n_2,m_2]\dots [n_r,m_r]\ \ \ \ \ \ \text{with}\ \left\{\begin{array}{l} 0\leq n_1<n_2<\dots<n_r\leq n-1\ \ \text{and}\ \ m_i\leq n_i\ ,\\[0.4em]
0=m_1=\dots=m_s<m_{s+1}<\dots<m_r\ \ \ s<k+1\ .
\end{array}\right.\]
Without the condition $s<k+1$ in the second line, we already know that this set is a spanning set for $\cC_{n,2}$, see \eqref{basis-cC2}. The relation $\tilde{E}_{k+1}^{(q,\alpha_1)}=0$ further allows to rewrite any element $[n_1,0][n_2,0]\dots [n_{k+1},0]$ in terms of elements with fewer $g_0$ (smaller $s$). This works over $\bC[q^{\pm1}]$ since the element we need to rewrite appears with an invertible coefficient in $\tilde{E}_{k+1}^{(q,\alpha_1)}$. We refer to the proof of Proposition \ref{prop-spanAnk} for more details. The above set is of the correct cardinality \cite[Appendix B]{MRR}, that is \eqref{dimCn2}, and thus is a basis over $\bC[q^{\pm1}]$.

\begin{rem}
If we specialise $q$ to a complex number such that $q^{2i}\neq 1$ for $i=1,\dots,k$, of course Definition \ref{def-CnNk-Cq} recovers Definition \ref{def-CnNk}. But now Definition \ref{def-CnNk-Cq} allows to consider the cases where $q^{2i}=1$ for some $i=1,\dots,k$. We note that we differ here from \cite{MRR} where the defining relations, when $q^{2i}\neq 1$, were modified according to the value of $q$ and the dimension of the algebra resultingly depended on $q$.
\end{rem}

%
%
%
%
%
%

\end{document}